 \newtheorem{Lemma}{Lemma}
 \newtheorem{Proposition}[Lemma]{Proposition}
 \newtheorem{Theorem}[Lemma]{Theorem}
 \newtheorem{Conjecture}[Lemma]{Conjecture}
 \newtheorem{Corollary}[Lemma]{Corollary}
 \newcommand{\FF}{\mbox{${\mathcal F}$}} 
  \newcommand{\CC}{\mbox{${\mathcal C}$}}
 \renewcommand{\AA}{\mbox{${\mathcal A}$}}
  \newcommand{\MM}{\mbox{${\mathcal M}$}}
    \newcommand{\ZZ}{\mbox{${\mathcal Z}$}}
 \newcommand{\sfrac}[2]{{\textstyle\frac{#1}{#2}}}
 \newcommand{\Reals}{{\mathbb{R}}}
 \newcommand{\Complex}{{\mathbb{C}}}
 \newcommand{\bC}{{\mathbf C}}
 \newcommand{\bc}{{\mathbf c}}
 \newcommand{\origin}{{\mathbf 0}}
  \newcommand{\bZ}{\mathbf{Z}}
    \newcommand{\eps}{\varepsilon}
  \newcommand{\Ex}{{\mathbb E}}
\renewcommand{\Pr}{{\mathbb P}}
\newcommand{\indic}{1}
\newcommand{\Tcouple}{T_{\mathrm{coal}}}
\newcommand{\Icouple}{I_{\mathrm{coal}}}
\newcommand{\Zcouple}{Z_{\mathrm{coal}}}
\DeclareMathOperator{\diam}{diam}
\DeclareMathOperator{\area}{area}
\DeclareMathOperator{\disc}{disc}
\DeclareMathOperator{\ave}{ave}
\DeclareMathOperator{\Leb}{Leb}
\DeclareMathOperator{\parent}{parent}
\DeclareMathOperator{\ancestor}{ancestor}
\DeclareMathOperator{\descend}{Descend}
 \newcommand{\Oexp}{O_{\scriptscriptstyle exp}}
\begin{document}

\title{Random partitions of the plane via Poissonian coloring, 
and a self-similar process of coalescing planar partitions}

 \author{David J. Aldous
 \thanks{Department of Statistics,
 367 Evans Hall \#\  3860,
 U.C. Berkeley CA 94720;  aldous@stat.berkeley.edu;
  www.stat.berkeley.edu/users/aldous.  Aldous's research supported by
 N.S.F Grant DMS-1504802. }}

 \maketitle

\begin{abstract}
Plant  differently colored points in the plane; then let random points (``Poisson rain") fall, and give each new point the color of the nearest existing point.
Previous investigation and simulations strongly suggest that the colored regions converge (in some sense) to a random partition of the plane.
We prove a weak version of this, showing that normalized empirical measures converge to Lebesgue measures on a random partition into measurable sets.  
Topological properties  remain an open problem.  In the course of the proof, which heavily exploits  time-reversals, we encounter a novel self-similar process of coalescing planar partitions.  
In this process, sets $A(z)$ in the partition are associated with Poisson random points $z$, and the dynamics are as follows.
 Points are  deleted randomly at rate $1$; when $z$ is deleted, its set $A(z)$ is adjoined to the set $A(z^\prime)$ of the nearest other point $z^\prime$.
 \end{abstract}

{\bf MSC 2010 subject classifications:} 60D06, 60G57.

{\bf Key words:} random tessellation, Poisson point process, spatial tree, stochastic coalescence.

\section{Introduction}
\label{sec:int}
The work in this paper has  several motivations. 
We focus below on the most concrete motivation; more broadly, as indicated in 
sections \ref{sec:background}  and \ref{sec:other_coal}, 
we will encounter a kind of spatial analog of well-studied non-spatial models of stochastic fragmentation (in forward time) or stochastic coalescence (in reversed time).
A minor variant of the process below has been considered independently by several researchers 
(see section \ref{sec:background}), but without any published results.

As the ``elementary" variant\footnote{We mean that the model {\em definition} is elementary.}, 
choose $k \ge 2$ distinct points $z_1, \ldots, z_k$ in the unit square, and assign to point $z_i$ the color $i$ from a palette of $k$ colors.  
Take i.i.d. uniform random points $U_{k+1}, U_{k+2}, \ldots$ in the unit square, 
and inductively, for $j \ge k+1$,
\begin{quote}
give point $U_j$  the color of the closest point to $U_j$ 
amongst $U_1,\ldots,U_{j-1}$
\end{quote}
where we interpret $U_i = z_i, 1 \le i \le k$   
(there is a unique closest point a.s.; throughout the paper we omit the ``a.s." qualifier where no subtlety is involved).
This defines a process 
$S_n = (S_n(i), 1 \le i \le k)$, where $S_n(i)$ is the set of color-$i$ 
points amongst $(U_j, 1 \le j \le n)$.  
Simulations (see Figure \ref{fig_1})\footnote{Figures  \ref{fig_1} and  \ref{fig_2} created by  Weijian Han.} 
and intuition strongly suggest that there is (in some sense) an $n \to \infty$ limit which
is a random partition of the square into $k$ colored regions. 
Simulations (see Figure \ref{fig_2}) also suggest that the boundaries between these limit regions should be fractal, in some sense, though 
intuition is less clear here (see section \ref{sec:2heuristics}).

\begin{figure}
\caption{A realization within part of the unit square.  
Line segments indicate parent-child relation.}
\label{fig_1}
\begin{center}
 \includegraphics[width=130mm]{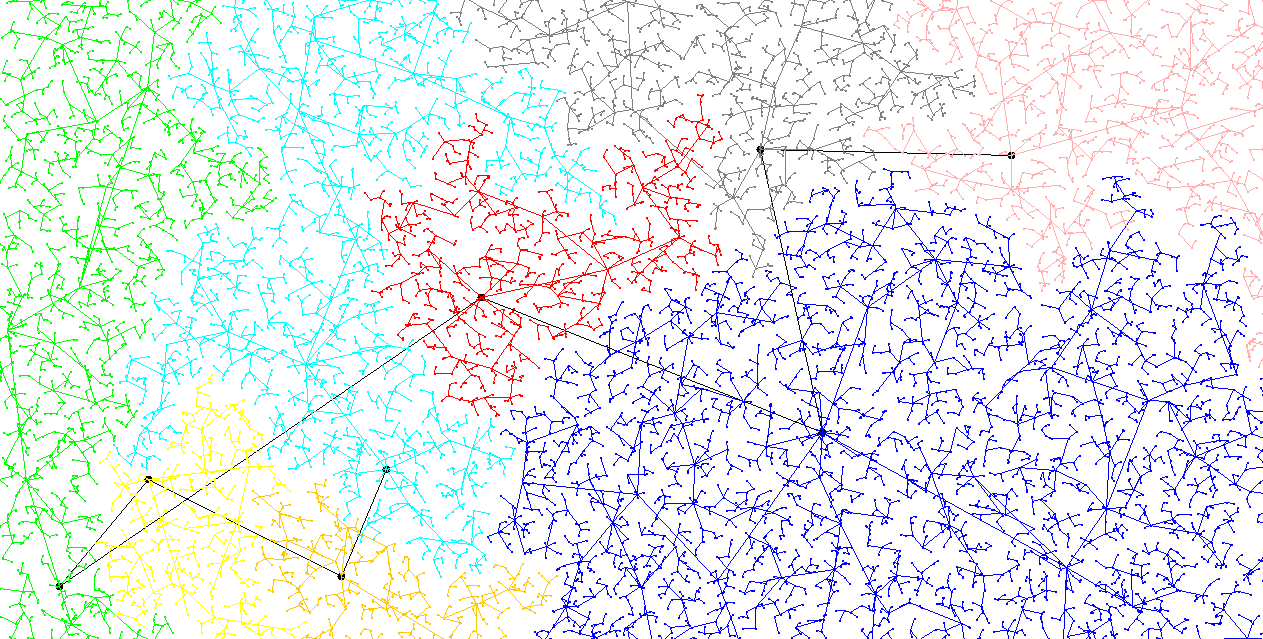}
 \end{center}

\end{figure}

\begin{figure}
\caption{A close-up of the boundary between partition components suggests the boundary is fractal.}
\label{fig_2}
\begin{center}
 \includegraphics[width=130mm]{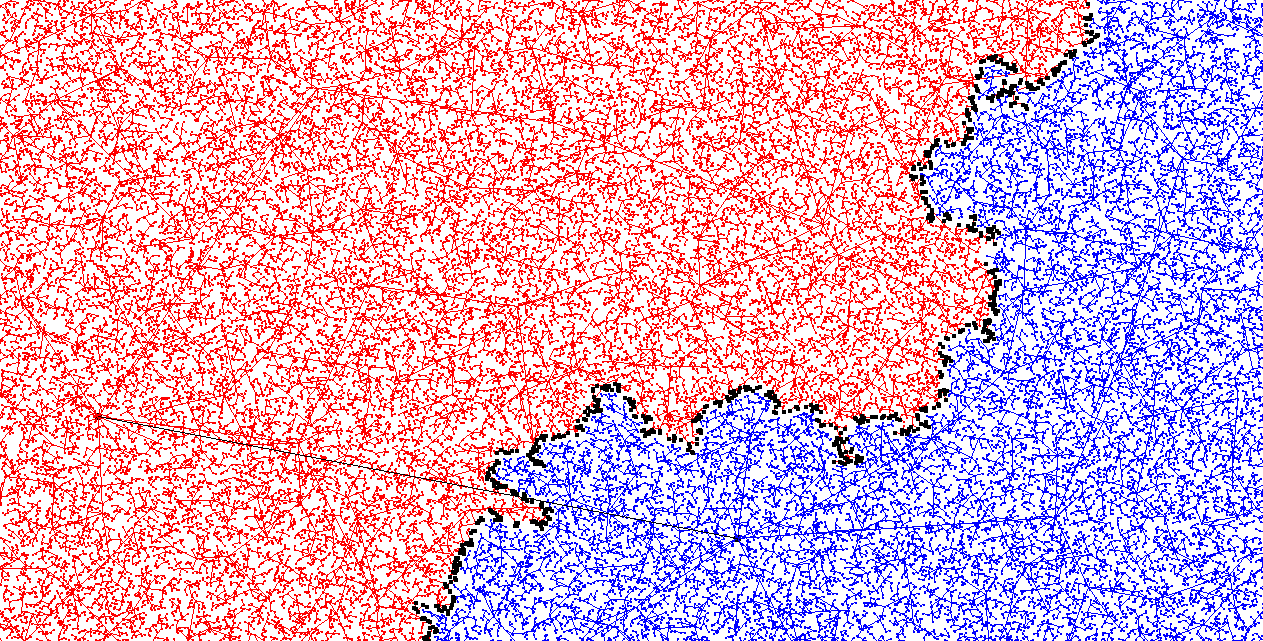}
 \end{center}

\end{figure}

\medskip
What can we actually prove?
For rigorous study, it is more convenient to consider a slightly more sophisticated model. 
On the infinite plane $\Reals^2$ and the infinite time interval $- \infty < t < \infty$ 
there is a space-time  Poisson point process (PPP), which we will envisage as the times and positions of arriving particles, 
such that the set of particles which arrive before time $t$ forms 
a spatial PPP on $\Reals^2$ with intensity $e^t$ per unit area.  
Within this process (more details and notation for what follows in this section will be given next in section \ref{sec:outline}), imagine assigning a different color to each particle present at time $t_1$, 
and then as $t$ increases suppose we color each newly arriving particle by the previous rule, that is by copying the color of the nearest existing particle. 
Intuitively, what we see in the unit square within this model, at large times $t$, must be similar
(up to boundary effects) as in the elementary model with a Poisson($e^{t_1}$) number of initial particles and with $n \approx  e^t$ total particles.

The advantage of this more sophisticated model is that we can exploit the exact self-similarity property of the underlying space-time PPP. 
In particular, by reversing time  the ``line of descent" by which a particle acquires its color from previous particles can be studied.
Moreover, suppose the first intuitive suggestion is true.
That is, after assigning different colors to particles at positions $z$ at time $t_1$, suppose
there is a $t \to \infty$ limit random partition $\AA(t_1)$ of $\Reals^2$ into regions $A(t_1,z)$ occupied by particles with the color of the particle at $z$ at time $t_1$.
This (supposed) partition valued process  $(\AA(t_1),  \infty > t_1 > - \infty)$, has a simple intuitive description in reversed time. 
Given $\AA(t_1)$ and the time-$t_1$ particle positions $z$, we obtain $\AA(t_1 - dt)$ by the rule
\begin{quote}
delete each particle with probability $dt$; for each deleted particle, at position $z$ say, let $z^\prime$ be the nearest other particle
position, and replace $A(t_1,z^\prime)$ by $A(t_1,z^\prime) \cup A(r_1,z)$.
\end{quote}
The purpose of this paper is to prove two intertwined results; 
that the random partition $\AA(t_1)$ does exist as a certain type of limit of the coloring process 
(Theorem \ref{T:limit}); 
and that the resulting reversed-time process $(\AA(t_1) : \infty > t_1 > -\infty)$ is a self-similar version of the process defined by the rule above
(Theorem \ref{T:process}).

\subsection{Notation and more detailed outline}
\label{sec:outline}
Write $\Reals \times \Reals^2$ for the set with elements $(t,z)$, 
interpreted as ``time" $t \in \Reals$ and ``position" $z \in \Reals^2$. 
Write $\bm{\Xi}$ for the Poisson point process on  $\Reals \times \Reals^2$ with mean measure 
$e^{t} dt dz$.
All the random objects considered in this paper will be constructed from $\bm{\Xi}$.
We write a typical ``point" of $\bm{\Xi}$ as $\xi = (t_\xi, z_\xi)$ or $\zeta = (t_\zeta, z_\zeta)$.
We consider $\xi$ as the label for an immortal particle with arrival or ``birth" time $t_\xi$ at position $z_\xi$, and so
\[ \Xi_{\le t} := \{\xi \in \bm{\Xi} : \ t_\xi \le t\} \]
denotes the set of particles which are alive at time $t$. 
Define $\Xi_{<t}$ analogously.
Write
\[ \ZZ_{\le t} = \{ z_\xi: \xi \in \Xi_{\le t} \} \]
for the {\em positions} of the particles at time $t$.
Of course  $\ZZ_{\le t}$ and $\ZZ_{<t}$ are Poisson point processes on $\Reals^2$ with rate $e^t$, that is mean measure $e^t dz$, 
because $\int_{-\infty}^t e^s \ ds = e^t$.  
The self-similarity properties of the PPP -- that $\ZZ_{\le t_1}$ is distributed as a spatial rescaling of $\ZZ_{\le t_2}$ -- will 
extend to self-similarity for  the process $(\AA(t_1) : \infty > t_1 > -\infty)$ 
outlined in the previous section.

To each particle $\xi$ let us assign a {\em parent} particle $\zeta = \parent(\xi)$, defined as the particle in $\Xi_{t_\xi -}$ 
for which the Euclidean distance $|| z_\zeta - z_\xi||$ is minimized.
This defines a (genealogical) 
{\em tree process}.
So for each particle $\xi$ there is an {\em ancestral sequence} of particles, written
$(\parent[i,\xi], i \ge 0)$, defined by $\parent[0,\xi] = \xi$ and then recursively by 
\[ \parent[i+1,\xi] = \parent( \parent[i, \xi]), \ i \ge 0 . \]
The associated {\em line of descent} indicates the ancestor of $\xi$ at each time $t < t_\xi$, that is
\begin{equation}
 \ancestor(t,\xi) = \parent[i,\xi] 
\mbox{ for $i \ge 1$ such that } t_{\parent[i,\xi] } \le t <  t_{\parent[i-1,\xi] }  
\label{def:ancestor}
\end{equation}
where for completeness we define 
\[  \ancestor(t,\xi) = \xi \mbox{ for } t \ge t_\xi  . \]
The first part of the proof (Proposition \ref{P1} in section \ref{sec:part1}) shows
 that for a typical particle $\xi$ present at time $0$, the distance to $\ancestor(-t,\xi)$, the ancestor at time $-t$,
is of order $e^{t/2}$, which is the same order as the distance to the {\em nearest} particle present at time $-t$.
In the second part of the proof (section \ref{sec:joint}) we first consider 
two particles present at time $0$ and distance $r$ apart.  Their lines of descent merge at some random past time $-T_r$, 
and we need an upper bound (Proposition \ref{Pcouple}) on the tail of the distribution of $T_r$.  
The methods in these sections are very concrete -- calculations and bounds involving  Euclidean geometry and spatial Poisson processes -- 
though rather intricate in detail.

The limit result we seek involves descendants (rather than ancestors) of typical particles,
and we set up notation as follows.\footnote{For ancestor-descendant pairs we systematically write $\zeta$ for the ancestor and $\xi$ for the descendant.}  
For $t_1 \le t_2$ and $\zeta \in \Xi_{\le t_1}$ define 
\begin{equation} 
\descend(t_1,t_2,\zeta):= 
\{\xi \in \Xi_{\le t_2}: \ \ancestor(t_1, \xi) = \zeta\} .
\label{def:descend}
\end{equation}
This is the set of particles born before $t_2$ whose time-$t_1$ ancestor in the line of descent was $\zeta$. 
In the coloring story, this is ``the set of particles at time $t_2$ which have inherited the same color 
as $\zeta$, if we gave all the particles at $t_1$ different colors". 
Then, still for $t_1  \le t_2$ and $\zeta \in \Xi_{\le t_1}$, define 
\begin{eqnarray}
 \mbox{ $\mu_{t_1,t_2,\zeta}$ 
is the measure $\mu$ putting weight $e^{-t_2}$} \nonumber\\
\mbox{  on the position of each particle in $\descend(t_1,t_2,\zeta)$.} \label{def:mu}
\end{eqnarray}
So $\mu_{t_1,t_2,\zeta}$ is a random  element of the space  
$\MM(\Reals^2)$ of finite measures on $\Reals^2$, equipped with the usual topology of weak convergence. 
To obtain the limit theorem  we first show (Proposition \ref{P:coupledist} in section \ref{sec:MPPs})   that there exist $t_2 \to \infty$ limits in probability (as $\MM(\Reals^2)$-valued random variables); that is, there exist random measures  $\{ \mu_{t_1,\infty,\zeta}:  \    \zeta \in  \Xi_{\le t_1}  \} $ such that
\begin{equation}
 \mu_{t_1,t_2,\zeta} \to \mu_{t_1,\infty,\zeta} \mbox{ in probability as } t_2 \to \infty, \quad (\forall \zeta \in  \Xi_{\le t_1} ) . 
 \label{mtt12}
 \end{equation}
 The proof essentially relies on Proposition \ref{P1} and self-similarity.
 We then use Proposition  \ref{Pcouple} to show that a limit $ \mu_{t_1,\infty,\zeta}$ is in fact Lebesgue measure restricted to some random set 
 $A(t_1,\zeta)$, implying that the collection $\{A(t_1,\zeta): \ \zeta \in \Xi_{\le t_1} \}$ is necessarily  a partition of $\Reals^2$.
 
 For fixed $t$ we can regard 
\[ \bZ^{(t)} = 
\{ (z_\xi, A(t,\xi)): \ \xi \in  \Xi_{\le t}  \}
\]
as a marked point process.
As $t$ increases, the process $(\bZ^{(t)}, - \infty < t < \infty)$ evolves in a way one can describe qualitatively:
\begin{quote}
new points arrive randomly at rate $e^t$ per unit area per unit time; 
when a point $\xi$ arrives at time $t$, the region $A(t,\zeta)$ associated with the closest existing point $\zeta$ is split into 
two regions $A(t +dt, \zeta)$ and $A(t +dt, \xi)$.
\end{quote}
But the probability distribution over possible splits depends on $\bZ^{(t)}$ in some complicated way which we
are unable to describe explicitly.
 
However, the key feature of this process is that as $t$ {\em decreases} the regions merge according to the simple rule stated earlier. 
To summarize:
\begin{Theorem}
\label{T:process}
The space-time PPP $\{ (t_\xi, z_\xi): \xi \in \bm{\Xi} \}$ 
can be extended to a process 
$\{ (t_\xi, z_\xi, A(t, \xi), t \ge t_\xi) : \xi \in \bm{\Xi} \}$ 
with the following properties. \\
(a) For each $- \infty < t < \infty$ the collection 
$\{ A(t,\xi) : \xi \in \Xi_{\le t} \}$ 
is a random  partition of $\Reals^2$ into measurable sets. \\
(b) The distribution  of the entire time-varying process 
$\{ (t_\xi, z_\xi, A(t, \xi), t \ge t_\xi) : \xi \in \bm{\Xi} \}$ 
is invariant under the action of the Euclidean group on $\Reals^2$. \\
(c) The process whose state at time $t$ is 
$\{ (z_\xi, A(t, \xi)) : \xi \in \Xi_{\le t} \}$ 
evolves in reversed time according to the rule:
\begin{quote}
during $[t, t - dt]$, for each $\xi \in \Xi_{\le t}$  delete $\xi$ 
(that is, remove the entry  $(z_\xi, A(t, \xi))$ )
 with probability $dt$; 
for each deleted particle $\xi$, let $\zeta$ be the nearest other particle,
and set  $A(t - dt, \zeta) = A(t, \zeta) \cup A(t,\xi)$.
\end{quote}
(d) The action of the scaling map $z \to e^{-t/2}z$ on $\Reals^2$ that takes the distribution 
of $\ZZ_{\le 0}$ to the distribution of $\ZZ_{\le t}$ also takes the distribution of 
$\{ (z_\xi, A(0, \xi)): \xi \in \Xi_{\le 0} \}$ 
to the distribution of
$\{ (z_\xi, A(t, \xi)) : \xi \in \Xi_{\le t} \}$.
\end{Theorem}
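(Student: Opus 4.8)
The plan is to treat Theorem \ref{T:process} as an assembly of the analytic results already established, together with two structural observations. The sets $A(t,\xi)$ are supplied by Theorem \ref{T:limit} via Propositions \ref{P:coupledist} and \ref{Pcouple}: for each $t$ and each $\zeta\in\Xi_{\le t}$ one has $\mu_{t,\infty,\zeta}=\Leb|_{A(t,\zeta)}$, and I would take these as the \emph{definition} of the extended process $\{(t_\xi,z_\xi,A(t,\xi),t\ge t_\xi):\xi\in\bm{\Xi}\}$. Part (a) is then essentially a restatement of the limit theorem: since the descendant sets $\descend(t,t_2,\zeta)$, $\zeta\in\Xi_{\le t}$, partition $\Xi_{\le t_2}$, the sum $\sum_\zeta \mu_{t,t_2,\zeta}$ equals $e^{-t_2}$ times the counting measure of $\ZZ_{\le t_2}$, whose $t_2\to\infty$ limit is $\Leb$ by the law of large numbers for the PPP; passing to the limit gives $\sum_\zeta \Leb|_{A(t,\zeta)}=\Leb$, forcing the sets $A(t,\zeta)$ to cover $\Reals^2$ and to be pairwise disjoint up to null sets.

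For (b) and (d) I would exploit that every object in the construction is a deterministic equivariant functional of $\bm{\Xi}$. Because $\parent$, $\ancestor$, $\descend$, and hence each $\mu_{t_1,t_2,\zeta}$, are defined purely through the time order and the Euclidean distances $\|z_\zeta-z_\xi\|$, the whole map $\bm{\Xi}\mapsto\{(t_\xi,z_\xi,A(t,\xi))\}$ commutes with any isometry $g$ acting by $(t,z)\mapsto(t,gz)$. The mean measure $e^t\,dt\,dz$ is invariant under such $g$, so $g\bm{\Xi}\ed\bm{\Xi}$, and equivariance gives (b). For (d) the relevant symmetry is the space-time scaling $\sigma_t:(s,z)\mapsto(s+t,e^{-t/2}z)$, which also preserves $e^t\,dt\,dz$ (the Jacobian $e^t$ from $z\mapsto e^{t/2}z$ exactly cancels the factor $e^{-t}$ from the time shift); since $\sigma_t$ rescales all interpoint distances by the common factor $e^{-t/2}$ it preserves the nearest-neighbour structure, so $A(t,\sigma_t\zeta)=e^{-t/2}A(0,\zeta)$, and $\sigma_t\bm{\Xi}\ed\bm{\Xi}$ yields the stated identity of distributions.

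The heart of the theorem is (c), and here the key is a single additivity identity. Fix a particle $\xi$ with birth time $t_\xi=t$ and parent $\zeta=\parent(\xi)$. For $s<t$ close to $t$ and any $\eta\in\Xi_{\le t_2}$ one checks that $\ancestor(s,\eta)=\zeta$ precisely when $\ancestor(t,\eta)\in\{\zeta,\xi\}$ (descendants of $\xi$ have ancestor $\parent(\xi)=\zeta$ just before $t$, as $\xi$ is not yet born), so $\descend(s,t_2,\zeta)=\descend(t,t_2,\zeta)\cup\descend(t,t_2,\xi)$ and therefore $\mu_{s,t_2,\zeta}=\mu_{t,t_2,\zeta}+\mu_{t,t_2,\xi}$; letting $t_2\to\infty$ gives $\Leb|_{A(t{-},\zeta)}=\Leb|_{A(t,\zeta)}+\Leb|_{A(t,\xi)}$, i.e. $A(t{-},\zeta)=A(t,\zeta)\cup A(t,\xi)$ disjointly up to null sets. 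This is exactly the reverse-time merge, and $\zeta$ is by definition the nearest other particle to $\xi$ at its birth. It remains to identify the deletion mechanism: conditionally on the positions $\ZZ_{\le t}$, the birth times of the points present at time $t$ are independent with density proportional to $e^s$ on $(-\infty,t]$, so for each such particle the amount of backward time until one passes its birth is $\mathrm{Exp}(1)$, independently across particles. Thus as $t$ decreases each present particle is removed at rate $1$, and at its removal the identity above merges its set into that of its nearest neighbour, which is the rule in (c).

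The main obstacle, I expect, is not any single step in isolation but the passage from the \emph{fixed-$t$, in-probability} statements of Propositions \ref{P:coupledist}--\ref{Pcouple} to one coherent process defined simultaneously for all $t$ and all $\xi$, with the merges occurring almost surely at the birth times. Making this rigorous requires (i) choosing a jointly measurable version of $\{A(t,\xi)\}$ and verifying the additivity identity almost surely at every birth event, which means controlling the interchange of the limits $s\uparrow t$ and $t_2\to\infty$ and the attendant Lebesgue-null ambiguities, and (ii) localizing to bounded windows so that ``delete each of infinitely many particles with probability $dt$'' is given honest meaning, for instance through a generator or martingale description of the restricted coalescent. Once the additivity identity holds almost surely and the deletion times are identified as independent rate-$1$ clocks, part (c)—and with it the coherence underlying (a), (b), and (d)—follows.
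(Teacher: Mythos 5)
Your proposal is correct and follows essentially the same route as the paper: the sets come from Proposition \ref{P:partition} (via Propositions \ref{P:coupledist} and \ref{Pcouple}), parts (b) and (d) are dispatched by equivariance and self-similarity of $\bm{\Xi}$, and part (c) is obtained by observing that the finite-$t_2$ marked process $\{(z_\xi,\mu_{t,t_2,\xi})\}$ evolves in reversed time as a coalescing-measures process via the thinning property (Lemma \ref{L:rev}) together with the additivity of descendant measures at birth events, and then passing to the $t_2\to\infty$ limit. The technical caveats you raise about interchanging limits and giving rigorous meaning to the infinitesimal deletion rule are also present, and treated at the same informal level, in the paper's own argument.
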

The earlier statement (\ref{mtt12}) can now be rephrased as follows, where  
we define $\mu_{t,t_2,\xi}$ as at (\ref{def:mu}) and consider it as an $\MM(\Reals^2)$-valued random variable. 
Write  $\Lambda_A$ for Lebesgue measure restricted to $A \subset \Reals^2$.
\begin{Theorem}
\label{T:limit}
For each $\xi \in \Xi_{\le t}$ we have 
\[ \mu_{t,t_2,\xi} \to \Lambda_{A(t,\xi)} \mbox{ in probability as } t_2 \to \infty \]
where the limit random sets $\{A(t, \xi) : \xi \in \Xi_{\le t} \}$ 
comprise a process with the properties stated in Theorem \ref{T:process}.
\end{Theorem}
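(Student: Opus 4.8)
The plan is to assemble the three ingredients already prepared: the existence of the limiting measures furnished by Proposition \ref{P:coupledist}, the coalescence tail bound of Proposition \ref{Pcouple}, and the structural statements collected in Theorem \ref{T:process}. By Proposition \ref{P:coupledist} the limits $\mu_{t,\infty,\xi} := \lim_{t_2 \to \infty} \mu_{t,t_2,\xi}$ already exist in probability as $\MM(\Reals^2)$-valued random variables, so the genuine content of Theorem \ref{T:limit} is to identify each such limit as $\Lambda_{A(t,\xi)}$ for a measurable set $A(t,\xi)$, and then to verify that the family $\{A(t,\xi) : \xi \in \Xi_{\le t}\}$ inherits properties (a)--(d) of Theorem \ref{T:process}.

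First I would introduce a \emph{color field} on $\Reals^2$. For $x \in \Reals^2$ and $t_2 \ge t$ let $\xi_{t_2}(x)$ be the particle of $\Xi_{\le t_2}$ nearest to $x$, and set $C_{t_2}(x) := \ancestor(t, \xi_{t_2}(x)) \in \Xi_{\le t}$, the time-$t$ color of that particle. The claim is that for Lebesgue-a.e. $x$ the labels $C_{t_2}(x)$ stabilize as $t_2 \to \infty$ to a limit $C(x)$. This is exactly where Proposition \ref{Pcouple} enters: two particles born after time $t$ and at distance $r$ apart share the same time-$t$ ancestor once their two lines of descent have coalesced, and the tail bound on the coalescence time $T_r$ shows that, as $r \to 0$, coalescence occurs above time $t$ with probability tending to $1$. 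Since $\xi_{t_2}(x) \to x$ by the Poisson law of large numbers (the nearest particle sits at distance of order $e^{-t_2/2}$), a Borel--Cantelli argument along a geometrically spaced sequence of times $t_2$ should upgrade this into a.e. stabilization. Define $A(t,\xi) := \{x : C(x) = \xi\}$.

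Next I would show $\mu_{t,\infty,\xi} = \Lambda_{A(t,\xi)}$. For a bounded Borel set $B$, the superposition $\sum_{\xi \in \Xi_{\le t}} \mu_{t,t_2,\xi}$ is simply the empirical measure assigning mass $e^{-t_2}$ to each point of $\ZZ_{\le t_2}$, which converges to $\Leb$ by the law of large numbers; and the $\xi$-component of that mass in $B$ is the $e^{-t_2}$-weighted count of particles in $B$ whose time-$t$ ancestor equals $\xi$. Since almost every such particle lies near a point $x$ with $C(x) = \xi$, a Fubini argument combined with the a.e. stabilization of the color field gives $\mu_{t,t_2,\xi}(B) \to \int_B \indic\{C(x) = \xi\}\, dx = \Lambda_{A(t,\xi)}(B)$. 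Because distinct time-$t$ ancestors yield disjoint descendant sets and every point receives exactly one color, the sets $\{A(t,\xi)\}$ are disjoint, cover $\Reals^2$ up to a Lebesgue-null set, and hence form a partition.

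Finally, properties (a)--(d) of Theorem \ref{T:process} follow from this construction together with the exact self-similarity and Euclidean invariance of $\bm{\Xi}$: the invariance in (b) and the scaling in (d) are immediate from the corresponding symmetries of the underlying PPP and from the fact that $C(\cdot)$, and hence each $A(t,\xi)$, is defined intrinsically from $\bm{\Xi}$, while the reversed-time merging rule in (c) is read off by observing that, as $t$ decreases past a particle's birth time, that particle's region is absorbed into its parent's. I expect the main obstacle to be the a.e. stabilization of $C_{t_2}(x)$: converting the \emph{probabilistic} coalescence estimate of Proposition \ref{Pcouple} into an \emph{almost-everywhere} statement strong enough to pass to the $t_2 \to \infty$ limit simultaneously over a set of full Lebesgue measure is delicate, since the nearest-particle labels $\xi_{t_2}(x)$ change with $t_2$ and one must control coalescence across a whole range of scales at once.
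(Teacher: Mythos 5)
Your overall architecture matches the paper's at the top level: Proposition \ref{P:coupledist} supplies existence of the limits $\mu_{t,\infty,\xi}$, and Proposition \ref{Pcouple} (via Corollary \ref{C:3}) is indeed the tool for identifying each limit as Lebesgue measure on a set. But the identification step is where you diverge from the paper, and where your sketch has genuine gaps. The paper never introduces a nearest-particle color field. Instead it writes $\mu_{0,\infty,\zeta}=f_\zeta\Lambda$ (possible because the limit measures sum to $\Lambda$), bounds the expected number of ordered pairs $\xi_1,\xi_2\in\Xi_{\le t}$ at distance $\le\delta$ with different time-$0$ ancestors using the Palm probability $p(0,t,t,z_1,z_2)\le K\delta^{2\rho}$, identifies the $t\to\infty$ limit of that pair count with $\Ex\int_\square\int_\square \indic_{\{||z_1-z_2||\le\delta\}}\bigl(1-\sum_\zeta f_\zeta(z_1)f_\zeta(z_2)\bigr)\,dz_1dz_2$, and then uses $1-\sum_i a_ib_i\ge 1-\max_i a_i$ and $\delta\downarrow 0$ to conclude $\max_\zeta f_\zeta(z)=1$ a.e. This is a pure two-point-correlation argument on the limiting densities, with no reference to Voronoi cells.

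Your route has two concrete problems. First, the a.e.\ stabilization of $C_{t_2}(x)$: Proposition \ref{Pcouple} is stated under the Palm conditioning that particles sit at two prescribed positions, whereas you additionally condition on a particle being the one \emph{nearest} to $x$ in $\Xi_{\le t_2}$ --- an event that is correlated with the genealogy you are trying to control, so the tail bound does not apply as invoked. This is repairable by a first-moment union bound over all pairs of particles in a shrinking disc around $x$ (essentially the paper's bound on $\Ex M(t_0,t_1,t_2)$ in the proof of Lemma \ref{t00}), but you have not supplied that step. Second, and more seriously, the assertion that $\mu_{t,t_2,\xi}(B)\to\int_B\indic\{C(x)=\xi\}\,dx$ ``by a Fubini argument'' conceals the real difficulty: $\int_B\indic\{C_{t_2}(x)=\xi\}\,dx$ is the Lebesgue measure of a union of Voronoi cells, while $\mu_{t,t_2,\xi}(B)$ is $e^{-t_2}$ times a particle count; their difference is a sum of signed Voronoi-area fluctuations restricted to the random descendant set, and a term-by-term bound gives only $O(\area(B))$, not $o(1)$. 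Showing that these fluctuations do not correlate with color is essentially the ``Voronoi area is almost a martingale'' problem that the paper explicitly states it cannot control (section \ref{sec:background}). To close this you would need the stronger statement that \emph{almost all} particles in a small disc share one time-$t$ ancestor --- which is exactly what the paper's pair-counting argument delivers directly, bypassing the color field altogether. Your treatment of properties (a)--(d), via the thinning property and the self-similarity of $\bm{\Xi}$ together with the $T\to\infty$ limit of the coalescing-measures dynamics, is consistent with the paper's.
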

Note this implies that the limit random sets here and in Theorem \ref{T:process} are $\sigma(\bm{\Xi})$-measurable.

Theorem \ref{T:limit} is a formalization of the  
 ``limit colored regions exist" result described in the opening section, but this particular formalization is mathematically weak in two senses.
Our formalization via weak convergence of empirical measures means, in the original ``elementary" version, that we are ignoring 
positions of $o(n)$ size subsets of the $n$ particles.  
Second, our proof gives no information about topological properties of the 
regions
$A(t,\xi)$, only that they are measurable.
In fact because the regions $A(t,\xi)$ are identified via the Lebesgue measure they support, they are only well-defined 
up to Lebesgue-null sets of $\Reals^2$.  So, for instance, the natural question 
``is $\xi$ an element of  $A(t,\xi)$?" is not well-posed.
But it is natural to  guess that the following is true.
\begin{Conjecture}
\label{C:1}
For each $t$ one can identify the regions $\{A(t,\xi): \ \xi \in \Xi_{\le t} \}$ so that the topological boundary of each region has Lebesgue measure zero.
\end{Conjecture}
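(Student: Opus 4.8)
The plan is to exhibit a canonical choice of representatives and then to reduce the measure-zero assertion, via stationarity and self-similarity, to a single scaling estimate powered by the coalescence bound of Proposition \ref{Pcouple}. The expected crux is a passage from pairwise to uniform control, which is presumably why the statement is only conjectured.

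\textbf{Choosing the representatives.} First I would assign a color to every point of $\Reals^2$, not just to particles. For $t_2 > t$ let $\eta_{t_2}(w)$ be the particle of $\Xi_{\le t_2}$ nearest to $w$, and set $X_{t_2}(w) = \ancestor(t,\eta_{t_2}(w)) \in \Xi_{\le t}$. The successive nearest particles all lie within $O(e^{-t_2/2})$ of $w$, hence within $O(e^{-t_2/2})$ of each other, so Proposition \ref{Pcouple} (applied at this scale) forces their time-$t$ ancestors to agree with high probability; a Borel--Cantelli argument along a sequence $t_2 \to \infty$ should then show that for each fixed $w$ the color $X_{t_2}(w)$ stabilizes a.s.\ to a limit $X(w)$, in parallel with Proposition \ref{P:coupledist}. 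Define $\tilde A(t,\zeta) := \{w : X(w) = \zeta\}$; by construction every $w$ lies in exactly one $\tilde A(t,\zeta)$, and the induced measures should coincide with the $\mu_{t,\infty,\zeta}$ of Theorem \ref{T:limit}, so these are legitimate representatives for the regions $A(t,\zeta)$.

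\textbf{Reduction to a single point.} Put $\partial := \Reals^2 \setminus \bigcup_{\zeta}\interior \tilde A(t,\zeta)$. Each topological boundary $\partial \tilde A(t,\zeta)$ is contained in $\partial$, and there are only countably many $\zeta$, so it suffices to prove $\Leb(\partial)=0$ a.s. A point $w$ lies in $\partial$ iff every ball $B(w,\delta)$ meets at least two regions. By the Euclidean invariance of Theorem \ref{T:process}(b), $\Pr(w\in\partial)$ does not depend on $w$, so Fubini gives $\Ex \Leb(\partial \cap B(0,R)) = \pi R^2\, \Pr(0\in\partial)$, and the conjecture reduces to proving $\Pr(0\in\partial)=0$.

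\textbf{Self-similar renormalization.} Let $E_\delta$ be the event that $B(0,\delta)$ meets two regions of $\AA(t)$. Then $E_\delta$ decreases to $\{0\in\partial\}$ as $\delta \downarrow 0$, so $\Pr(0\in\partial)=\lim_{\delta\to 0}\Pr(E_\delta)$. The scaling of Theorem \ref{T:process}(d) shows $\AA(t)$ is distributed as $\delta$ times $\AA(t'')$ with $t'' = t + 2\log\delta$; rescaling the ball by $\delta^{-1}$ converts $E_\delta$ into the event that the unit ball meets two regions of $\AA(t'')$. Hence $\Pr(0\in\partial) = \lim_{t''\to-\infty}\Pr\big(B(0,1)\text{ meets two regions of }\AA(t'')\big)$, and it remains to show this limit vanishes. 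Intuitively it should: at coloring time $t'' \to -\infty$ the colored particles are vanishingly sparse (density $e^{t''}$), the ancestor of any point sits at distance of order $e^{-t''/2}\to\infty$ (Proposition \ref{P1}), and the whole unit ball should lie in a single huge region with probability tending to $1$.

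\textbf{The main obstacle.} The engine for this last limit is again Proposition \ref{Pcouple}: two points at distance $\le 2$ have the same color at coloring time $t''$ exactly when their lines of descent coalesce before $t''$, and the tail bound makes this probability tend to $1$ as $t''\to-\infty$ for any \emph{fixed} pair. The hard part is that monochromaticity of $B(0,1)$ is a statement about all, uncountably many, pairs at once, and the pairwise bound does not obviously upgrade to this uniform claim; a priori infinitely many thin tendrils of foreign regions could intrude into the ball, and since the boundary is expected to be fractal (Figure \ref{fig_2}) one cannot dispose of them by a first-moment estimate on boundary length, which is presumably infinite. I expect this to be the decisive gap. The natural remedy is to prove that the lines of descent, as planar curves, do not cross, so that the genealogy is a planar tree and the set of points sharing the origin's color is connected; monochromaticity of the ball would then reduce to a one-dimensional statement on its bounding circle, tractable via Proposition \ref{Pcouple}. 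Failing that, one would need a direct quantitative bound on the expected number of distinct regions meeting $B(0,1)$. Establishing either the non-crossing property or such a bound for this model is exactly what is missing.
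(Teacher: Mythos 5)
You are attempting to prove Conjecture \ref{C:1}, which the paper does not prove: it is stated as a conjecture, and the final subsection of section \ref{sec:2heuristics} says explicitly that proving it ``seems to require some new argument.'' So there is no paper proof to compare against; the relevant comparison is with the paper's own discussion of why the natural route fails. Your attempt is a reasonable organization of that natural route --- canonical representatives, reduction by stationarity and Fubini to $\Pr(0\in\partial)=0$, and a scaling step reducing everything to monochromaticity of a unit ball at early coloring times --- and you correctly identify in your last paragraph that it stalls at the upgrade from the pairwise coalescence bound of Proposition \ref{Pcouple} to a simultaneous statement about uncountably many pairs. The paper makes the same point, but more decisively: it exhibits a deterministic counterexample ($A=\cup_i \disc(z_i,r_i)$ with $(z_i)$ dense and $r_i\downarrow 0$ very fast) showing that decay of the two-point disagreement probability $\rho(r)\to 0$ does \emph{not} imply that the topological boundary can be made Lebesgue-null after modification on a null set. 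So no soft argument of the shape you outline can close the gap; the missing ingredient is genuinely geometric, not probabilistic bookkeeping.

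Two further remarks. First, your construction of the representatives $\tilde A(t,\zeta)$ already contains a gap of the same nature: Borel--Cantelli gives stabilization of $X_{t_2}(w)$ for each \emph{fixed} $w$ almost surely, hence for Lebesgue-a.e.\ $w$ on an a.s.\ event, but not for all $w$ simultaneously; thus $X(w)$, and with it $\tilde A(t,\zeta)$ and your set $\partial$, is only defined up to a Lebesgue-null set --- which is precisely the ambiguity the conjecture asks you to resolve. Second, your proposed remedy of showing that lines of descent do not cross, so that each color class is connected, runs against the paper's own evidence: Figure \ref{fig_1} shows that edges of the genealogical tree do cross, and section \ref{sec:tree_sense} argues that the limit regions are probably \emph{not} connected. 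The more plausible missing ingredient is a quantitative bound on the expected number of distinct regions meeting a unit ball, equivalently on boundary length (the quantity $\ell_n$ of \cite{jordan-wade}); this is exactly the open ``fractal dimension'' question of section \ref{sec:2heuristics}, and the paper notes that resolving it would imply Conjecture \ref{C:1}.
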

If true, we could rephrase the question above as the well-posed question 
``is each $\xi$ in the interior of  $A(t,\xi)$?", 
and we conjecture the answer is Yes. 
More interestingly, assuming Conjecture \ref{C:1} is true, it is natural to conjecture that the boundaries have some (suitably defined) 
non-random fractal dimension $1 \le d < 2$, and section \ref{sec:2heuristics}  contains heuristic discussion.
Further related remarks are in the next section.
Finally, one might expect the regions to be {\em connected} sets, but this seems incorrect --  
see section \ref{sec:notfractal}.

\subsection{Background and analogous models}
\label{sec:background}
To quote the unpublished notes \cite{jordan-wade}
\begin{quote}
The [elementary] model is described in [\cite{penrose-wade},  sec. 7.6.8, pp. 270--271], 
although we are not sure of its origins: [we]  probably first learned of the problem from Mathew Penrose in about 2003, 
while Ben Hambly [personal communication] recalls that the same problem arose elsewhere at about the same time.
\end{quote}
The context of that line of work was on-line algorithms in computational and stochastic geometry.
Separately the present author learned [personal communication] that the elementary model has been considered by
Ohad Feldheim as a spatial analog of the P\'{o}lya  urn process. 

The approach in  \cite{jordan-wade} to  the elementary model 
 is to identify colored regions in the unit square as Voronoi regions, that is the set of points for which the nearest particle has a given color.
 Then via the Hausdorff metric on closed sets, it makes sense to ask whether our notion of convergence of empirical measures can be strengthened to include convergence of Voronoi  regions. 
In our language and model, this could only  be true if Conjecture \ref{C:1} is true. 
 Arguments in  \cite{jordan-wade} focus on the  length  $\ell_n$ of the boundary between the two regions (for two colors and $n$ particles in the unit square).
 Using arguments with a more geometric flavor than ours, they 
  raise and discuss  the question of whether $\ell_n = O(n^{d/2})$ for some $d<2$.  
  This mirrors our ``fractal dimension" question, and indeed would imply that Conjecture \ref{C:1} is true.   
  The arguments in this paper make surprisingly little use of the ``local geometry" of the PPP, so
 one can hope that our results might be combined with  more geometric arguments to make further progress.
 
 Note also that, intuitively, the area of the Voronoi region of a given color should behave almost as a martingale, because a 
 new particle near the boundary seems equally likely to make the area larger or smaller. 
 If one could bound the martingale approximation well enough to establish a.s. convergence of such areas, the results of this paper would follow rather trivially.
 But doing so seems to require detailed knowledge of the geometry of the boundary.

 The author's own interest in the model arose in the context of a 
{\em  scale-invariant random spatial network} (SIRSN)
\cite{MR3164768,MR3082274}, studied as abstractions of road networks.  
A general conjecture is that any network built dynamically from randomly-arriving Poisson points by means of edges (now line segments in the plane)  being created to attach an arriving point to the existing network  by a ``scale-invariant rule" (that is, a rule which uses only relative distances, not absolute distances) 
should in the limit define a SIRSN.  
Of course the rule in our model ``create an edge from the newly-arrived point to the closest existing point" is about the simplest scale-invariant rule one  can imagine.\footnote{Unfortunately the tree-like structure of this model implies it does not satisfy the   
requirement of a SIRSN that mean route lengths be finite.}   
The fact that this ``simplest case" is hard to analyze suggests that  the general conjecture is very challenging.

There is extensive literature on stochastic fragmentation and coalescence models in the non-geometric ``mean-field" setting 
\cite{me-coal,bertoin}.
There is also substantial literature (see e.g. \cite{kendall_book} Chapter 9) concerning random partitions of the plane (tessellations, tilings etc).
But the combination of these themes, that is  Markovian processes of refining or coarsening partitions in the plane,
has been considered only in special refining models \cite{cowan} and in variants of the STIT model \cite{schreiber,thale}.  
The coalescing partitions process in  Theorem \ref{T:process}  is perhaps the only known self-similar Markovian process of pairwise merging partitions of $\Reals^2$ with explicit rates.
See section \ref{sec:other_coal} for further brief comments.

\section{A bound on ancestor displacement}
\label{sec:part1}
\subsection{Compactness for the marked point process}
Our first objective is to obtain a concrete bound, Proposition \ref{P1}, on the distance between the position $z_\xi$ of a particle $\xi$ (present at time $0$) and the 
position $z_{\ancestor(-t,\xi)}$ of its ancestor at time $-t$.

Some notation:
\begin{itemize}
\item $\origin$ is the origin in $\Reals^2$.
\item $|| x - y ||$ denotes Euclidean distance in $\Reals^2$.
\item $\disc(z,r)$ is the closed disc with center $z$ and radius $r$.
\item For  measurable $B \subset \Reals^2$ write $\area(B)$ for its area ($2$-dimensional Lebesgue measure) and 
$\diam(B) = \sup_{x,y \in B}  || x - y ||$ for its diameter.
\end{itemize}
\begin{Proposition}
\label{P1}
There exists a function $G(r) \downarrow  0$ as $r \uparrow  \infty$ such that,
for all $z \in \Reals^2$ and all $t > 0$, conditional on $\Xi_{\le 0}$ having a particle $\xi$ with  $z_\xi = z$ and $t_\xi > -t$ we have 
\[ G_t(r) := 
\Pr (||
z_{\ancestor(-t,\xi)} - z_\xi|| > r e^{t/2} ) \le G(r) , \ 0 < r < \infty . \]
Moreover $\int_0^\infty r G(r) dr < \infty$.
\end{Proposition}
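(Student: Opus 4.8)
The plan is to reduce everything to the origin and then express the total ancestor displacement as a sum of individual ancestral jumps, each of which I can control in terms of its birth time. By translation invariance take $z = \origin$, and note that the Palm/Slivnyak description of the conditioning simply inserts a particle $\xi$ at $(t_\xi,\origin)$ with $t_\xi \in (-t,0]$, leaving the law of the remaining process unchanged. Write $\sigma_i = t_{\parent[i,\xi]}$ for the birth times along the ancestral line, so $t_\xi = \sigma_0 > \sigma_1 > \cdots$, and let $D_i = \| z_{\parent[i,\xi]} - z_{\parent[i-1,\xi]} \|$ be the length of the $i$-th ancestral jump. Since $\ancestor(0,\xi) = \xi$ and the line of descent is piecewise constant in time,
\[ \| z_{\ancestor(-t,\xi)} - z_\xi \| \le \sum_{i \ge 1:\ \sigma_{i-1} > -t} D_i . \]

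The two ingredients I would establish are, first, a scale-free bound on each jump, and second, a lower bound on the rate at which the birth times recede. For the jumps: $\parent[i,\xi]$ is by definition the nearest point of $\Xi_{<\sigma_{i-1}}$ to $z_{\parent[i-1,\xi]}$, and $\Xi_{<u}$ has spatial intensity $e^u$; hence, ignoring the conditioning from earlier steps, $\Pr(D_i > y \mid \sigma_{i-1} = u) = \exp(-\pi e^{u} y^2)$, so the normalized jump $\tilde D_i := e^{\sigma_{i-1}/2} D_i$ has the Gaussian-type tail $\exp(-\pi x^2)$ uniformly in $u$. For the birth times: splitting $\Xi_{<u}$ into the particles born before and after $u-x$ (independent Poisson processes of spatial intensities $e^{u-x}$ and $e^{u}(1-e^{-x})$) and using that the spatially nearest point of a superposition belongs to a given component with probability proportional to its intensity, I obtain $\Pr(\sigma_{i-1} - \sigma_i > x \mid \sigma_{i-1} = u) = e^{-x}$; that is, each ancestral time-gap is $\mathrm{Exp}(1)$.

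Combining these, I normalize by $e^{t/2}$ and re-index the ancestors backwards from the one crossing level $-t$, writing $h_i = \sigma_{i-1} + t \ge 0$ for the height of the $i$-th ancestor above $-t$. Then
\[ e^{-t/2}\, \| z_{\ancestor(-t,\xi)} - z_\xi \| \le \sum_{i:\ \sigma_{i-1} > -t} e^{-h_i/2}\, \tilde D_i , \]
and since consecutive heights differ by $\mathrm{Exp}(1)$ gaps while the $\tilde D_i$ have uniform Gaussian tails, the right-hand side is stochastically dominated by a geometric-type sum $\sum_{j \ge 0} e^{-H_j/2}\, \tilde D_j$, with $H_j$ a sum of $j$ independent $\mathrm{Exp}(1)$ variables (taking the crossing ancestor to sit at height $0$ as the worst case), whose distribution does not depend on $t$. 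Taking $G(r)$ to be an upper bound for the tail of this dominating sum gives $G_t(r) \le G(r)$ for all $t$, with $G(r) \downarrow 0$ as $r \uparrow \infty$. The integrability is then automatic: $\int_0^\infty r\,G(r)\,dr = \tfrac12 \Ex[(\sum_j e^{-H_j/2}\tilde D_j)^2] < \infty$, because $\sum_j e^{-H_j/2} < \infty$ a.s.\ and in $L^2$, and each $\tilde D_j$ has bounded $L^2$-norm.

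The main obstacle is the dependence I glossed over: $\parent[i-1,\xi]$ being the nearest neighbor of $\parent[i-2,\xi]$ forces an empty disc into the configuration, which may overlap the region searched at the next step and thereby inflate $D_i$ (and could perturb the gap law). To make the domination rigorous I would prove a \emph{conditional} statement — that, given the entire history up to the $(i-1)$-th ancestor, the law of $D_i$ is stochastically dominated by the nearest-neighbor distance in an unconditioned intensity-$e^{\sigma_{i-1}}$ process modified by at most one excluded disc, and that the gaps remain stochastically at least $\mathrm{Exp}(1)$. Controlling this excluded-region effect uniformly, so that the clean geometric sum above is a valid stochastic upper bound regardless of $t$ and of $t_\xi$, is the technical crux; once it is in place, everything else is summation of well-behaved tails.
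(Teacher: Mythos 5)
Your reduction to the ancestral chain, the $\mathrm{Exp}(1)$ law of the successive time gaps, and the idea of dominating the normalized displacement by a sum $\sum_j e^{-H_j/2}\tilde D_j$ indexed backwards from the level-crossing ancestor all match the skeleton of the paper's argument. But the step you yourself flag as the ``technical crux'' is a genuine gap, and your proposed fix is wrong. After $i$ steps the conditioning is that $\Xi_{<\sigma_{i-1}}$ has no points in the \emph{union of $i$ discs} $C_{i-1}=\bigcup_{j}\disc(z_j,\|z_{j+1}-z_j\|)$, not ``at most one excluded disc.'' The current ancestor $z_{i-1}$ sits on the boundary of this union, and nothing prevents $C_{i-1}$ from covering all but a thin wedge of a neighbourhood of $z_{i-1}$; in that case $\Pr(D_i>y\mid\mbox{history})=\exp\bigl(-e^{\sigma_{i-1}}\area(\disc(z_{i-1},y)\setminus C_{i-1})\bigr)$ can have a tail far heavier than $\exp(-c\,e^{\sigma_{i-1}}y^2)$, so the uniform Gaussian-type tail for $\tilde D_i$ --- the load-bearing ingredient of your stochastic domination --- fails. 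There is no a priori lower bound on $\area(\disc(z_{i-1},y)\setminus C_{i-1})/y^2$ that is uniform over reachable histories, which is exactly why one cannot control the individual jump lengths directly.

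The paper circumvents this by never bounding the jumps at all. The quantity with an exactly clean conditional law is the \emph{area increment}: by the thinning property, $\area(C_{i+1})-\area(C_i)$ is Exponential$(e^{-\tau_i})$ given the entire history, whatever the shape of $C_i$ (Lemma \ref{LA1}). A deterministic isodiametric-type geometric lemma (Lemma \ref{LC1}) then converts the accumulated area into a bound on $\diam(C_i)$, and since both $z_\xi$ and $z_{\ancestor(-t,\xi)}$ lie in the final excluded region, the displacement is at most that diameter. This yields the dominating variable $\chi=2\pi^{-1/2}\sum_u u\,e^{-\sigma_u}\eta_u^{1/2}$; note the extra linear weight $u$ coming from the telescoped diameter bound, which your proposed geometric sum omits. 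Your Exp$(1)$ gap law and the backwards re-indexing are correct and reappear in the paper, but the area-then-diameter detour is the missing idea, and without it (or a substitute uniform control on the excluded fraction near the current ancestor) the domination you assert cannot be established.
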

The rest of section \ref{sec:part1} is devoted to the proof of Proposition \ref{P1} and a variant (Proposition \ref{C:ld}). 
As mentioned earlier, the conceptual point of Proposition \ref{P1} is that the distance to the time $t$ (in the past) ancestor 
is the same order of magnitude as the distance to the closest particle at that time, that is order $e^{t/2}$. 
An expression for $G(r)$ is given at (\ref{def:Gr}).

The elementary ``thinning" property of Poisson processes leads to a corresponding 
property of our space-time Poisson point process $\bm{\Xi}$.  
As $t$ runs backwards over $\infty > t > -\infty$, the processes $\Xi_{\le t}$ evolve according to the rule
\begin{quote}
each particle is deleted at stochastic rate $1$.
\end{quote}
This {\em Poisson thinning process} representation is the foundation for much of our analysis, as are the related 
{\em self-similarity} properties of our derived processes, discussed in section \ref{sec:Nota}.

To be pedantic, in forwards time we work with the filtration
$\FF_t = \sigma(\Xi_{\le t})$. 
In reversed time we work with the filtration 
\begin{equation}
\stackrel{\leftarrow} {\FF_{t}}= \sigma( (\max(t_\xi,t), z_\xi) \ : \ \xi \in \bm{\Xi} ) .
\label{f-back}
\end{equation}
So $\stackrel{\leftarrow} {\FF_{t}}$ tells us the positions of all particles, and the arrival times of particles born after time $t$,
and the following ``thinning process" property holds.
\begin{Lemma}
\label{L:rev}
Conditional on $\stackrel{\leftarrow} {\FF_{t}}$, the 
 previous lifetimes
$\{t - t_\xi : \ z_\xi \in \ZZ_{\le t} \}$ of the particles alive at time $t$ are i.i.d. with Exponential(1) distribution.
\end{Lemma}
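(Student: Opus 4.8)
The plan is first to pin down exactly what $\stackrel{\leftarrow}{\FF_{t}}$ records, and then to reduce the statement to the elementary marking property of a Poisson process. Split the space-time PPP at time $t$ into $\bm{\Xi}^- = \{\xi \in \bm{\Xi} : t_\xi \le t\}$ and $\bm{\Xi}^+ = \{\xi \in \bm{\Xi} : t_\xi > t\}$; since these are supported on disjoint regions of $\Reals \times \Reals^2$, they are independent Poisson point processes. Inspecting the generators $(\max(t_\xi,t), z_\xi)$ of $\stackrel{\leftarrow}{\FF_{t}}$: for $\xi \in \bm{\Xi}^+$ one has $\max(t_\xi,t) = t_\xi$, so the full configuration of $\bm{\Xi}^+$ is recorded, whereas for $\xi \in \bm{\Xi}^-$ one has $\max(t_\xi,t) = t$, so only the position $z_\xi$ is recorded, i.e. the point process $\ZZ_{\le t}$. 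Thus $\stackrel{\leftarrow}{\FF_{t}} = \sigma(\bm{\Xi}^+) \vee \sigma(\ZZ_{\le t})$.

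Next I would discard the late particles. The lifetimes $\{t - t_\xi : z_\xi \in \ZZ_{\le t}\}$ and the positions $\ZZ_{\le t}$ are both measurable with respect to $\bm{\Xi}^-$, while $\bm{\Xi}^+$ is independent of $\bm{\Xi}^-$; hence conditioning additionally on $\sigma(\bm{\Xi}^+)$ does not change the conditional law, and it suffices to compute the law of the lifetimes given $\sigma(\ZZ_{\le t})$ alone. For this I would apply the standard projection/marking theorem to $\bm{\Xi}^-$, regarded as a PPP on $\Reals^2 \times (-\infty, t]$ with intensity $e^s\, dz\, ds$: its projection onto $\Reals^2$ is a rate-$e^t$ PPP, since $\int_{-\infty}^t e^s\, ds = e^t$, and conditionally on that projection the birth times attached to distinct positions are independent, each with density $e^s/e^t = e^{s-t}$ on $(-\infty,t]$. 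The change of variable to the age $a = t - t_\xi$ converts this density into $e^{-a}$ on $[0,\infty)$, which is Exponential(1), giving the claim.

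The step I expect to require the most care is the conditional-independence bookkeeping: verifying that $\stackrel{\leftarrow}{\FF_{t}}$ factors cleanly as $\sigma(\bm{\Xi}^+) \vee \sigma(\ZZ_{\le t})$, and that the collection of lifetimes is $\sigma(\bm{\Xi}^-)$-measurable, so that the independent late configuration $\bm{\Xi}^+$ may be dropped from the conditioning without altering the conditional distribution. Once this is in place the remaining Poisson computation is entirely routine, and the almost-sure distinctness of the positions makes the indexing of the lifetimes by $z_\xi \in \ZZ_{\le t}$ well defined.
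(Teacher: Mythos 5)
Your proof is correct and is exactly the argument the paper has in mind: the paper states Lemma \ref{L:rev} without proof as an elementary consequence of the thinning/marking property of the space-time PPP, and your decomposition $\stackrel{\leftarrow}{\FF_{t}} = \sigma(\bm{\Xi}^+) \vee \sigma(\ZZ_{\le t})$ followed by the marking theorem (factorizing the intensity as $e^s\,dz\,ds = e^t\,dz \cdot e^{s-t}\,ds$) fills in the details faithfully. Nothing to correct.
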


\subsection{Derivation of an EA process}
\label{sec:line}
We study lines of descent in the genealogical tree process.
Consider a particle $\xi$ present at time $0$ at position $z_0$.  
From the  thinning process representation, its arrival time $T_0 < 0$ is such that $- T_0$ has  Exponential(1) distribution.
For $i \ge 1$ write $(T_i,Z_i)$ for the arrival time and position of its $i$'th generation ancestor, that is  $\parent[i,\xi]$.
We will show how to  represent this process 
in terms of a certain Markov process we will call the {\em excluded area} (EA) process. 

Conditional on $\{T_0 = t_0\}$ the particles present at $t_0$ are distributed as the PPP $\Xi_{< t_0}$, 
and so $\parent[i,\xi]$ is the closest such point to $z_0 $, at position $Z_1$ say.
Conditional also on $\{Z_1 = z_1\}$, 
we know there are no points of $\Xi_{< t_0}$ in the interior of
$C_1:= \disc(z_0, ||z_1 - z_0||)$.
The arrival time $T_1$ of $Z_1$ has density function 
$\propto e^t$ on $-\infty < t < t_0$, implying that 
$t_0 - T_1$ has Exponential($1$) distribution. 

Now given $T_0 = t_0$ and $(T_1,Z_1) = (t_1,z_1)$, the information we have about $\Xi_{< t_1}$ is precisely 
the fact that it has no points in $C_1$.
So $Z_2$ is the closest point to $z_1$ in a PPP of rate $e^{t_1}$ 
on $\Reals^2 \setminus C_1$.
And as before, $t_1 - T_2$ has Exponential($1$) distribution.

Now given $T_0 = t_0$ and  $(T_1,Z_1) = (t_1,z_1)$ and $(T_2,Z_2) = (t_2,z_2)$, 
we have built an ``excluded region" 
$C_2 := C_1 \cup \disc(z_1, ||z_2 - z_1||)$.
The information we have about $\Xi_{< t_2}$  is precisely 
that it is a PPP of rate $e^{t_2}$ with no points in $C_2$, 
and we can continue inductively to describe the entire process 
$((T_i, Z_i), i \ge 0)$.

\subsection{Definition of the EA process}
Here we re-specify the process above in intrinsic terms.
 Working with time $ \downarrow - \infty$ is rather 
counter-intuitive, so in the definition below it seems helpful to reverse the direction of time.

Consider the space $\Complex$ of triples $\bc = (C,z,\tau)$ such that
\[ \mbox{$C$ is a compact set in $\Reals^2$; \quad 
$z \in C$; \quad 
$0 \le  \tau < \infty$}. \]
Given an element $\bc = (C,z,\tau) \in \Complex$
we can define a probability distribution $\mu_{\bc}$
on $\Complex$ as follows.
Take a PPP $\widetilde{\Xi}$ of rate $e^{- \tau}$ on $\Reals^2 \setminus C$.
Let $\xi$ be the point of $\widetilde{\Xi}$ closest to $z$.
Set
\[ z^\prime = \xi; \ C^\prime = C \cup \disc(z,||\xi - z||); \ 
\tau^\prime = \tau + \theta\]
 where $\theta$ has Exponential(1) 
distribution independent of $\widetilde{\Xi}$.
Then let $\mu_{\bc}$ be the distribution 
of $(C^\prime, z^\prime, \tau^\prime)$.

\begin{figure}
\caption{Illustration of the standard EA process. 
$C_i$ is the union of the discs centered at $\origin, z_1,\ldots,z_{i-1}$, 
and $z_i$ is on the boundary of $C_i$.}
\label{Fig:Ci}

\setlength{\unitlength}{0.05in}
\begin{picture}(50,55)(-35,-25)
\put(0,0){\circle*{1}}
\put(0,0){\circle{10}}
\put(-3,4){\circle*{1}}
\put(-3,4){\circle{15}}
\put(1.5,10){\circle*{1}}
\put(1.5,10){\circle{25}}
\put(11.5,2.5){\circle*{1}}
\put(11.5,2.5){\circle{47}}
\put(18,25){\circle*{1}}
\put(-2,-0.5){$\origin$}
\put(-5,5){$z_1$}
\put(2,11){$z_2$}
\put(12,0.9){$z_3$}
\put(18,26.1){$z_4$}
\put(24,17){$C_4$}
\put(5,17){$C_3$}
\end{picture}
\end{figure}
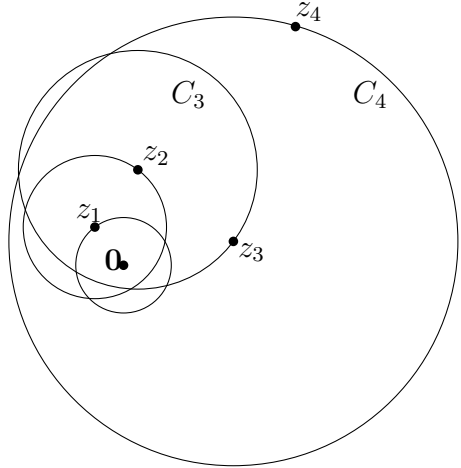

Define the {\em EA process}
to be the $\Complex$-valued Markov chain 
$(\bC_i = (C_i,Z_i,\tau_i), 0 \le i < \infty)$ 
where, for each step $i$, the conditional distribution of 
$\bC_{i+1}$ given $\bC_i= \bc$
is the distribution $\mu_{\bc}$ specified above.  
Figure \ref{Fig:Ci} provides an illustration.
It is straightforward to formalize the argument in section \ref{sec:line} to show
\begin{Lemma}
\label{L:line_descent}
Condition on $\bm{\Xi}$ containing a particle $\xi$ with $t_\xi \le 0$ and $z_\xi = z_0$.  
The process $((t_{\parent[i,\xi]}, z_{\parent[i,\xi]}), 0 \le i < \infty)$ of arrival times and positions of the ancestors of this $\xi$  
is distributed as the random process 
$((-\tau_i, Z_i),  0 \le i < \infty)$ within
the EA process $((C_i,Z_i,\tau_i), 0 \le i < \infty)$ with initial state 
$(\{z_0\}, z_0, \tau_0)$, 
where $\tau_0$ has Exponential(1) distribution.
\end{Lemma}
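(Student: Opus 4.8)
The plan is to formalize the inductive construction sketched in Section \ref{sec:line}, proving by induction on $i$ that the revealed ancestral chain $((T_j, Z_j), 0 \le j \le i)$ is a Markov chain whose one-step transition is exactly the EA kernel $\mu_{\bc}$ (under the time-reversal $\tau_j = -T_j$) and whose initial law matches the EA initial state $(\{z_0\}, z_0, \tau_0)$ with $\tau_0$ Exponential(1). First I would dispose of the base case. Conditioning $\bm{\Xi}$ on containing a particle $\xi$ with $z_\xi = z_0$ and $t_\xi \le 0$: by the Slivnyak--Mecke theorem the remainder of $\bm{\Xi}$ is again distributed as $\bm{\Xi}$ and independent of the added point, so the only effect of the conditioning is to insert the point $(t_\xi, z_0)$. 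Since the space-time intensity is $e^{t}\,dt\,dz$, the conditional density of $t_\xi$ given $z_\xi = z_0$ is $\propto e^{t}$; restricting to $t_\xi \le 0$ and normalizing by $\int_{-\infty}^0 e^{t}\,dt = 1$ makes $\tau_0 := -T_0$ an Exponential(1) variable, matching the initial EA state (whose compact set $\{z_0\}$ is Lebesgue-null and so imposes no constraint).

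The inductive step carries the real content. I would establish as the key claim the following conditional description: given the ancestors $((T_j, Z_j), 0 \le j \le i)$, the process $\Xi_{<T_i}$ is, in its spatial marginal, a PPP of rate $e^{T_i}$ on $\Reals^2 \setminus C_i$, where $C_i = \bigcup_{j=0}^{i-1}\disc(Z_j, \|Z_{j+1}-Z_j\|)$ and $Z_0 = z_0$, and where each such particle carries, conditionally and independently, a birth time of density $\propto e^{t}$ on $(-\infty, T_i)$. Granting this, the parent $\parent[i+1,\xi]$ is by definition the nearest point of $\Xi_{<T_i}$ to $Z_i$, so $Z_{i+1}$ is the nearest point to $Z_i$ in a rate-$e^{T_i}$ PPP on $\Reals^2 \setminus C_i$ --- precisely the recipe defining $\mu_{\bc}$ with $e^{-\tau_i} = e^{T_i}$; its birth time satisfies $T_i - T_{i+1} \sim$ Exponential(1) independently of the spatial configuration (the same computation as for $\tau_0$, applied to the selected point), matching $\tau' = \tau + \theta$; and the updated excluded set $C_{i+1} = C_i \cup \disc(Z_i, \|Z_{i+1}-Z_i\|)$ matches $C'$.

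To propagate the claim I would reveal $(T_{i+1}, Z_{i+1})$ and pass from $\Xi_{<T_i}$ to $\Xi_{<T_{i+1}}$, which discards all particles born in $[T_{i+1}, T_i)$. The only surviving constraint on $\Xi_{<T_{i+1}}$ is the absence of points in $C_{i+1}$: the nearest-point condition excludes exactly the open disc $\disc(Z_i, \|Z_{i+1}-Z_i\|)$, while outside $C_{i+1}$ the process is unconditioned by the independence of a PPP over disjoint space-time regions. The spatial marginal then has intensity $\int_{-\infty}^{T_{i+1}} e^{t}\,dt = e^{T_{i+1}}$ on $\Reals^2 \setminus C_{i+1}$, closing the induction. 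Finally, assembling the one-step kernels and reading $\tau_i = -T_i$ identifies the ancestral chain in law with the EA process, as asserted.

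The hard part will be exactly this bookkeeping of conditional information: verifying that revealing the nearest ancestor and its birth time leaks no information about $\Xi_{<T_{i+1}}$ beyond the excluded region $C_{i+1}$. The subtleties I expect to handle carefully are that the revealed point $Z_{i+1}$ lies on the boundary of $C_{i+1}$ and is born \emph{at} time $T_{i+1}$, hence is not itself a member of $\Xi_{<T_{i+1}}$; that the points of $\Xi_{<T_i}$ born during $[T_{i+1}, T_i)$, about which the construction may carry incidental information, simply drop out upon restriction to $\Xi_{<T_{i+1}}$; and that the nearest-point selection is a measurable functional of the \emph{spatial} positions alone, so that the time-marginal computation yielding the Exponential(1) increments decouples cleanly from the spatial geometry. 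All of these reduce to the restriction and independence properties of the space-time PPP, which is why the argument, though intricate, is routine to make rigorous.
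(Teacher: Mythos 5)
Your proposal is correct and follows essentially the same route as the paper: the paper's proof of Lemma \ref{L:line_descent} consists precisely of the inductive derivation in section \ref{sec:line} (tracking the excluded region $C_i$, identifying the conditional law of $\Xi_{<T_i}$ as an unconditioned PPP off $C_i$ with independent exponential birth-time marks, and reading off the Exponential(1) time increments), together with the remark that formalizing it is straightforward. Your Slivnyak--Mecke treatment of the base case and your explicit handling of the measure-zero and strict-inequality caveats are exactly the formalization the paper has in mind.
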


{\bf Terminology.}  
In what follows we write {\em step} for the steps $i$ of the EA chain, 
and {\em time} for the $\tau$'s.

\subsection{Geometric analysis of the EA process}
It is enough to study the {\em standard} EA process with
initial state
\[ (C_0,z_0,\tau_0) = (\{\origin\}, \origin, \tau_0) \] 
where $\tau_0$ has Exponential(1) distribution.\footnote{This is notationally more convenient than taking $\tau_0 = 0$, because of our convention that particles are labeled by position and arrival time.}
So in the context of Lemma \ref{L:line_descent} we will study ancestors of a particle present at position $\origin$ at time $0$.
The starting observation, Lemma \ref{LA1} below, is an expression for the growth of the area of $C_i$ at each step. 
After that we use geometric arguments to bound the diameter of $C_i$ in terms of its area.
Because $Z_i$ is on the boundary of $C_i$ this will be enough to prove Proposition \ref{P1}.

\begin{Lemma}
\label{LA1}
Conditional on $\bC_i = (C_i,z_i,\tau_i)$, the increment
$\area(C_{i+1}) - \area(C_i)$ has Exponential($e^{-\tau_i}$) distribution, independent of $\tau_{i+1} - \tau_i$.
\end{Lemma}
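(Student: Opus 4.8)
The plan is to work conditionally on $\bC_i = (C_i,z_i,\tau_i)$, which fixes the compact set $C_i$, the point $z_i \in C_i$, and the time $\tau_i$; the only remaining randomness in one step is the fresh PPP $\widetilde{\Xi}$ of rate $e^{-\tau_i}$ on $\Reals^2 \setminus C_i$ together with the independent Exponential(1) variable $\theta = \tau_{i+1} - \tau_i$ from the definition of $\mu_{\bc}$. Since $C_{i+1} = C_i \cup \disc(z_i, R)$ with $R := \| \xi - z_i \|$ the distance from $z_i$ to the nearest point $\xi$ of $\widetilde{\Xi}$, the increment in area is exactly
\[ \area(C_{i+1}) - \area(C_i) = \area\bigl( \disc(z_i, R) \setminus C_i \bigr) . \]
So it suffices to identify the law of this swept area and to check that it does not depend on $\theta$.

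First I would introduce the nondecreasing, continuous \emph{area function}
\[ f(r) := \area\bigl( \disc(z_i, r) \setminus C_i \bigr), \qquad r \ge 0, \]
so that the target quantity is $f(R)$. The key step is a time-change (reparametrization by area) argument: because $\widetilde{\Xi}$ has constant intensity $e^{-\tau_i}$ on $\Reals^2 \setminus C_i$, the number of its points lying in $\disc(z_i, r) \setminus C_i$ is Poisson with mean $e^{-\tau_i} f(r)$. Hence, writing $r_a := \inf\{ r : f(r) > a \}$ and using continuity of $f$ to get $f(r_a) = a$, the event $\{ f(R) > a \}$ coincides with $\{ R > r_a \}$, i.e. the event that $\disc(z_i, r_a) \setminus C_i$ contains no point of $\widetilde{\Xi}$. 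By the void probability of the Poisson process,
\[ \Pr\bigl( f(R) > a \bigr) = \exp\bigl( - e^{-\tau_i} f(r_a) \bigr) = \exp\bigl( - e^{-\tau_i} a \bigr), \]
which is exactly the Exponential$(e^{-\tau_i})$ tail.

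Finally, independence from $\tau_{i+1} - \tau_i$ is immediate: by the definition of $\mu_{\bc}$ the increment $\theta = \tau_{i+1} - \tau_i$ is Exponential(1) and chosen independently of $\widetilde{\Xi}$, whereas $f(R)$ is a deterministic functional of $\widetilde{\Xi}$ (given the conditioned data $C_i, z_i$); hence $f(R)$ and $\theta$ are independent. I expect the only point requiring care to be the reparametrization by area --- specifically, handling the possibility that $f$ is initially flat (when $z_i$ lies in the interior of $C_i$) and confirming that continuity of $f$ lets $r \mapsto f(r)$ attain every positive value, so that the identity $\{ f(R) > a \} = \{ R > r_a \}$ with $f(r_a) = a$ holds with no exceptional cases. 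Everything else is a routine application of the void-probability / mapping property of Poisson processes.
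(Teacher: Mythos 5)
Your proposal is correct and follows essentially the same route as the paper: the paper's proof also computes $\Pr(\area(C_{i+1})-\area(C_i) > a_r)$ as the void probability of the rate-$e^{-\tau_i}$ Poisson process on $\disc(z_i,r)\setminus C_i$, yielding $\exp(-e^{-\tau_i} a_r)$, and notes that independence from $\tau_{i+1}-\tau_i$ holds by construction. Your extra care about the reparametrization by area (continuity and initial flatness of $f$) is a harmless elaboration of the same argument.
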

\begin{proof}
Writing $a_r$ for the area of 
$\disc(z_i,r) \setminus C_i$, 
\begin{eqnarray*}
\lefteqn{
\Pr (\area(C_{i+1}) - \area(C_i) > a_r  )
}
\\
&=&
\Pr(\mbox{ no point of a rate $e^{-\tau_i}$ Poisson process in 
$\disc(z_i,r) \setminus C_i$} )\\
&=& \exp(-  e^{-\tau_i} a_r) .
\end{eqnarray*}
The independence holds by construction.
\end{proof}

We can {\em lower} bound the diameter in terms of the area via
 the classical 
fact (called Bieberbach's inequality or the isodiametric inequality -- 
see \cite{MR872858} for a short proof) that the disc is extremal: 
\begin{equation}
\area(C) \le \sfrac{\pi}{4} (\diam(C))^2, \mbox{ all compact } C \subset \Reals^2 .
\label{iso}
\end{equation}
We want a corresponding {\em upper} bound, to verify that $C_j$ does not
become long and thin.  
The bound will rely upon the following geometry lemma.

\begin{Lemma}
\label{LC1}
Let $C$ be a compact set in $\Reals^2$ and let $D$ be a closed disc 
whose center is in $C$.  Then
\[
\diam(C \cup D) \le  \max \left(  \diam(C)  +
\sqrt{\sfrac{2 (\area(C \cup D) - \area(C))}{\pi}} , 
\sqrt{
 \sfrac{4 \area (C \cup D) )}{\pi} } \right) .\]
\end{Lemma}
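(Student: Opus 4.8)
The plan is to work directly with a pair of points $p,q\in C\cup D$ realizing the diameter $d:=\diam(C\cup D)$, and to split into cases according to which of $C,D$ these endpoints lie in. Write $D=\disc(w,\rho)$, so that $w\in C$ by hypothesis, and set $R:=\diam(C)$, $A:=\area(C\cup D)$, $\Delta:=\area(C\cup D)-\area(C)=\area(D\setminus C)$. If both $p,q\in C$ then $d=R\le R+\sqrt{\sfrac{2\Delta}{\pi}}$, which is the first term. If both $p,q\in D$ then $d\le\diam(D)=2\rho$; since $A\ge\area(D)=\pi\rho^2$ we get $2\rho\le\sqrt{\sfrac{4A}{\pi}}$, which is the second term. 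So the whole content lies in the mixed case $p\in C\setminus D$, $q\in D\setminus C$, and there I must prove $d\le R+\sqrt{\sfrac{2\Delta}{\pi}}$.

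First I would normalize the mixed configuration. The farthest point of the disc $D$ from the fixed point $p$ is the boundary point $q^\ast:=w+\rho\,u$ with $u:=(w-p)/\|w-p\|$, at distance $\|p-w\|+\rho$. Since $q^\ast\in D\subseteq C\cup D$ we have $\|p-q^\ast\|\le d=\|p-q\|$, forcing equality, so I may replace $q$ by $q^\ast$; moreover $q^\ast\notin C$, since $q^\ast\in C$ would give $\|p-q^\ast\|\le R<d$. Thus without loss of generality $d=\|p-w\|+\rho$ with $p,q^\ast$ collinear through $w$. The hypothesis $w\in C$ now enters decisively: because $p,w\in C$ we have $\|p-w\|\le R$, so the excess $\beta:=d-R=\|p-w\|+\rho-R$ satisfies $0\le\beta\le\rho$. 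The goal reduces to the clean area inequality $\Delta\ge\sfrac{\pi}{2}\beta^{2}$, which is equivalent to $\beta\le\sqrt{\sfrac{2\Delta}{\pi}}$.

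To bound $\Delta$ from below I would exhibit an explicit region of $D\setminus C$. Since $p,x\in C$ implies $\|p-x\|\le R$, we have $C\subseteq\disc(p,R)$. Let $m:=p+R\,u$, the point of $\partial\disc(p,R)$ on the segment $pq^\ast$; by convexity of $\disc(p,R)$ the open half-plane $\{x:\langle x-m,u\rangle>0\}$ is disjoint from $\disc(p,R)$, hence from $C$. Intersecting with $D$ shows that the cap $\{x\in D:\langle x-w,u\rangle\ge\rho-\beta\}$ lies in $D\setminus C$ (up to a null set), using $m=w+(\rho-\beta)u$. The area of this circular segment of $\disc(w,\rho)$ cut at signed distance $\rho-\beta$ from the center is
\[
\rho^{2}\cos^{-1}\!\Big(\sfrac{\rho-\beta}{\rho}\Big)-(\rho-\beta)\sqrt{\rho^{2}-(\rho-\beta)^{2}} ,
\]
so it suffices to show this is at least $\sfrac{\pi}{2}\beta^{2}$. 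Writing $s=\beta/\rho\in[0,1]$, this is the single-variable inequality $g(s):=\cos^{-1}(1-s)-(1-s)\sqrt{2s-s^{2}}-\sfrac{\pi}{2}s^{2}\ge0$, with $g(0)=g(1)=0$.

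The main obstacle is exactly the verification of $g(s)\ge0$ on $[0,1]$: the bound is tight at both endpoints (the needle $C$ corresponding to $s=1$ and the limiting disc to $s=0$), so no crude estimate suffices and one must check the sign of $g$ (e.g.\ via its derivative, noting $g'(s)=\sqrt{2s-s^2}-\pi s$ after simplification, which is positive then negative, so $g$ rises then returns to $0$). The remaining steps — the case split, the reduction to $q^\ast\in\partial D$, and the identification of the cap via convexity and (\ref{iso}) for the $2\rho\le\sqrt{4A/\pi}$ estimate — are routine once this inequality is in hand.
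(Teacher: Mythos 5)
Your proof is correct and arrives at the same key inequality as the paper --- that the area added to $C$ by the disc is at least $\sfrac{\pi}{2}$ times the square of the excess of $\diam(C\cup D)$ over $\diam(C)$ --- but by a somewhat different route. The paper takes $y$ to be the point of $\partial D$ farthest from $C$ (at distance $r_0$), bounds the excess by $r_0$ via the triangle inequality, and uses a supporting-line argument at the nearest point of $C$ (which requires first reducing to convex $C$ via the convex hull) to show that the half-space beyond that line cuts off a cap of $D$ of area at least $\sfrac{1}{2}\pi r_0^2$; the fact that this cap area is minimized when the radius of $D$ equals $r_0$ is asserted as clear. You instead anchor the argument at the diameter-realizing endpoint $p\in C$, use the enclosing disc $\disc(p,\diam(C))$ in place of convexity (so no convex-hull reduction is needed), work with the exact excess $\beta$ rather than an upper bound for it, and verify the cap-area inequality explicitly as the single-variable statement $g(s)\ge 0$ on $[0,1]$. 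That inequality is precisely the minimization the paper leaves to the reader, so the two proofs share the same mathematical core; yours is marginally more self-contained at the cost of an explicit calculus verification. One small slip: the derivative simplifies to $g'(s)=2\sqrt{2s-s^2}-\pi s$, not $\sqrt{2s-s^2}-\pi s$. This does not affect your argument, since the correct derivative still changes sign exactly once on $(0,1)$, from positive to negative (at $s=8/(4+\pi^2)$), so $g$ still rises from $g(0)=0$ and returns to $g(1)=0$, giving $g\ge 0$ as required.
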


\begin{proof}
The right side clearly bounds the distance between two points in $C$, and also between two points in $D$ because
\[ \sup_{z, z^\prime \in D} ||z - z^\prime|| = \diam(D) 
= \sqrt{\sfrac{4}{\pi}  \area (D)} 
\le \sqrt{\sfrac{4}{\pi}  \area (C \cup D)} .
\]
So it will suffice to prove the bound for one point in $C$ and the other in $D$, that is to prove
\begin{equation}
 \sup_{ z \in C, z^\prime \in D} ||z - z^\prime|| 
- \diam(C) \le \sqrt{\frac{2 (\area(C \cup D) - \area(C))}{\pi}} .
\label{zzp2}
\end{equation}
Figure \ref{Fig:CcupD} illustrates the argument.

\begin{figure}
\caption{Illustration of proof of Lemma \ref{LC1}.}
\label{Fig:CcupD}
\setlength{\unitlength}{0.06in}
\begin{picture}(50,45)(-35,-22)
\put(0,0){\circle{30}}
\put(0,0){\circle*{1.0}}
\put(4,4){\circle*{0.5}}
\put(4,4){\line(-1,1){15}}
\put(4,4){\line(1,-1){15}}
\put(10.6,10.6){\circle*{0.5}}
\put(-4,0){\vector(1,0){3.6}}
\put(-4,0){\vector(-1,0){10.6}}
\put(7.5,7.5){\vector(1,1){2.8}}
\put(7.5,7.5){\vector(-1,-1){2.9}}
\put(-0.5,-2.3){$v$}
\put(3.5,2.2){$w$}
\put(18.5,-12.8){$\ell$}
\put(10.8,11.5){$y$}
\put(7.5,6.0){$r_0$}
\put(-7,-1.8){$r$}
\qbezier(4,4)(-8,16)(-27, 2)
\qbezier(4,4)(12,-4)(4,-22)
\put(-21,3){$C$}
\put(-1,-12){$D$}
\put(19,-7){$H$}
\end{picture}
\end{figure}
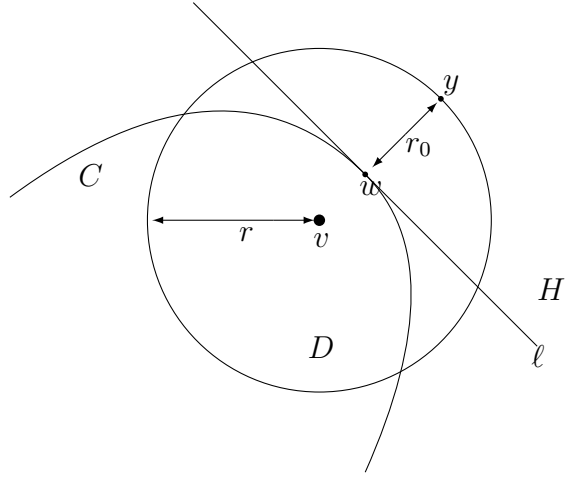

First assume $C$ is convex.  
If $D \subseteq C$ the result is trivial, so suppose not.  
Let $y$ be a point on the boundary of $D$ at maximal distance 
($= r_0$, say) from $C$, and let $w$ be a point in $C$ with 
$||y - w|| = r_0$.  
Then 
\begin{equation}
 \sup \{ ||z - z^\prime|| \ : \ z \in C, z^\prime \in D\} 
\le \diam(C) + r_0 
\label{zzp}
\end{equation}
by applying the triangle inequality to the point in $C$ closest to $z^\prime$.
Now consider the half-spaces defined by the line $\ell$ through $w$ that 
is orthogonal to the line segment $\overline{wy}$.  
The convex set $C$ must lie in the half-space not containing $y$, 
else by convexity some point in $C$ would be closer to $y$. 
And the tangent line to the disc at $y$ must be parallel to $\ell$, otherwise
some other point on the boundary would be farther from $C$ than is $y$. 
But this implies that the line segment $\overline{wy}$ is part of the 
line segment $\overline{vy}$, where $v$ is the center of the disc $D$.
So $r_0 \le r := $ radius of $D$, and 
\[  \area(C \cup D) - \area(C) \ge \area(D \cap H) \]
where $H$ is the half-space containing $y$.
Now $\area(D \cap H) $ is a certain function of $r_0$ and $r \ge r_0$, 
and clearly this function is, for fixed $r_0$, minimized at $r = r_0$, and there its value is $\sfrac{1}{2} \pi r_0^2$. So 
\[  \area(C \cup D) - \area(C) \ge  \sfrac{1}{2} \pi r_0^2\]
and combining with (\ref{zzp}) gives (\ref{zzp2}).

In proving (\ref{zzp2}) we assumed $C$ was convex.  For general $C$ we can apply 
(\ref{zzp2})  to its convex hull $C^*$ and then, noting
\[ \diam(C^*) = \diam(C), \quad 
\area(C^* \cup D) - \area(C^*) \le \area(C \cup D) - \area(C)\]
we see that (\ref{zzp2}) remains true for non-convex $C$.
\end{proof}

\subsection{Completing the proof of Proposition \ref{P1}}
\label{sec:completingP1}
Returning to the standard EA process
$\bC_i = (C_i, Z_i, \tau_i)$, we now have sufficient tools to study  $\tau_i$ and 
\[ A_i := \area(C_i), \quad D_i := \diam(C_i) . \]
From Lemma \ref{LA1} we obtain a constructive representation 
of the distribution of $((A_i,\tau_i), 0 \le i < \infty)$, as follows.  
\begin{eqnarray}
\mbox{
The process $(\tau_i, \ i \ge 0)$ forms a Poisson process of rate $1$ on 
$(0,\infty)$.} \label{tauA1}
\\ 
\mbox{ $A_i = \sum_{j=0}^{i-1} e^{\tau_j} \theta_j$ 
where $(\theta_j , j \ge 0)$ are i.i.d. Exponential($1$),}\nonumber  \\
\mbox{ independent of } (\tau_i, \ i \ge 0).
\label{area-exact}
\end{eqnarray}
Then from Lemma \ref{LC1} we get the inequality
\begin{equation}
D_{i+1} \le \max \left(
D_i + \sqrt{ \sfrac{2(A_{i+1} - A_i)}{\pi} } ,
\sqrt{ \sfrac{ 4 A_{i+1}}{\pi} } \right) .
\label{DDAA}
\end{equation}
In this section we use only the weaker inequality
\begin{equation}
 D_{i+1} \le D_i + \sqrt{ \sfrac{ 4 A_{i+1}}{\pi} } . 
\label{weaker}
\end{equation}
Because $D_0 = 0$ this implies 
\begin{equation}
 D_k \le 2\pi^{-1} \sum_{i=1}^k A_i^{1/2} . 
 \label{Dk2}
 \end{equation}
Because
\[ A_i^{1/2} = \left(\sum_{j=0}^{i-1} e^{\tau_j} \theta_j\right)^{1/2} 
\le \sum_{j=0}^{i-1}  (e^{\tau_j} \theta_j)^{1/2} \]
we find that
\begin{equation}
D_k \leq \overline{D}_k:= 2\pi^{-1/2} \sum_{j=0}^{k-1} 
(k-j)  e^{\tau_{j}/2} \theta_{j}^{1/2} .
\label{Dkk}
\end{equation}
In Proposition \ref{P1} we seek to bound the probability of the event
$ \{ ||z_{\ancestor(-t,\xi)} - \origin ||  > r e^{t/2} \}$
for a particle $\xi$ at time $0$ with position $z_\xi = \origin$ 
(the case of general $z_\xi = z$ is the same, by translation-invariance). 
Fix $t$.
 Identifying the EA process with the ``line of descent" process as in Lemma \ref{L:line_descent}, the position 
 $z_{\ancestor(-t,\xi)}$ is by construction on the boundary of the region 
 $C_{N(t) + 1}$ for 
 \[ N(t) = \max \{i: \tau_i < t \} . \] 
 Therefore, using (\ref{Dkk}),
 \begin{equation}
  ||z_{\ancestor(-t,\xi)} - \origin || \le D_{N(t) + 1} \le 2\pi^{-1/2} \sum_{j=0}^{N(t)} 
(N(t)+1-j)  e^{\tau_{j}/2} \theta_{j}^{1/2} . 
\label{zat}
\end{equation}
From properties of the rate-$1$ Poisson process $(\tau_j, j \ge 0)$ on $(0, \infty)$, the time-points
$(t - \tau_{N(t)},   t - \tau_{N(t)-1},  t - \tau_{N(t)-1}, \ldots t -  \tau_0)$ 
are distributed as an initial segment of a rate-$1$ Poisson process $(\sigma_j, j \ge 1)$ on $(0, \infty)$.
So rewriting (\ref{zat}) in terms of the $(\sigma_j)$ and $u = N(t) + 1 - j$ gives 
\begin{equation}
e^{-t/2}   ||z_{\ancestor(-t,\xi)} - \origin || 
\mbox{ is stochastically dominated by } \chi :=   2\pi^{-1/2} \sum_{u=1}^\infty  u e^{- \sigma_u} \eta_u^{1/2} 
\label{etz}
\end{equation}
where  $(\eta_u, u \ge 1)$ are i.i.d. Exponential($1$), independent of  the Poisson process  $(\sigma_u, u \ge 1)$.
So  Proposition \ref{P1} holds for 
\begin{equation}
G(r):= \Pr(\chi > r), \ 0 < r < \infty 
\label{def:Gr}
\end{equation}
and it is easy to check that $\int_0^\infty r G(r) dr < \infty$.

\subsection{A large deviation bound for occupation times}
\label{sec:LD}
The following technical bound will enable us to bound the distance required for two lines of descent to merge (Proposition \ref{Pcouple} later).

\begin{Proposition}
\label{C:ld}
For the standard EA process write 
\[ B  = \cup_{ \{i:   
e^{-\tau_i/2} \diam(C_{i}) > b 
\}} 
[\tau_{i-1}, \tau_{i}) . \]
Then for sufficiently large $b $ there exist $A < \infty$ and $\rho > 0 $ such that
\begin{equation}
\Pr ( \Leb (B \cap [0,T] ) > T/3) \le A \exp(-\rho T), \quad 0<T<\infty 
\label{ld1}
\end{equation}
where $\Leb$ denotes Lebesgue measure.
\end{Proposition}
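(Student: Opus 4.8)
The plan is to strip away the geometry and reduce the occupation-time estimate to a large-deviation bound for an additive functional of an explicit, geometrically ergodic Markov chain. First I would note that by (\ref{Dkk}) we have $e^{-\tau_i/2}\diam(C_i)\le R_i:=2\pi^{-1/2}\sum_{j=0}^{i-1}(i-j)e^{-(\tau_i-\tau_j)/2}\theta_j^{1/2}$, so the set $B$ is contained in $\bigcup_{\{i:\,R_i>b\}}[\tau_{i-1},\tau_i)$ and it suffices to bound the occupation time of $\{R_i>b\}$. Writing $g_i:=\tau_i-\tau_{i-1}$ for the i.i.d.\ Exponential$(1)$ gaps of the rate-$1$ process (\ref{tauA1}), we have $\Leb(B\cap[0,T])\le \sum_{i\ge 1:\,\tau_{i-1}<T} g_i\,\indic\{R_i>b\}$, so the target is an exponential bound on this weighted count.

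The key structural observation is that $R_i$ is governed by a contractive recursion. Introducing the auxiliary sum $S_i:=2\pi^{-1/2}\sum_{j=0}^{i-1}e^{-(\tau_i-\tau_j)/2}\theta_j^{1/2}$, a direct computation gives, with $c:=2\pi^{-1/2}$,
\[ R_{i+1}=e^{-g_{i+1}/2}\bigl(R_i+S_i+c\,\theta_i^{1/2}\bigr),\qquad S_{i+1}=e^{-g_{i+1}/2}\bigl(S_i+c\,\theta_i^{1/2}\bigr). \]
Hence $X_i:=(R_i,S_i,g_i)$ is a Markov chain driven by the i.i.d.\ inputs $(g_{i+1},\theta_i)$. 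Since $\Ex e^{-g/2}=\sfrac{2}{3}<1$ for $g\sim$ Exponential$(1)$, the Lyapunov function $V(R,S)=R+\kappa S$ (with $\kappa$ large) satisfies a geometric drift inequality $\Ex[V(X_{i+1})\mid X_i]\le\gamma V(X_i)+K$ with $\gamma<1$; combined with a minorization on a neighborhood of the origin, which the chain enters after any large gap, this makes the chain geometrically ergodic, with a unique stationary law under which $R_\infty\ed\chi$ is the variable of (\ref{etz}). In particular $\Pr_\pi(R>b)=G(b)\downarrow 0$ as $b\uparrow\infty$.

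With this in hand the statement becomes an occupation-time large deviation for the additive functional $\sum_i f(X_i)$ with $f(R,S,g)=g\,\indic\{R>b\}$. For $b$ large the per-step stationary mean is small: by Cauchy--Schwarz, $\Ex_\pi[g\,\indic\{R>b\}]\le(\Ex g^2)^{1/2}\,G(b)^{1/2}=\sqrt{2}\,G(b)^{1/2}\ll\sfrac{1}{3}$. Since $\sum_{i\le N(T)}g_i\le T$ for $N(T)=\max\{i:\tau_i<T\}$, and $N(T)\le(1+\eps)T$ off an event of exponentially small probability (a Cram\'er bound on a sum of Exponentials with mean exceeding $T$), the event $\{\Leb(B\cap[0,T])>T/3\}$ is a genuine deviation above the mean occupation fraction. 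I would then conclude via the standard exponential bound for additive functionals of a geometrically ergodic chain: either by the G\"artner--Ellis route, estimating the spectral radius of the kernel tilted by $e^{sf}$ for small $s>0$ (legitimate because $f$ has exponential moments through the factor $g$), or by constructing regeneration times from the minorization so that the sum splits into i.i.d.\ cycle contributions to which Cram\'er's theorem applies.

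The main obstacle is precisely the long-range dependence among the events $\{R_i>b\}$: consecutive $R_i$ share the entire tail of their defining series, so the indicators are far from independent and no naive Chernoff estimate over Bernoulli variables applies. The contractive recursion is what tames this, packaging all the dependence into a Markov chain whose forgetting is geometric (contraction factor $\sfrac{2}{3}$ in mean per step), after which occupation-time large deviations are routine. The remaining technical care lies in verifying the drift-plus-minorization (equivalently, regeneration) hypotheses and in handling the unboundedness of the weight $g_i$, for which its exponential moments suffice to justify the tilting; taking $b$ large guarantees the stationary bad-rate is well below $\sfrac{1}{3}$, which is exactly what forces the rate function to be strictly positive and yields the constants $A<\infty$ and $\rho>0$.
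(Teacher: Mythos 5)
Your argument is correct in outline but takes a genuinely different route from the paper. You work directly from the crude bound (\ref{Dkk}), absorbing the linear weights $(i-j)$ into the auxiliary coordinate $S_i$ so that $(R_i,S_i)$ becomes a contractive random-coefficient autoregression, and then invoke general large-deviation theory for additive functionals of a geometrically ergodic chain. The paper instead first sharpens the diameter estimate (Lemma \ref{L:DiA}, which needs the full strength of (\ref{DDAA}) rather than (\ref{weaker})) precisely to avoid those weights; this reduces everything to two \emph{one-dimensional} continuous-time shot-noise processes $(V_t)$ and $(W_t)$, for which downcrossings of the level $b$ are exact regeneration times (the process is at the deterministic state $b$ there), so the occupation time above $b$ per cycle is literally an M/M/1 busy period and the key estimate (\ref{Lb2}) comes from explicit queueing formulas. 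Your route buys conceptual economy: one Markov chain, one standard LDP, no Lemma \ref{L:DiA} and no splitting into area and perimeter contributions. Its cost is that the regeneration structure is no longer free: you must actually establish the minorization/small-set condition for the two-dimensional chain (the map $(g,\theta)\mapsto(R',S')$ is generically a diffeomorphism so a transition density exists, but the first step from the origin degenerates onto the diagonal $R=S$, so a two-step kernel or similar care is needed), and you must handle the unbounded weight $g_i$, which requires exponential moments of the cycle duration (available from the geometric drift) together with a dominated-convergence argument showing that the cycle moment generating function of $\sum g_i\indic_{\{R_i>b\}}$ tends to $1$ as $b\to\infty$. That last estimate is the exact analogue of (\ref{Lb2}) and is where the strict positivity of $\rho$ for large $b$ really comes from, so it deserves to be spelled out rather than folded into ``routine.'' (A minor point: the stationary law of your $R_i$ is $c\sum_u u e^{-\sigma_u/2}\eta_u^{1/2}$, the variable intended at (\ref{etz}); all you use is that its tail vanishes, which is clear from summability.)
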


We previously used inequality (\ref{weaker}) to bound $D_i := \diam(C_i)$ in terms of the areas $A_i = \area(A_i)$. 
Here we will use a slightly different bound. 
\begin{Lemma}
\label{L:DiA}
$D_i \le \sqrt{ \sfrac{ 4 A_{i}}{\pi} } + \sum_{j=0}^{i-1} \sqrt{ \sfrac{2(A_{j+1} - A_j)}{\pi} } $.
\end{Lemma}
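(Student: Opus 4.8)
The plan is to derive the bound by a direct induction on $i$, iterating the single-step inequality (\ref{DDAA}) that is already in hand from Lemma \ref{LC1}. It is convenient to abbreviate $\beta_i := \sqrt{\sfrac{4 A_i}{\pi}}$ and $\delta_j := \sqrt{\sfrac{2(A_{j+1} - A_j)}{\pi}}$, so that the asserted inequality reads $D_i \le \beta_i + \sum_{j=0}^{i-1} \delta_j$, while (\ref{DDAA}) is exactly the recursion $D_{i+1} \le \max\bigl( D_i + \delta_i, \ \beta_{i+1} \bigr)$. The base case is immediate: the standard EA process starts from $C_0 = \{\origin\}$, so $A_0 = 0$ and $D_0 = 0$, and both sides of the claimed inequality vanish.

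For the inductive step I would assume $D_i \le \beta_i + \sum_{j=0}^{i-1} \delta_j$ and split according to which of the two arguments realizes the maximum in (\ref{DDAA}). If the second argument dominates, then $D_{i+1} \le \beta_{i+1} \le \beta_{i+1} + \sum_{j=0}^{i} \delta_j$ since every $\delta_j \ge 0$, which is the desired bound at level $i+1$. If instead the first argument dominates, then the inductive hypothesis gives $D_{i+1} \le D_i + \delta_i \le \beta_i + \sum_{j=0}^{i} \delta_j$. At this point I would invoke the one structural fact the argument relies on, namely that the EA construction sets $C_{i+1} = C_i \cup \disc(z_i, \|\xi - z_i\|) \supseteq C_i$, so the areas are nondecreasing, $A_i \le A_{i+1}$, and hence $\beta_i \le \beta_{i+1}$. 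Substituting this upgrade yields $D_{i+1} \le \beta_{i+1} + \sum_{j=0}^{i} \delta_j$, matching the bound obtained in the other branch.

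Combining the two cases closes the induction and establishes the lemma. There is no genuine obstacle here; the only point needing a moment of care is the monotonicity $A_i \le A_{i+1}$, which is precisely what allows the $\beta_i$ appearing in the first branch to be replaced by $\beta_{i+1}$, so that both branches of the maximum are controlled by the same final expression.
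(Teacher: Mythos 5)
Your proof is correct and is essentially the paper's argument: the paper substitutes $\widetilde{D}_i := D_i - \sqrt{\sfrac{4A_i}{\pi}}$ into (\ref{DDAA}) and uses the monotonicity $A_i \le A_{i+1}$ to drop the negative term, which is just a repackaging of the induction and case split you carry out explicitly. No substantive difference.
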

\begin{proof}
Setting $\widetilde{D}_i := D_i - \sqrt{ \sfrac{ 4 A_{i}}{\pi} } $, inequality (\ref{DDAA}) becomes
\[ \widetilde{D}_{i+1} \le 
 \max \left( 
\widetilde{D}_i + \sqrt{ \sfrac{2(A_{i+1} - A_i)}{\pi} } 
+ \left(  \sqrt{ \sfrac{ 4 A_{i}}{\pi} } - 
\sqrt{ \sfrac{ 4 A_{i+1}}{\pi} }\right) , 0
 \right) . \]
But the term $ (  \sqrt{ \sfrac{ 4 A_{i}}{\pi} } - 
\sqrt{ \sfrac{ 4 A_{i+1}}{\pi} }) $ is negative, and $\widetilde{D}_0 = 0$, so we find
\[ \widetilde{D}_i \le \sum_{j=0}^{i-1} \sqrt{ \sfrac{2(A_{j+1} - A_j)}{\pi} } \]
establishing the asserted bound.
\end{proof}

Recall the notation from (\ref{tauA1},\ref{area-exact}): 
 $(\tau_j, \ j \ge 0)$ denotes  a Poisson process of rate $1$ on 
$(0,\infty)$, and  $(\theta_j , j \ge 0)$ denotes  i.i.d. Exponential($1$) random variables
independent of $(\tau_i, \ i \ge 0)$. 
From (\ref{area-exact}) and Lemma \ref{L:DiA}, 
\begin{equation}
 D_i  \le \sqrt{ \sfrac{ 4 A_{i}}{\pi} } + \sqrt{ \sfrac{2}{\pi}} \  D^*_i 
 \label{DiA}
 \end{equation}
where
\[ A_i = \sum_{j=0}^{i-1} e^{\tau_j} \theta_j; \quad 
D^*_i  =  \sum_{j=0}^{i-1} e^{\tau_j /2} \theta_j^{1/2} . \]
To prove Proposition \ref{C:ld} we will rephrase  inequalities from
section \ref{sec:completingP1} in terms of continuous-time processes. 
Set $V_0 = 0$ and define $(V_t, 0 \le t < \infty)$ to be the process which increments by 
$\theta_j$ at time $\tau_j$, and otherwise decreases at exponential rate $1$.  
In symbols, writing $N^j_t = \indic_{\{ t \ge \tau_j\}}$, 
\[
dV_t = - V_t dt + \sum_{j \ge 0} \theta_j dN^j_t 
. \]
At time $\tau_i -$ (just before the jump at $\tau_i$) we have
\[ V_{\tau_i -} = \sum_{j = 0}^{i-1} \theta_j e^{\tau_j - \tau_i} 
= e^{-\tau_i} A_i . \] 
Because $V_t$ is decreasing on $[\tau_{i-1},\tau_i)$ we have 
\begin{equation}
 \mbox{ if } e^{-\tau_i}A_i > b_A \mbox{ then } V_t > b_A \mbox{ on } [\tau_{i-1},\tau_i) . 
 \label{VA1}
 \end{equation}
Similarly, define $(W_t, 0 \le t < \infty)$ to be the process which increments by 
$\theta^{1/2} _j$ at time $\tau_j$, and otherwise decreases at exponential rate $1/2$.  
Take $W_0 = 0$.
In symbols, 
\[
dW_t = -  \sfrac{1}{2} W_t dt + \sum_j \theta^{1/2}_j dN^j_t 
. \]
At time $\tau_i -$ we have
\[ W_{\tau_i -} = \sum_{j = 0}^{i-1} \theta^{1/2}_j e^{(\tau_j - \tau_i)/2} 
= e^{-\tau_i/2} D^*_i . \] 
So as at (\ref{VA1})
\begin{equation}
 \mbox{ if } e^{-\tau_i/2}D^*_i > b_D \mbox{ then } W_t > b_D \mbox{ on } [\tau_{i-1},\tau_i) . 
 \label{VA2}
 \end{equation}
Combining (\ref{DiA}) with (\ref{VA1}, \ref{VA2}) for appropriate $b_A, b_D$ defined in terms of $b$, we now see 
that the proof of  Proposition \ref{C:ld} reduces to proofs of large deviation bounds for occupation measures 
of the processes $(V_t)$ and $(W_t)$.  
That is, it suffices to prove 
\begin{Proposition}
For sufficiently large $b $ there exist $A < \infty$ and $\rho > 0 $ such that
\begin{eqnarray}
\Pr \left( \int_0^T \indic_{ \{V_t > b \}} \ dt  > T/6 \right) \le A \exp(-\rho T), \quad 0<T<\infty 
\label{ld2} \\
\Pr \left( \int_0^T \indic_{ \{W_t >  b \}} \ dt  > T/6 \right) \le A \exp(-\rho T), \quad 0<T<\infty 
\label{ld3}.
\end{eqnarray}
\end{Proposition}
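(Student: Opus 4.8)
The plan is to treat $(V_t)$ and $(W_t)$ as piecewise-deterministic Markov processes and to derive the occupation-time bounds from an explicit exponential (Lyapunov) supermartingale together with a Chernoff inequality. Both processes have the same structure. Between jumps $V$ decays at rate $1$, and it jumps up by an Exponential($1$) amount at the points of a rate-$1$ Poisson process, so that its generator acts on smooth $f$ by
\[ \mathcal{L}f(v) = -v f^\prime(v) + \int_0^\infty \big(f(v+\theta) - f(v)\big) e^{-\theta}\, d\theta , \]
while $W$ decays at rate $\tfrac12$ and jumps by $\theta^{1/2}$, giving the analogous generator with leading term $-\tfrac12 w f^\prime(w)$ and with jumps distributed as $\theta^{1/2}$ for $\theta$ Exponential($1$). (Intuitively the relevant fact, which these computations encode, is that $V$ has stationary law Exponential($1$), so $\{v>b\}$ carries stationary mass $e^{-b}<1/6$ for $b$ large; hence the occupation fraction $1/6$ should not typically be reached.) I will carry out the argument for $V$; the case of $W$ is identical up to the decay rate and the jump law, the latter having Gaussian-type tails so that all exponential moments are finite.

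The key computation is that $h(v)=e^{sv}$ is an approximate eigenfunction. For $0<s<1$ one finds $\mathcal{L}e^{sv} = e^{sv}\big(-sv + \tfrac{s}{1-s}\big)$, so that with $\kappa(s):=\tfrac{s}{1-s}$ (for $W$ one gets the same shape with $\kappa(s)=\Ex e^{s\theta^{1/2}}-1$, finite for all $s$) we have the pointwise bound
\[ \big(\mathcal{L} + \lambda\, \indic_{\{v > b\}}\big) e^{sv} \le \gamma\, e^{sv} \quad \text{for all } v \ge 0, \qquad \gamma := \kappa(s), \]
provided $\lambda \le s b$: on $\{v\le b\}$ the term $-sv\le 0$ gives $\mathcal{L}e^{sv}\le \kappa(s)e^{sv}$, and on $\{v>b\}$ the extra gain $\lambda$ is dominated by the drift since $\lambda \le sb \le sv$ (for $W$ the condition becomes $\lambda\le \tfrac12 sb$). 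The point is that $\kappa(s)/s \to \Ex[\theta]=1$ as $s\downarrow 0$, so for $b$ large enough (here $b>6$ for $V$, and $b>6\sqrt{\pi}$ for $W$) the interval $\lambda\in(6\kappa(s),\,sb]$ is nonempty for suitably small $s$; fix such $s$ and $\lambda$, so that $\gamma=\kappa(s)<\lambda/6$.

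Given this, set $M_t = \exp\!\big(-\gamma t + \lambda \int_0^t \indic_{\{V_r > b\}}\, dr\big)\, h(V_t)$. By the product rule for the piecewise-deterministic dynamics, the drift of $M$ equals $\exp(\cdots)\big[(\mathcal{L}+\lambda\indic_{\{V_t>b\}})h(V_t)-\gamma h(V_t)\big]$, which is nonpositive by the displayed inequality, so $M$ is a supermartingale; the requisite integrability is obtained by localizing at the exit times of $\{v>n\}$ (the process does not explode, having bounded jump rate and integrable jumps) and passing to the limit by Fatou. Hence $\Ex[M_T]\le M_0=h(0)=1$, and since $h\ge 1$ this yields $\Ex\exp\!\big(\lambda\int_0^T\indic_{\{V_t>b\}}\,dt\big)\le e^{\gamma T}$. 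Markov's inequality then gives
\[ \Pr\Big(\int_0^T \indic_{\{V_t > b\}}\, dt > T/6\Big) \le e^{-\lambda T/6}\,\Ex\exp\!\Big(\lambda\!\int_0^T \indic_{\{V_t>b\}}\, dt\Big) \le e^{-(\lambda/6 - \gamma)T}, \]
which is (\ref{ld2}) with $A=1$ and $\rho=\lambda/6-\gamma>0$; the bound (\ref{ld3}) for $W$ follows verbatim.

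I expect the only genuine work to lie in making the supermartingale step rigorous — namely the integrability and localization needed to upgrade the formal drift computation to $\Ex[M_T]\le M_0$ — and in confirming that the admissible parameter window for $(s,\lambda)$ is truly nonempty once $b$ is large; the generator computation for $e^{sv}$ and the final Chernoff estimate are routine.
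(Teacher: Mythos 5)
Your argument is correct, and it takes a genuinely different route from the paper. The paper proceeds by regeneration: it decomposes the trajectory of $(V_t)$ into i.i.d.\ cycles between successive downcrossings of the level $b$, controls the exponential moments of the occupation time above $b$ per cycle by dominating an excursion above $b$ by the busy period of an $M/M/1$ queue with service rate $b$ (quoting classical busy-period formulas), shows the below-$b$ portions of the cycles are long, and then applies the classical large deviation theorem for i.i.d.\ sums twice. You instead build the exponential supermartingale $M_t = \exp(-\gamma t + \lambda \int_0^t \indic_{\{V_r>b\}}dr)\,e^{sV_t}$ from the tilted generator $\mathcal{L}+\lambda\indic_{\{v>b\}}$, verify the pointwise Lyapunov inequality directly (your computation $\mathcal{L}e^{sv}=e^{sv}(-sv+\tfrac{s}{1-s})$ is right, as is the parameter window $6\kappa(s)<\lambda\le sb$, nonempty once $b>6/(1-s)$, and the analogous window for $W$ with threshold $6\sqrt{\pi}$ coming from $\Ex\,\theta^{1/2}=\sqrt{\pi}/2$), and conclude by Chernoff. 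Your route is more self-contained and quantitative: it yields $A=1$ and an explicit $\rho=\lambda/6-\gamma$, and explicit admissible values of $b$, whereas the paper's route leans on external queueing formulas and produces no explicit constants. The price is the localization step needed to upgrade the formal nonpositive drift to a true supermartingale inequality for a jump process with unbounded jump sizes; stopping at the first passage above level $n$ works because the overshoot is an independent Exponential($1$) (resp.\ $\theta^{1/2}$) variable with $\Ex e^{s(\cdot)}<\infty$ for $s<1$, so the stopped process is uniformly integrable and Fatou applies. That step, which you correctly flag as the only genuine work, is standard and closes the argument.
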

We will give the proof for $(V_t)$, and then note that essentially the same proof works for $(W_t)$.

Fix a high level $b$.
The process regenerates at each downcrossing of $b$. 
So starting from the first downcrossing there is an
 i.i.d. sequence $((L_b(i), K_b(i)), i \ge 1)$ where $L_b$ is the duration  and $K_b$ is the ``occupation time above $b$" between successive downcrossings. 
We can decompose $L_b$ as $L^\prime_b + K_b$ where 
$L^\prime_b$ is the time until first upcrossing of $b$ and 
$K_b$ is the subsequent time until the next downcrossing of $b$.
It is easy to see
\begin{equation}
L^\prime_b \to_p \infty \mbox{ as } b \to \infty .
\label{Lb1}
\end{equation}
It is also easy to see that $K_b \to_p 0$ as $b \to \infty$, 
though we need the stronger result  
\begin{equation}
\lim_{b \to \infty} \Ex \exp(\theta K_b) = 1, \quad 0 < \theta < \infty.
\label{Lb2}
\end{equation}
To prove this, note that during an excursion above $b$ the process $(V_t)$ is upper bounded by the process $(V^*_t)$ in
which the drift term is $-b \ dt$ instead of $- V_t \ dt$.  But the process $(V^*_t)$ describes the workload in a $M/M/1$ queue with arrival rate $1$ 
and service rate $b$.  
So the distribution of $K_b$ is stochastically smaller than the server's busy period in that queue, and from classical exact formulas for that busy period distribution
(e.g.  \cite{asmussen}) one can deduce (\ref{Lb2}).

Writing $\tau_n$ for the time of the $n$'th regeneration, (\ref{Lb2}) 
and the classical large deviation theorem for i.i.d. sums  imply that for $b$ sufficiently large
\[
\Pr \left( \int_0^{\tau_n} \indic_{ \{V_t > b \}} \ dt  > n/6 \right) 
= \Pr \left(\sum_{i=1}^n K_b(i) > n/6 \right) 
\mbox{ decreases exponentially } 
\]
as $n \to \infty$.
Another use of the i.i.d. large deviation theorem applied to the time intervals $L^\prime_b$  at (\ref{Lb1}) implies that for $b$ sufficiently large the probabilities 
$\Pr( \tau_n < n)$  decrease exponentially as $n \to \infty$, and this establishes (\ref{ld2}).

The argument for (\ref{ld3}) is essentially the same, and this
completes the proof of Proposition  \ref{C:ld}.

\section{Coalescence of lines of descent}
\label{sec:joint}
In this section we continue the style of analysis in section \ref{sec:part1} by 
studying the lines of descent of two particles present at time $0$.
This involves a coupled EA process, whose dynamics are described in section \ref{sec:coupled}. 
Note that Proposition \ref{P1} implies that, for particles at distance $r  \gg 1$ apart, 
the ``coalesce" time (time backwards to their most recent common ancestor) must be at least 
$(2 - o(1)) \log r$.
Our goal is to give an upper bound, Proposition \ref{Pcouple}, on the coalesce time distribution.
The central idea is to 
 use Proposition \ref{C:ld} to show that, if not coalesced already, the lines of descent at time $-t$ are typically only order $e^{t/2}$ apart 
(the same order as the distance to the nearest time $-t$ particle): this is Proposition \ref{P:diamcoup}. 
A geometric argument then shows (Lemma \ref{L:coal})  
that there is a non-vanishing chance to merge in the next generation backwards. 
These ingredients are combined in section \ref{sec:ppcouple} to prove Proposition \ref{Pcouple}.

\subsection{The coupled EA process}
\label{sec:coupled}
Fix $t_0 \ge 0$.
For the rest of section \ref{sec:joint} we condition on the time-$t_0$ configuration $\ZZ_{\le t_0}$ 
containing a particle at position $z^1_0$ and  the time-$0$ configuration $\ZZ_{\le 0}$ 
containing a particle at position $z^2_0$.
The distribution of the line of descent for each particle is just a translated and scaled version of
the distribution of  the EA process in Lemma \ref{L:line_descent}.
So we anticipate that the joint distribution of the two lines of descent can be described in terms of some suitably coupled 
versions of the EA process.

Precisely, we will specify the {\em coupled EA process} 
\[
((\bC^1_i; \bC^2_i),   \ i = 0,1,2,\ldots )   = ((C^1_i,z^1_i,\tau^1_i); (C^2_i,z^2_i,\tau^2_i)), \ i = 0,1,2,\ldots )
\]
with initial states $(\{z^1_0\}, z^1_0,\tau^1_0)$  and 
 $(\{z^2_0\}, z^2_0, \tau^2_0)$ 
where $\tau^2_0$ and $\tau^1_0 + t_0$ are independent Exponential(1).   
At each step (before the coalescence step $\Icouple$ below) only one of the components 
($\bC^1_i$ or $\bC^2_i$) changes. 
There are notational issues in describing this coupled processes.  
We write
$(\bC^1_i; \bC^2_i)$ for the configuration after the $i$'th step of the coupled process. 
Because only one component changes in each step before coalescence, we need different notation for the configuration of a given component after $j$ changes 
of that particular component, and we write
$(\bC^1_{(j)})$ and $(\bC^2_{(j)})$ for these ``jump processes" of each component. 
And it is these jump processes  which individually are evolving as the ordinary EA process.

The evolution rule for the coupled process, which will be derived from
the dynamics of the underlying tree process as was done in Lemma \ref{L:line_descent},
is as follows.

\begin{quote}
 Write the configuration after $i$ steps as $(\bC^1_i; \bC^2_i)$. 
Before step $\Icouple$ we must have $\tau_i^1 \ne \tau^2_i$; 
 suppose $\tau_i^1 < \tau^2_i$ (the other case is symmetric).
Take a PPP of rate $e^{-\tau^1_i}$ on 
$\Reals^2 \setminus (C^1_i \cup C^2_i)$, but augmented with an extra point planted
at $z^2_i$.  
Let $\xi$ be the point of the augmented PPP closest to $z^1_i$.  
If $\xi = z^2_i$ then we say that the process coalesces at
position $z^2_i$ and time $\tau^2_i$; 
write $\Icouple = i+1$ for the coalesce step, 
$\Zcouple = z^2_i$ for the coalesce position,
$\Tcouple = \tau^2_i$ for the coalesce time.
Otherwise set $C^1_{i+1} = C^1_i \cup \disc(z^1_i, ||\xi - z^1_i||)$, 
set $z^1_{i+1} = \xi$ and take 
$\tau^1_{i+1} = \tau^1_i + \theta$
 where $\theta$ has Exponential(1) 
distribution independent of all previously constructed random variables.  
Set $ \bC^2_{i+1} =  \bC^2_i$.
\end{quote}

\noindent
Note in particular that the  configuration after $i$ steps determines the value
\begin{equation}
\tau_i := \min(\tau^1_i, \tau^2_i);
\end{equation}
 the {\em arg min} determines which component will change on the next step
and $\tau_i$ determines the rate $e^{- \tau_i}$  of the PPP used to construct the next step.

{\bf Remark.} 
For completeness, let us give the behavior of the coupled process after coalescence, though this is not directly relevant to our arguments. 
If the coalesce step is
 $i+1$ as above, then $z^1_{i+1} = z^2_{i+1}$ and 
$\tau^1_{i+1} = \tau^2_{i+1}$ but maybe $C^1_{i+1} \neq C^2_{i+1}$.  In  subsequent steps $k$ we use the same PPP outside 
$C^1_k \cup C^2_k$ for each component, and therefore we have 
$(\tau^1_k, z^1_k) = (\tau^2_k,z^2_k)$ for all $k \ge i+1$.
Each of the two component jump processes does evolve as the EA process, except that for the first component process there is the extra planted point at $z^2_0$.  But this extra point
 only comes into play if it is the exact point of coalescence, 
and so does not affect our arguments for upper bounding the 
coalesce time.

\medskip \noindent
A realization of six initial steps of the coupled process is illustrated in Figure \ref{Fig:coupled1}.
On the left are the successive positions 
$z^1_0, z^1_{(1)}, z^1_{(2)}, z^1_{(3)}, z^1_{(4)}$ 
in the first component process, and on the right are the positions 
in the second component process. 
The associated times for the first process are 
$ -t_0 < \tau^1_0 =  \tau^1_{(0)} < \tau^1_{(1)} < \tau^1_{(2)} <  \tau^1_{(3)} <  
 \tau^1_{(4)} $ 
and for the second process are 
$0 < \tau^2_0 = \tau^2_{(0)} < \tau^2_{(1)} < \tau^2_{(2)} $.
Suppose that the times associated with these steps are ordered as
\[ -t_0 < \tau^1_{(0)} < \tau^1_{(1)} < \tau^1_{(2)} < \tau^2_{(0)} < \tau^1_{(3)} < \tau^2_{(1)} . \]
In terms of steps $i$ of the coupled process (indicated in the figure as $a, b, c, d, e, f$)
we have

\medskip
\begin{center}
$\begin{array}{cccccccc}
i & 0 & a1 & b2 & c3 & d4 & e5 & f6 \\
z^1_i = & z^1_{0} & z^1_{(1)} & z^1_{(2)}   & z^1_{(3)}  & z^1_{(3)}  & z^1_{(4)}  & z^1_{(4)} \\
z^2_i = & z^2_0 & z^2_0 & z^2_0 & z^2_{0} & z^2_{(1)} & z^2_{(1)}  & z^2_{(2)} \\
\tau^1_i = & \tau^1_0 & \tau^1_{(1)} & \tau^1_{(2)}   & \tau^1_{(3)}  & \tau^1_{(3)}  & \tau^1_{(4)}  & \tau^1_{(4)} \\
\tau^2_i = & \tau^2_0 & \tau^2_0 & \tau^2_0 & \tau^2_{0} & \tau^2_{(1)} & \tau^2_{(1)} & \tau^2_{(2)} \\
\tau_i = & \tau^1_0 & \tau^1_{(1)} & \tau^1_{(2)}   & \tau^1_{0} & \tau^1_{(3)}  & \tau^1_{(1)} & ??
\end{array}$
\end{center}

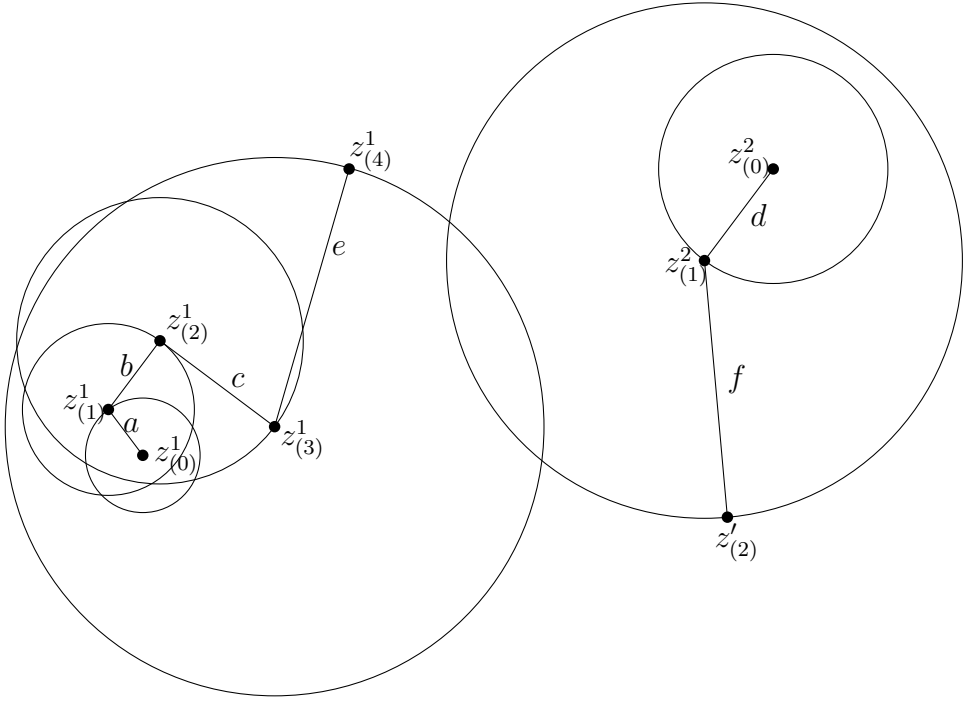
\begin{figure}
\caption{Initial steps of the coupled EA process.}
\label{Fig:coupled1}
\setlength{\unitlength}{0.06in}
\begin{picture}(50,68)(-10,-25)
\put(0,0){\circle*{1}}
\put(0,0){\circle{10}}
\put(-3,4){\circle*{1}}
\put(-3,4){\circle{15}}
\put(1.5,10){\circle*{1}}
\put(1.5,10){\circle{25}}
\put(11.5,2.5){\circle*{1}}
\put(11.5,2.5){\circle{47}}
\put(18,25){\circle*{1}}
\put(1,-0.5){$z^1_{(0)}$}
\put(-7,4){$z^1_{(1)}$}
\put(2,11){$z^1_{(2)}$}
\put(12,0.9){$z^1_{(3)}$}
\put(18,26.1){$z^1_{(4)}$}
\put(0,0){\line(-3,4){3}}
\put(-3,4){\line(3,4){4.5}}
\put(1.5,10){\line(4,-3){10}}
\put(11.5,2.5){\line(6.5,22.5){6.5}}
\put(55,25){\circle*{1}}
\put(55,25){\circle{20}}
\put(49,17){\circle*{1}}
\put(49,17){\circle{45}}
\put(51,25.5){$z^2_{(0)}$}
\put(45.5,16){$z^2_{(1)}$}
\put(51,-5.4){\circle*{1}}
\put(49.9,-8){$z^\prime_{(2)}$}
\put(55,25){\line(-3,-4){6}}
\put(49,17){\line(2,-22.4){2}}
\put(51,6){$f$}
\put(53,20){$d$}
\put(16.5,17.5){$e$}
\put(7.7,6){$c$}
\put(-2,6.9){$b$}
\put(-1.7,2){$a$}
\end{picture}
\end{figure}

We will now relate how this description of the coupled EA process arises as the description of the two lines of descent 
within the tree process for the two given points of $\ZZ_{\le t_0}$  and  $\ZZ_{\le 0}$  at positions $z^1_0$ and $z^2_0$.
Consider Figure \ref{Fig:coupled1}.
Inductively, we have traced back the two lines of descent to 
$(-\tau^1_{(4)},z^1_{(4)})$ and $(-\tau^2_{(2)},z^2_{(2)})$, using $6$ steps of the coupled process.
What happens next depends on which of $\tau^1_{(4)}$ or $\tau^2_{(2)}$ (that is, which of $\tau^1_6$ or $\tau^2_6$) 
is smaller.  
Taking the case $\tau^1_{(4)} < \tau^1_{(2)}$ 
(the other case is symmetric) then, 
to find the parent of $(-\tau^1_{(4)},z^1_{(4)})$ in the tree process,
 we need to search the region where vertices may have arrived before 
$-\tau^1_{(4)}$; this excludes both $C^1_{(4)}$ and the interior of 
$C^2_{(2)}$, because the latter contains no particles arriving before 
$-\tau^2_1$, and we must have $\tau^2_{(1)} < \tau^1_{(4)}$ because 
of the rule that the component with smaller $\tau$-value expanded first.
However the particle at $z^2_{(2)}$ arrived at time $-\tau^2_{(2)}$, 
which was before time
$-\tau^1_{(4)}$, and so is eligible to be the parent of $(-\tau^1_{(4)},z^1_{(4)})$.
So the parent of $(-\tau^1_{(4)},z^1_{(4)})$ is the closest particle to $z^1_{(4)}$ in the 
Poisson process $\ZZ_{ \le - \tau^1_{(4)}}$, which has rate $e^{-\tau^1_{(4)}}$, on the complement 
of $C^1_{(4)} \cup C^1_{(2)}$, or is $z^1_{(2)}$ if that particle is closer. 
In the latter case the two lines of descent coalesce at 
$(-\tau^1_{(2)},z^1_{(2)})$.

In terms of steps $i$ of the coupled process, $\tau^1_{(4)} = \tau^1_6$ and
$C^1_{(4)} \cup C^2_{(2)} =  C^1_6 \cup C^2_6$ and 
$(-\tau^2_{(2)},z^2_{(2)}) = (-\tau^2_6,z^2_6)$.
This completes the derivation of the evolution rule stated 
in section \ref{sec:coupled}.

To summarize:
\begin{Lemma}
\label{L:line_descent_2}
Condition on $\bm{\Xi}$ containing  particles $\xi^1$ and $\xi^2$ with $t_{\xi^1} \le t_0, \ t_{\xi^2} \le 0$ and $(z_{\xi ^1}, z_{\xi^2}) = (z^1_0, z^2_0)$.  
The joint process 
\[((t_{\parent[i,\xi^1]}, z_{\parent[i,\xi^1]},   \  t_{\parent[i,\xi^2]}, z_{\parent[i,\xi^2]}), 0 \le i < \infty)\]
 of arrival times and positions of the ancestors of these particles 
is distributed as the random process 
$((-\tau^1_{(i)}, Z^1_{(i)},  \  -\tau^2_{(i)}, Z^2_{(i)}),  0 \le i < \infty)$ within
the coupled EA process 
$((C^1_i,Z^1_i,\tau^1_i); ( C^2_i,Z^2_i,\tau^2_i) , \   0 \le i < \infty)$ with initial states 
$(\{z^1_0\}, z^1_0, \tau^1_{(0)})$ and  $(\{z^2_0\}, z^2_0, \tau^2_{(0)})$
where  $\tau^1_{(0)} + t_0$ and  $ \tau^2_{(0)}$ are independent with Exponential(1) distribution.
\end{Lemma}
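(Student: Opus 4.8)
The plan is to establish Lemma \ref{L:line_descent_2} by the same induction-on-steps argument that proved the single-particle Lemma \ref{L:line_descent}, but now tracking both lines of descent simultaneously and carefully bookkeeping which one advances at each step. The essential point is that a step of the coupled process always advances the \emph{younger} of the two current ancestors (the one with the smaller $\tau$-value, equivalently the one born more recently in real time $-\tau$); this is forced by the structure of the underlying tree process, since to locate $\parent(\xi)$ we search only among particles born strictly before $\xi$, so the search that happens next is the one for whichever traced-back particle currently has the larger birth time.

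First I would set up the induction hypothesis: after $i$ steps, conditional on $\stackrel{\leftarrow}{\FF}_{-\tau_i}$ (the reversed-time filtration from (\ref{f-back})), the only information the tree process has revealed about $\Xi_{<-\tau_i}$ is that it contains no particles in the interior of the excluded region $C^1_i \cup C^2_i$, together with the known presence of the already-traced ancestor of the \emph{other} component at its position. Assuming $\tau^1_i < \tau^2_i$ (the symmetric case is identical), the particle to be advanced is the component-$1$ ancestor at $(-\tau^1_i, z^1_i)$, and I would verify that its parent is exactly the nearest point to $z^1_i$ among (i) a PPP of rate $e^{-\tau^1_i}$ on $\Reals^2 \setminus (C^1_i \cup C^2_i)$, and (ii) the planted point at $z^2_i$. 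Point (ii) is eligible precisely because $z^2_i$'s particle was born at $-\tau^2_i < -\tau^1_i$, hence qualifies as a potential parent, while every other already-revealed particle is too young. If the planted point wins, the two lines of descent share a common ancestor, which is exactly the coalescence event defining $\Icouple$, $\Zcouple = z^2_i$, $\Tcouple = \tau^2_i$. The Exponential(1) distribution of the new $\tau$-increment follows, as in the single-particle case, from the fact that the arrival-time density is $\propto e^t$ on $(-\infty, -\tau^1_i)$, giving $\tau^1_{i+1} - \tau^1_i$ an Exponential(1) law independent of the spatial search, by Lemma \ref{L:rev}.

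The main obstacle is the notational and logical care needed to keep the ``jump processes'' $\bC^1_{(j)}, \bC^2_{(j)}$ (indexed by advances of a single component) aligned with the coupled-process steps $\bC^1_i, \bC^2_i$ (indexed by all steps), and to confirm that the exclusion region genuinely accumulates as $C^1_i \cup C^2_i$ rather than something more complicated. The delicate claim to verify is that the \emph{interior} of the other component's region $C^2_i$ is correctly treated as empty of eligible particles: this holds because $C^2_i$ was built from discs certified empty of particles born before $-\tau^2_{(j)}$ for the relevant $j$, and since $\tau^2_{(j)} \le \tau^2_i$ while we are searching strictly before $-\tau^1_i > -\tau^2_i$, every such disc interior is indeed vacated over the relevant time window; the only surviving ``occupant'' of $C^2_i$ relevant to the present search is the boundary point $z^2_i$ itself, which is exactly the planted point. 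Once this emptiness bookkeeping is pinned down, the evolution rule of section \ref{sec:coupled} matches the tree-process dynamics step for step, and the worked example of Figure \ref{Fig:coupled1} serves to confirm the indexing translation; the distributional identity in the Lemma then follows by the same formalization remark that justified Lemma \ref{L:line_descent}.
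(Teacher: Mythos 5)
Your proposal follows the paper's own derivation essentially step for step: the component whose current ancestor has the smaller $\tau$-value advances next; its parent is the nearest point to $z^1_i$ of a rate-$e^{-\tau^1_i}$ PPP on the complement of $C^1_i \cup C^2_i$ augmented by the planted point $z^2_i$ (eligible because it was born at $-\tau^2_i < -\tau^1_i$); coalescence is the event that the planted point wins; and the $\tau$-increments are Exponential(1) by the thinning property. The one place where your write-up does not quite close is the justification of the claim you yourself single out as delicate, namely that the interior of the other component's region $C^2_i$ contains no points of $\Xi_{< -\tau^1_i}$. You argue from ``$\tau^2_{(j)} \le \tau^2_i$'' together with ``$\tau^1_i < \tau^2_i$'', but these two inequalities do not yield the one actually needed, which is $\tau^2_{(j)} \le \tau^1_i$ for every index $j$ at which a disc of $C^2$ was certified empty: a disc certified empty of $\Xi_{<-\tau^2_{(j)}}$ is only guaranteed empty of $\Xi_{<-\tau^1_i}$ when $-\tau^1_i \le -\tau^2_{(j)}$, and your chain leaves open the possibility $\tau^1_i < \tau^2_{(j)} \le \tau^2_i$. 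The needed inequality is true for a different reason: the certification times of the discs of $C^2_i$ are $\tau^2_{(0)},\dots,\tau^2_{(J-1)}$ where $J$ is the current jump count of component $2$, and $\tau^2_{(J-1)} < \tau^1_i$ because component $2$ last advanced at a coupled step when its $\tau$-value was the minimum, hence strictly smaller than the then-current (and therefore also the current) $\tau^1$-value. This is precisely the observation the paper makes in its worked example (``we must have $\tau^2_{(1)} < \tau^1_{(4)}$ because of the rule that the component with smaller $\tau$-value expanded first''); with that substitution your argument coincides with the paper's.
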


As mentioned before, we write $\Icouple$ for the first step $I$ such that 
 $z^1_I = z^2_I$ (and call that point $\Zcouple$), or equivalently 
for the first step $I$ such that 
 $\tau^1_I = \tau^2_I$ (and call that time $\Tcouple$).
 We can now state the main result of section \ref{sec:joint}.
 \begin{Proposition}
\label{Pcouple}
There exist constants $K, \beta < \infty$ and $\rho > 0$ such that,
in the coupled EA process above, for any $(z^1_0, z^2_0)$ and any $t_0 \ge 0$,
\begin{equation}
\Pr(\Tcouple > t) \le K \exp(-\rho t) \mbox{ for all }
t > \beta \log^+ || z^1_0 -  z^2_0 || .
\label{Tcoal}
\end{equation}
\end{Proposition}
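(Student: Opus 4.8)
The plan is to prove Proposition \ref{Pcouple} by showing that, as long as the two lines of descent have not yet coalesced, there is at each ``backwards time'' a uniformly positive chance that they coalesce within the next step, so that the coalesce time is dominated by a geometric/exponential tail. The geometric reason coalescence can happen is that when the smaller-$\tau$ component (say component $1$ at position $z^1_i$) seeks its parent, the planted point $z^2_i$ of the other component is a candidate; coalescence occurs precisely when $z^2_i$ is the nearest point of the augmented PPP to $z^1_i$. The probability of this event is governed by how far apart $z^1_i$ and $z^2_i$ are relative to the typical nearest-neighbor scale $e^{\tau_i/2}$, and by whether $z^2_i$ is ``exposed'' (not shielded by the excluded regions $C^1_i, C^2_i$).

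The first key step is to control the separation of the two lines of descent. I would define $R_t$ to be the distance $\|z^1 - z^2\|$ between the two current ancestor positions at backwards-time $t$ (before coalescence), and prove Proposition \ref{P:diamcoup}: that if coalescence has not occurred by time $t$, then $R_t = O(e^{t/2})$ with high probability. This is where I expect to lean hardest on Proposition \ref{C:ld}. The point is that each component's displacement from its starting position is controlled by $\diam(C_i)$, and Proposition \ref{C:ld} tells us that the ``bad'' steps on which $e^{-\tau_i/2}\diam(C_i)$ exceeds a fixed level $b$ occupy at most (say) a third of any long time interval, except with exponentially small probability. Combining the two components and the triangle inequality, one controls $R_t$ in terms of the initial separation $\|z^1_0 - z^2_0\|$ and the accumulated displacements, yielding that $e^{-t/2}R_t$ is bounded above by a fixed constant outside an exponentially small event, provided $t$ exceeds a constant multiple of $\log^+\|z^1_0 - z^2_0\|$ — which is exactly the range of $t$ in \eqref{Tcoal}.

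The second key step is a geometric one-step lemma (Lemma \ref{L:coal}): conditional on the current coupled configuration $(\bC^1_i;\bC^2_i)$ with $e^{-\tau_i/2}\diam(C^1_i \cup C^2_i) \le b$ and $e^{-\tau_i/2}\|z^1_i - z^2_i\| \le b'$ for fixed constants, the probability that the next step is a coalescence step is bounded below by some $p_0 = p_0(b,b') > 0$. The mechanism: the nearest-neighbor distance in a rate-$e^{-\tau_i}$ PPP is of order $e^{\tau_i/2}$, so with probability bounded away from zero there is no point of $\widetilde\Xi$ nearer to $z^1_i$ than the planted point $z^2_i$ is; the excluded region $C^1_i \cup C^2_i$ only helps (removing competitors), and one must check that $z^2_i$ is not itself buried inside the excluded region in a way that makes it ineligible, which follows because $z^2_i$ lies on the boundary of $C^2_i$ by construction. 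Quantitatively this is a Poisson void-probability estimate over a disc of radius comparable to $\|z^1_i - z^2_i\|$, giving $p_0 \ge \exp(-c\,(b')^2)$.

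With these two ingredients, the final step is a renewal/large-deviation assembly carried out in section \ref{sec:ppcouple}. On the event (of all-but-exponentially-small probability) that the diameter-and-separation control from step one holds throughout $[0,t]$, the number of steps up to backwards-time $t$ is of order $t$ (since the $\tau^1,\tau^2$ increments are Exponential$(1)$, a rate-$1$ Poisson flow), and on a positive fraction of these steps the one-step coalescence probability is at least $p_0$; a comparison with a geometric random variable then forces $\Pr(\Tcouple > t)$ to decay exponentially in $t$. Intersecting with the exponentially small exceptional event from Proposition \ref{C:ld} preserves the exponential bound and yields \eqref{Tcoal} with suitable $K,\rho,\beta$. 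I expect the main obstacle to be step one: making the crude statement ``$R_t = O(e^{t/2})$'' into a clean high-probability bound that is \emph{uniform} over all initial separations and all $t_0 \ge 0$, since the two component clocks $\tau^1,\tau^2$ run asynchronously and the excluded regions of the two components interact; reconciling the step-indexed estimates of Proposition \ref{C:ld} with the continuous backwards-time parametrization, while keeping the large-deviation rates uniform, is the delicate part.
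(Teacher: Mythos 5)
Your plan is correct and follows essentially the same route as the paper: controlling the two excluded-region diameters via Proposition \ref{C:ld} so that the current separation is $O(e^{t/2})$ once $t$ exceeds a multiple of $\log^+\|z^1_0-z^2_0\|$ (this is the content of Proposition \ref{P:diamcoup}), a Poisson void-probability lower bound $p_0>0$ for coalescence at a ``good'' step (Lemma \ref{L:coal}), and a geometric/stopping-time assembly (Lemma \ref{L:SF} and section \ref{sec:ppcouple}). You also correctly single out the genuinely delicate point, namely converting the time-indexed occupation bound of Proposition \ref{C:ld} into a count of good steps for the two asynchronous component clocks, which is exactly what the paper's Lemmas \ref{L:rate2} and \ref{L:rate3} are for.
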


 Heuristically we expect $|| \Zcouple - z^1_0|| \asymp \exp( \Tcouple /2)$ in the tails, and so tail behavior of the form
 $\Pr(\Tcouple > t) \asymp \exp(- \gamma t/2)$ 
 would be equivalent to tail behavior of the form 
$\Pr(  || \Zcouple ||  > r) \asymp r^{- \gamma}$.
 We conjecture the latter is true; precisely, that the exponent
 \begin{equation}
 \gamma := -  \lim_{r \to \infty} \frac{\log \Pr(  || \Zcouple ||  > r)}{\log r} 
 \label{gamma:conj}
 \end{equation}
 exists and does not depend on $|| z^1_0 -  z^2_0 || $ or $t_0$.
 This is closely related to the ``are boundaries fractal?" issue, 
 as will be discussed in section \ref{sec:2heuristics}.

{\em For ease of exposition we will present the proof of Proposition \ref{Pcouple} in the case $t_0 = 0$. 
The general case requires only minor modifications, noted below.}

\subsection{The coalescence step}
\label{sec:coalesce}

Here we will give conditions to ensure a non-vanishing probability of coalescing at the next step.
This requires only a simple geometric lemma.
\begin{Lemma}
\label{Lzz}
Let $z_1, z_2 \in \Reals^2$, and let $r, \lambda > 0$.  Let $\ZZ^\lambda$ be a Poisson point process of 
rate $\lambda$ on $\Reals^2$.  Then the event

the nearest point $z \in \ZZ^\lambda$ to $z_1$ is also the nearest point to $z_2$, 
and 

$\min (||z - z_1||, ||z - z_2||) > r$

\noindent
has probability at least
\begin{equation}
\exp(- \lambda \pi (r + ||z_2 - z_1||)^2) - 2 c_0 \lambda^{1/2} ||z_2 - z_1||\label{couple1}
\end{equation}
for a certain absolute constant $c_0$.
\end{Lemma}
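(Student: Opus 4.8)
The plan is to lower bound the probability by restricting to a convenient sub-event. Write $d = ||z_2 - z_1||$ and let $z$ denote the point of $\ZZ^\lambda$ nearest to $z_1$ (a.s.\ unique). I would restrict to the sub-event on which $z$ is \emph{also} the nearest point to $z_2$ and, moreover, $||z - z_1|| > r + d$. On this sub-event $||z - z_2|| \ge ||z - z_1|| - d > r$ automatically, so the required condition $\min(||z - z_1||, ||z - z_2||) > r$ holds. Hence it suffices to lower bound the probability that the nearest point to $z_1$ is at distance $> r+d$ and is simultaneously the nearest point to $z_2$.

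To evaluate this I would use the elementary description of the nearest point of a PPP together with the independence of a Poisson process over disjoint regions. The intensity for the nearest point to $z_1$ to lie at $w$ is $\lambda e^{-\lambda \pi ||w - z_1||^2}\,dw$ (a point at $w$, and $\disc(z_1, ||w-z_1||)$ empty). Given such a $w$, the further requirement that $w$ be nearest to $z_2$ as well amounts to emptiness of $\disc(z_2, ||w - z_2||) \setminus \disc(z_1, ||w - z_1||)$, of conditional probability $e^{-\lambda A(w)}$ where $A(w) := \area\big(\disc(z_2, ||w - z_2||) \setminus \disc(z_1, ||w - z_1||)\big)$. Thus the target probability equals
\[ \int_{||w - z_1|| > r+d} \lambda\, e^{-\lambda \pi ||w - z_1||^2}\, e^{-\lambda A(w)} \, dw . \]
Since $\int_{||w-z_1|| > r+d} \lambda e^{-\lambda\pi ||w-z_1||^2}\,dw = e^{-\lambda\pi(r+d)^2}$ is exactly the probability that $\disc(z_1, r+d)$ is empty, subtracting and using $1 - e^{-x} \le x$ reduces the whole claim to showing that $\int_{||w-z_1||>r+d} \lambda^2 e^{-\lambda\pi||w-z_1||^2} A(w)\,dw \le 2 c_0 \lambda^{1/2} d$.

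The crux is a clean bound on $A(w)$, and this is the step I expect to be the main obstacle. Writing $\rho_i = ||w - z_i||$, both bounding circles pass through $w$ and the triangle inequality gives $\rho_1 \le \rho_2 + d$, so $\disc(z_2, \rho_2) \subseteq \disc(z_1, \rho_2 + d)$ and therefore the difference set is contained in an annulus about $z_1$:
\[ A(w) \le \pi\big((\rho_2 + d)^2 - \rho_1^2\big) \le \pi(4 d \rho_1 + 4 d^2), \]
the last step using $\rho_2 \le \rho_1 + d$. This annulus containment is what makes $A(w)$ linear in $d$ to leading order, and hence produces the $\lambda^{1/2}d$ scaling of the correction. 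Substituting and passing to polar coordinates about $z_1$ (extending the radial integral down to $0$, as the integrand is nonnegative) leaves the elementary Gaussian-type integrals $\int_0^\infty e^{-\lambda\pi\rho^2}\rho\,d\rho$ and $\int_0^\infty e^{-\lambda\pi\rho^2}\rho^2\,d\rho$, yielding a bound of the form $2\pi\lambda^{1/2}d + 4\pi\lambda d^2$.

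Finally I would dispose of the $\lambda d^2$ term. If $\lambda^{1/2} d \ge 1$ then the asserted lower bound $e^{-\lambda\pi(r+d)^2} - 2c_0\lambda^{1/2}d$ is $\le 0$ whenever $c_0 \ge \tfrac12$, and there is nothing to prove; while if $\lambda^{1/2}d < 1$ then $\lambda d^2 = (\lambda^{1/2}d)^2 \le \lambda^{1/2}d$, so the whole correction is $O(\lambda^{1/2}d)$. Thus a single absolute constant $c_0$ (e.g.\ $c_0 = 3\pi$) suffices, completing the argument. Everything apart from the area bound is routine PPP bookkeeping and calculus; the geometric estimate on $A(w)$ is the only genuinely delicate point.
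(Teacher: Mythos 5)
Your argument is correct, and it takes a genuinely different route from the paper's. The paper works with two auxiliary events: (A) that the nearest point of $\ZZ^\lambda$ to $z_1$ is at distance at least $r+d$, and (B) that the two smallest distances from $z_2$ to points of $\ZZ^\lambda$ differ by at most $2d$; a triangle-inequality argument shows that (A) together with the failure of (B) forces the event of the lemma, and $\Pr(B)$ is bounded by $2c_0\lambda^{1/2}d$ via the (asserted, not computed) fact that the spacing $D^{(1)}_2 - D^{(1)}_1$ has a bounded density, with $c_0$ left implicit. You instead compute the probability of the sub-event ``the nearest point to $z_1$ lies at distance $>r+d$ and is also nearest to $z_2$'' exactly by the Mecke/Campbell formula, and control the correction term through the geometric bound $\area\bigl(\disc(z_2,\rho_2)\setminus\disc(z_1,\rho_1)\bigr)\le\pi(4d\rho_1+4d^2)$ coming from the annulus containment $\disc(z_2,\rho_2)\subseteq\disc(z_1,\rho_2+d)$; the resulting polar integrals give $2\pi\lambda^{1/2}d+4\pi\lambda d^2$, and the quadratic term is absorbed by the case split on whether $\lambda^{1/2}d\ge 1$. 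All steps check out: the sub-event is indeed contained in the target event (since $\|z-z_2\|\ge\|z-z_1\|-d>r$), the Mecke identity applies because the two emptiness requirements concern disjoint regions, and the annulus bound is justified by the triangle inequality in both directions. What your approach buys is full explicitness --- an exact integral representation and a concrete constant $c_0=3\pi$ --- at the cost of slightly more computation; what the paper's buys is brevity, at the cost of delegating the key quantitative input to an unproved density estimate with an unspecified constant.
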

\begin{proof}
Write $d = ||z_2 - z_1||$.  
Consider the events

(A): the distance from $z_1$ to the nearest point of $\ZZ^\lambda$ is at least 
$r+d$; 

(B): the distances $D_1^{(\lambda)}$ and  $D_2^{(\lambda)}$ 
 from $z_2$ to the nearest two points of $\ZZ^\lambda$ are 
such that
$D_2^{(\lambda)} - D_1^{(\lambda)} \le 2d$.

\medskip
\noindent
We assert that, in order that the event in Lemma \ref{Lzz} occurs, it is sufficient that the event
(A) occurs and the event (B) does not occur. 
To prove this assertion, let $z^\prime_1$ be the closest point of $\ZZ^\lambda$ to $z_1$, and  
let $z^\prime_2$ be the closest point of $\ZZ^\lambda$ to $z_2$. 
Suppose $z^\prime_2 \neq z^\prime_1$.
By the triangle inequality
\[ ||z_1 - z^\prime_2|| \le d + ||z_2 - z^\prime_2|| 
\mbox{ and } 
||z_2 - z^\prime_1|| \le d + ||z_1 - z^\prime_1|| . \]
Because $z^\prime_1$ is the closest point to $z_1$
\[ ||z_1 - z^\prime_1|| < ||z_1 - z^\prime_2|| . \]
Combining these three inequalities leads to
\[ ||z_2 - z^\prime_1|| - ||z_2 - z^\prime_2|| < 2d . \]
So if (B) fails then then $z^\prime_1 = z^\prime_2 = z$ say.
And so if (A) holds then, by the triangle inequality, 
$\min (||z - z_1||, ||z - z_2||) > r$.

The probability $\Pr(A)$  equals the first term
in (\ref{couple1}).  It is easy to check that 
$D_2^{(1)} - D_1^{(1)}$  has a density bounded by some $c_0$, so (by scaling) the density of 
$D_2^{(\lambda)} - D_1^{(\lambda)}$ is bounded by $c_0 \lambda^{1/2}$. 
So $\Pr(B)$ is at most $2 c_0 \lambda^{1/2}d$. 
\end{proof}

We now apply this to the coalescence step.
\begin{Lemma}
\label{L:coal}
Consider a state $(\bc^1,\bc^2) = 
((C^1,z^1,\tau^1), (C^2,z^2,\tau^2))$
of the coupled EA process started at $z^1_0$ and 
$z^2_0$.
Suppose $\tau^1 < \tau^2$.  
Write 
$ \bar{\Delta} = \max(\diam(C^1), \diam(C^2)$.
Then the probability that the process coalesces at the next step
is at least
\begin{equation}
\exp(- 4\pi e^{-\tau^1}  (||z^1_0 - z^2_0||+2\bar{\Delta})^2)
- 2 c_0 || z^1_0 - z^2_0 || e^{-\tau^1/2}  .
\label{pitz}
\end{equation}
\end{Lemma}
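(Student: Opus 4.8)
The plan is to apply Lemma \ref{Lzz} directly to the coalescence step, with the right identifications of the parameters. In the coupled EA process with $\tau^1 < \tau^2$, the next step searches for the closest point to $z^1$ in a PPP of rate $\lambda = e^{-\tau^1}$ on $\Reals^2 \setminus (C^1 \cup C^2)$, augmented by a planted point at $z^2$. Coalescence occurs exactly when the planted point $z^2$ turns out to be the closest. The key observation is that Lemma \ref{Lzz} gives a lower bound on the probability that a single (unaugmented) PPP has its nearest point to $z^1$ coincide with its nearest point to $z^2$ \emph{and} that this common nearest point lies at distance $> r$ from both. I would choose $r$ to be at least $\bar\Delta = \max(\diam(C^1), \diam(C^2))$, so that this common nearest point necessarily falls outside $C^1 \cup C^2$; then on this event, the point $z^2$ is at least as close to $z^1$ as any genuine point of the PPP outside the excluded region, forcing coalescence.

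First I would set $d := ||z^1 - z^2||$ and relate it to the fixed parameter $||z^1_0 - z^2_0||$: since each center $z^j$ lies in $C^j$ and $z^j_0 \in C^j$, the triangle inequality gives $||z^1 - z^2|| \le ||z^1_0 - z^2_0|| + \diam(C^1) + \diam(C^2) \le ||z^1_0 - z^2_0|| + 2\bar\Delta$. Next I would apply Lemma \ref{Lzz} with $\lambda = e^{-\tau^1}$, with the two centers $z^1, z^2$, and with radius $r = \bar\Delta$. This yields a lower bound of the form
\[
\exp\!\left(-e^{-\tau^1}\pi(\bar\Delta + ||z^1 - z^2||)^2\right) - 2c_0 e^{-\tau^1/2}||z^1 - z^2||.
\]
The event produced guarantees a common nearest point at distance exceeding $\bar\Delta$ from both centers, hence lying outside both discs comprising $C^1 \cup C^2$ (since each $C^j$ is contained in a disc of radius $\diam(C^j) \le \bar\Delta$ around $z^j$). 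On that event the planted point $z^2$, being at distance $d \le \bar\Delta + ||z^1-z^2||$ from $z^1$\ldots in fact I must argue that $z^2$ beats the PPP's nearest legitimate point; this is where care is needed.

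The main obstacle, and the step I expect to require the most attention, is verifying that the event from Lemma \ref{Lzz} genuinely forces coalescence in the augmented process. The delicate point is that coalescence requires the planted point at $z^2$ to be strictly closer to $z^1$ than every PPP point outside $C^1 \cup C^2$; Lemma \ref{Lzz}'s event instead concerns the nearest \emph{full}-plane PPP point being shared. I would reconcile these by noting that the shared nearest point lies outside the excluded region and at distance $> \bar\Delta \ge \diam(C^1)$, combined with the fact that $z^2$ is the center of $C^2$ and hence closer to $z^1$ than that shared point would be after the planting. To close the estimate I would bound $\bar\Delta + ||z^1 - z^2|| \le 2(||z^1_0 - z^2_0|| + 2\bar\Delta)$ (crudely), absorb constants, and substitute $||z^1 - z^2|| \le ||z^1_0 - z^2_0|| + 2\bar\Delta$ into both terms to recover exactly the bound (\ref{pitz}). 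The exponential term becomes $\exp(-4\pi e^{-\tau^1}(||z^1_0 - z^2_0|| + 2\bar\Delta)^2)$ after the crude doubling, and the correction term matches $2c_0 ||z^1_0 - z^2_0|| e^{-\tau^1/2}$ once one checks that replacing $||z^1 - z^2||$ by the larger $||z^1_0 - z^2_0||$\ldots here one must be slightly careful about the direction of the inequality, since enlarging the subtracted term weakens the bound; I would instead bound $||z^1 - z^2|| \le ||z^1_0 - z^2_0||$ directly when the discs are small or otherwise track the constant, which is the genuinely fiddly part of the argument.
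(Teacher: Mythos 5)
Your overall strategy --- reduce to Lemma \ref{Lzz} with $\lambda = e^{-\tau^1}$ and a suitable radius --- is the same as the paper's, but your choice $r=\bar{\Delta}$ is too small, and this breaks the key implication. The event in Lemma \ref{Lzz} only guarantees that the nearest point $z$ of the full-plane PPP to $z^1$ satisfies $\min(||z-z^1||,||z-z^2||)>r$. With $r=\bar{\Delta}$ this does place $z$ outside $C^1\cup C^2$, as you note, but it does \emph{not} make the planted point win: coalescence requires $z^2$ to be closer to $z^1$ than every PPP point outside $C^1\cup C^2$, and $||z^1-z^2||$ can be as large as $||z^1_0-z^2_0||+2\bar{\Delta}$, which may vastly exceed $\bar{\Delta}$. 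A genuine PPP point at distance, say, $2\bar{\Delta}$ from $z^1$ lies outside both excluded sets, is perfectly consistent with your event, and would be chosen as the parent instead of $z^2$ --- no coalescence. Your sentence ``$z^2$ is the center of $C^2$ and hence closer to $z^1$ than that shared point'' is unsupported and false in this scenario. The choice the paper makes is $r=||z^1_0-z^2_0||+2\bar{\Delta}\ \ge\ ||z^1-z^2||$: then the event forces the nearest point of the (restricted, hence sparser) PPP to lie at distance $>r\ge||z^1-z^2||$ from $z^1$, so the planted point is necessarily the closest point of the augmented process. With this $r$ one has $r+||z^1-z^2||\le 2r$, and the exponential term of (\ref{pitz}) drops out directly, with no need for your crude doubling.

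A secondary point: your proposed repair of the correction term, namely ``bound $||z^1-z^2||\le||z^1_0-z^2_0||$,'' is simply false --- the current centers can be farther apart than the starting points by up to $2\bar{\Delta}$. (The paper's own display is also slightly loose on this term, but the statement survives because the correction term can be avoided entirely: the event that the full-plane PPP has no point within distance $||z^1-z^2||$ of $z^1$ already forces coalescence, and its probability $\exp(-\pi e^{-\tau^1}||z^1-z^2||^2)$ dominates the right side of (\ref{pitz}). Your version does not admit such a rescue, because the underlying event is the wrong one.)
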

\begin{proof}
Because $z^1_0 \in C^1$ and $z^2_0 \in C^2$,  we have
\[ ||z^1 - z^2|| \le || z^1_0 - z^2_0 || + 2\bar{\Delta} := r \]
and moreover each 
$\disc(z^i,r)$ contains $C^1 \cup C^2$. 
We can now apply Lemma \ref{Lzz} with 
$\lambda = e^{-\tau^1}$; if the event in  Lemma \ref{Lzz} occurs
then the process coalesces at the next step.
\end{proof}

\subsection{Diameters in the coupled process}
\label{sec:diamcoup}
The key ingredient  in proving Proposition \ref{Pcouple} is the following extension of Proposition \ref{C:ld} to the coupled EA process, 
which will enable us to apply Lemma \ref{L:coal}.
This extension looks ``obvious" but the proof is rather fussy. 

Fix large $b$, and regard a step $i$ of the coupled EA process as ``good" if 
\begin{equation}
e^{- \tau_i/2} 
\max( \diam(C^1_i), \diam (C^2_i) ) 
\le b .
\label{tCC}
\end{equation} 
Let $N_b(T)$ be the number of ``good" steps before\footnote{In the general case $t_0>0$ we only count the number of steps with $\tau^1_i > 0$.}
time $T$.
\begin{Proposition}
\label{P:diamcoup}
In the coupled EA process, for sufficiently large $b$ there exist constants $a, \rho > 0$ and $K< \infty$ such that
\[ \Pr(\Tcouple > T, \ N_b(T) \le aT) \le K \exp(-\rho T), \ 0 < T < \infty . \]
\end{Proposition}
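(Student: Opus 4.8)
The plan is to derive Proposition~\ref{P:diamcoup} from the single-process bound of Proposition~\ref{C:ld}, applied separately to the two component jump processes $\bC^1_{(j)}$ and $\bC^2_{(j)}$, each of which evolves as a standard EA process (this is the content of the construction behind Lemma~\ref{L:line_descent_2}). Throughout I work on the event $\{\Tcouple > T\}$, on which both components run as standard EA processes over the whole coupled-time interval $[0,T]$. The first ingredient I would establish is a \emph{step-count} version of Proposition~\ref{C:ld}: for a single standard EA process and $b$ large, the number of jumps $j$ with $\tau_{(j)} < T$ and $e^{-\tau_{(j)}/2}\diam(C_{(j)}) > b$ is at most $\eps T$ with probability at least $1 - A e^{-\rho T}$, where $\eps = \eps(b) \downarrow 0$ as $b \uparrow \infty$. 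This follows from precisely the regeneration-plus-large-deviations scheme already used for Proposition~\ref{C:ld}, counting the \emph{number} of jumps during each excursion of the auxiliary processes $(V_t),(W_t)$ above level $b$ (an i.i.d. sequence with mean tending to $0$ as $b\to\infty$, by the same $M/M/1$ busy-period domination) in place of the Lebesgue length of those excursions.

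Next I would set up the bookkeeping linking coupled steps to component jumps. Since $\tau_i=\min(\tau^1_i,\tau^2_i)$ and each coupled step advances exactly the current \emph{active} (min) component by one jump, the total number of steps with $\tau_i<T$ equals the number of component-$1$ jumps plus component-$2$ jumps made before coupled time $T$, which is $\ge c_1 T$ with exponentially high probability for some $c_1$ close to $2$ (Poisson concentration for each component, whose own time stays within $O(1)$ of coupled time). A step is \emph{bad} exactly when its active component or its inactive (max) component is coupled-large. For the active component the scaling time $\tau_i$ is that component's own time, so coupled-largeness coincides with own-largeness; moreover each own-large jump of a component is the active state of exactly one configuration (the component jumps as soon as it becomes the min), so the active-bad steps number at most the total own-large jump count of the two components, hence $\le 2\eps T$ whp by the step-count bound.

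The inactive component is the delicate part. Here the scaling factor $e^{-\tau_i/2}$ differs from the component's own $e^{-\tau^{\max}_i/2}$ by the inflation $e^{(\tau^{\max}_i-\tau^{\min}_i)/2}$, and while the inactive component sits frozen at one (possibly large-diameter) state the active component can make several jumps, so a single large configuration can spoil several steps. To control this I would (i) use that the gap $g_i:=|\tau^1_i-\tau^2_i|$ is a positive-recurrent chain --- a min-jump by $\theta\sim\mathrm{Exp}(1)$ sends $g\mapsto|g-\theta|$, which has strictly negative drift from large $g$ --- so $g_i$ has exponential tails; on steps with $g_i\le g_0$ the inflation is bounded and coupled-largeness at threshold $b$ forces own-largeness at the still-large threshold $b e^{-g_0/2}$, while the steps with $g_i>g_0$ are so rare that they number at most $\delta T$ whp, with $\delta=\delta(g_0)\downarrow 0$; and (ii) observe that the number of active-component jumps accumulated while the inactive component sits at a fixed state equals the number of jumps needed for the active own-time to traverse an interval of length equal to that inactive jump's size, so summing over inactive states the inactive-bad steps are comparable to the coupled-time \emph{occupation} during which the inactive component is own-large. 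By the occupation-time form of Proposition~\ref{C:ld} applied to each component this occupation is $\le \sfrac{2}{3}T$ whp, after a further regeneration/large-deviation concentration to pass from the raw sum of per-state jump counts to this occupation.

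Combining, outside an event of probability $K e^{-\rho T}$ we have (total steps) $\ge c_1 T$ and (bad steps) $\le 2\eps T + \sfrac{2}{3}T + \delta T$, so $N_b(T) \ge (c_1 - \sfrac{2}{3} - 2\eps - \delta)T$; choosing $b$ large and then $g_0$ large makes the bracket a positive constant $a$, and a union bound over the finitely many exponentially small failure events gives the claim. The main obstacle is exactly the inactive-component estimate of step (ii): converting ``one frozen large configuration spoils several steps'' into a clean bound requires coupling the gap control with a second large-deviation argument in coupled time, which is the source of the ``rather fussy'' proof the statement warns about.
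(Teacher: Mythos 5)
Your overall strategy is the one the paper uses: apply Proposition~\ref{C:ld} to each component jump process marginally, control the discrepancy between the coupled clock $\tau_i=\min(\tau^1_i,\tau^2_i)$ and each component's own clock, and convert an occupation-time bound into a step-count bound. The execution differs: the paper does not run a ``total steps minus bad steps'' count, but directly exhibits $\lfloor aT\rfloor$ good steps by intersecting the good occupation set ($\widetilde N_b(T)\ge T/3$) with the merged inter-event intervals, using Lemma~\ref{L:rate2} (the $\lfloor aT\rfloor$ longest merged intervals have total length $<T/4$) and Lemma~\ref{L:rate3} (control of $\tau_k^+-\tau_k$, playing the role of your gap chain $g\mapsto|g-\theta|$). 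Your step-count version of Proposition~\ref{C:ld} and your accounting of the \emph{active}-component bad steps are sound, and the constants work out (though you must choose $g_0$ first and then $b$, since it is the reduced threshold $be^{-g_0/2}$ that has to be ``sufficiently large'' for Proposition~\ref{C:ld}).

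The genuine gap is in your step (ii). You bound the inactive-bad steps by the number of component-$1$ jump times falling in component-$2$'s bad occupation set $B^2$, and pass from that count to $\Leb(B^2\cap[0,T])$ by a Poisson/regeneration concentration argument. That step needs the jump times of component $1$ to be independent of $B^2$, and they are not: $B^2$ is determined by the regions $C^2_{(k)}$, which are grown from spatial PPPs conditioned to avoid $C^1\cup C^2$ and are therefore correlated with the whole history of component $1$, including its jump times. Without independence, a random set of measure $T/3$ determined by component $2$'s geometry could in principle capture far more than $\asymp T/3$ of component $1$'s jumps. The paper flags precisely this trap (footnote to Lemma~\ref{L:indePoi}: the independent-PPP description of the two time sequences may only be used in the unconditional form $\Pr(\Tcouple>T,\ B^*_T)\le\Pr(B_T)$ for events $B_T$ defined in terms of the time PPPs \emph{alone}); your opening assertion that ``on the event $\{\Tcouple>T\}$ both components run as standard EA processes'' makes the same conflation. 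To repair this you must replace the concentration step by a pathwise statement about the two independent time PPPs only --- either a cognate of Lemma~\ref{L:rate2} upper-bounding the number of points of one PPP in \emph{any} union of the other's inter-event intervals of prescribed total length, transferred via Lemma~\ref{L:indePoi} and then intersected with the marginal event $\{\Leb(B^j\cap[0,T])\le T/3\}$ --- or simply switch to the paper's direct lower bound on the number of good steps, which avoids upper-bounding bad steps altogether.
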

The bound does not depend on the distance $|| z^1_0 - z^2_0||$ between the particles.

In this section \ref{sec:diamcoup}, for an event indexed by $T$ we say the event
``has vanishing probability" if the probabilities are $O(\exp(- \rho T))$ as $T \to \infty$, for some $\rho > 0$.
As in Proposition \ref{C:ld}, for $j = 1,2$ write
\[ B^j  = \cup_{ \{i:   
e^{-\tau^j_i/2} \diam(C^j_{i}) > b 
\}} 
[\tau^j_{i-1}, \tau^j_{i}) . \] 
Note this is the same if use the indices $\tau^j_{(i)}$ of the jump processes.

Write
\[ \widetilde{N}_b(T) := \Leb ((B^1 \cup B^2)^c \cap [0,T] ) .\] 
In words, this is the duration of time for which a ``good" event similar to (\ref{tCC}) is occurring. 
Applying Proposition \ref{C:ld} to both components of the coupled process, for sufficiently large $b$ 
\begin{equation}
 \mbox{ the event } \{\Tcouple > T, \    \widetilde{N}_b(T)  < T/3   \} 
\mbox{ has vanishing probability}. 
\label{TcTv}
\end{equation}
This is almost what we are trying to prove as Proposition \ref{P:diamcoup}, 
except that we need to switch from ``duration of time" $ \widetilde{N}_b(T)$ to ``number of steps" $N_b(T)$.

In the construction of the coupled EA process we can start with two independent rate-$1$ PPPs on $(0,\infty)$ and use these as the values of 
$\tau^1_{(i)}$ and $\tau^2_{(i)}$ until the coalescence step.  
So on the event $\{\Tcouple > T\}$ these times, within $[0,T]$,  coincide with the times of  two independent rate-$1$ PPPs.
Here is a helpful way to record a consequence of this fact.\footnote{The fact is slightly subtle, in that the previous assertion is not true
{\em conditional on the event $\{\Tcouple > T\}$}.}
\begin{Lemma}
\label{L:indePoi}
Let $B_T$ be an event defined in terms of two independent rate-$1$ PPPs 
on $[0,T]$.
Let $B^*_T$ be the corresponding event 
defined in terms of the times 
$ \tau^1_{(0)} < \tau^1_{(1)} <  \tau^1_{(2)} < \ldots < T$ 
and
$ \tau^2_{(0)} < \tau^2_{(1)} <  \tau^2_{(2)} < \ldots < T$ 
in the coupled EA process.  Then
\[ \Pr( \Tcouple > T, \ B^*_T)  \le \Pr(B_T)  . \]
\end{Lemma}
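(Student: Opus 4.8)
The plan is to exploit the coupling construction already described: the coupled EA process can be built from two independent rate-$1$ Poisson point processes on $(0,\infty)$, which supply the jump times $\tau^1_{(i)}$ and $\tau^2_{(i)}$, together with the i.i.d. Exponential marks $\theta_j^1, \theta_j^2$ and the auxiliary PPPs used to choose successive positions. The crucial observation, stated in the remark preceding the lemma, is that \emph{up to and including} the coalescence step, the values $\tau^1_{(i)}$ and $\tau^2_{(i)}$ are genuinely equal to the points of two independent rate-$1$ PPPs; the coupling only ``glues'' the two processes together at coalescence, which changes their joint law thereafter. So on the event $\{\Tcouple > T\}$ the restrictions of the jump-time sequences to $[0,T]$ literally agree, pointwise, with the restrictions of two independent rate-$1$ PPPs to $[0,T]$.

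First I would set up the probability space explicitly. Let $\Pi^1, \Pi^2$ be two independent rate-$1$ PPPs on $(0,\infty)$, and build the coupled EA process on the same space so that, for each $j$, the jump times $\tau^j_{(i)}$ agree with the ordered points of $\Pi^j$ until the step at which coalescence occurs. Define $B_T$ in terms of $\Pi^1,\Pi^2$ restricted to $[0,T]$, and let $B^*_T$ be the same event read off from the coupled-process times $\tau^1_{(i)}, \tau^2_{(i)} < T$. On the event $\{\Tcouple > T\}$ the coalescence step has $\Tcouple > T$, hence no gluing has taken place within $[0,T]$, so the two families of times coincide on $[0,T]$ and therefore $B^*_T$ occurs if and only if $B_T$ occurs. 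This gives the pointwise inclusion
\[
\{\Tcouple > T\} \cap B^*_T \ = \ \{\Tcouple > T\} \cap B_T \ \subseteq \ B_T .
\]
Taking probabilities yields $\Pr(\Tcouple > T,\ B^*_T) = \Pr(\Tcouple > T,\ B_T) \le \Pr(B_T)$, which is the asserted bound.

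The main obstacle, and the only place real care is needed, is justifying the event-identity on $[0,T]$ and the footnote's warning that the independence is \emph{not} preserved after conditioning on $\{\Tcouple > T\}$. The point is that one must work on the unconditioned space: the inclusion of \emph{events} (intersected with $\{\Tcouple > T\}$) is what is being used, not an equality of conditional laws. I would therefore phrase the argument entirely through set inclusions before taking any probability, so that the subtlety is handled automatically: the map that reads the coupled-process jump times and the map that reads $\Pi^1,\Pi^2$ produce identical inputs to the indicator of the event on the sub-event $\{\Tcouple > T\}$, because coalescence is precisely the first moment at which the coupling departs from the two independent processes. A secondary technical point is to confirm that $B_T$ and $B^*_T$ depend on the PPPs only through their restriction to $[0,T]$ (which the hypothesis guarantees), so that no information beyond time $T$ — where the coupled law genuinely differs from the independent one — is ever invoked.
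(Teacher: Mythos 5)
Your proposal is correct and follows essentially the same route as the paper, which constructs the coupled EA process from two pre-generated independent rate-$1$ PPPs so that on $\{\Tcouple > T\}$ the jump times in $[0,T]$ coincide pointwise with those PPPs, giving the set identity $\{\Tcouple > T\}\cap B^*_T = \{\Tcouple > T\}\cap B_T \subseteq B_T$ and hence the inequality. Your explicit handling of the footnote's subtlety (working with unconditional probabilities of event intersections rather than conditional laws) is exactly the intended point.
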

We will apply this to events $B_T$ which have vanishing probability, in which setting Lemma \ref{L:indePoi} 
says we can ignore such events for the purpose of proving Proposition \ref{P:diamcoup}.
We state the following  straightforward 
large deviation bounds for quantities associated with the PPP.
\begin{Lemma}
\label{L:rate2}
Let $(\tau_i)$ be a rate-$2$ PPP on $(0,\infty)$, let 
$a > 0$ and let $H_T(a)$ be the sum of the lengths of the 
$\lfloor aT \rfloor$ longest intervals $(\tau_i, \tau_{i+1})$ with $\tau_i < T$.
Then, for sufficiently small $a$, 
\begin{equation}
 \mbox{ the event }
 \{ H_T(a) \ge T/4 \}
 \mbox{ has vanishing probability}. 
\label{HbT}
\end{equation}
\end{Lemma}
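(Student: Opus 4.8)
The plan is to prove Lemma \ref{L:rate2} by reducing the statement about the longest intervals to a statement about the number of \emph{long} intervals, which is a sum of i.i.d.\ indicator-type quantities amenable to the standard Cram\'er large deviation bound. First I would recall that for a rate-$2$ PPP the gaps $G_i := \tau_{i+1} - \tau_i$ are i.i.d.\ Exponential($2$), and the number of gaps with $\tau_i < T$ is $\approx 2T$ (concentrated, with exponentially small deviations). The key observation is that if $H_T(a) \ge T/4$, then the $\lfloor aT\rfloor$ longest gaps alone account for total length at least $T/4$; in particular, either there are ``too many'' gaps exceeding some fixed threshold $\ell$, or the gaps are so heavy-tailed that a few of them are enormous. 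Since each $G_i$ is Exponential($2$), very long gaps are exponentially unlikely, and I would exploit this.

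\medskip
\noindent
More concretely, the main step is the following dichotomy. Fix a threshold $\ell$ (to be chosen). The sum of the $\lfloor aT\rfloor$ longest gaps is bounded by
\[
H_T(a) \le \lfloor aT\rfloor \cdot \ell \ + \ \sum_{i:\, \tau_i < T} (G_i - \ell)^+ .
\]
The first term is $\le a\ell\, T$, which is $< T/8$ provided $a$ is small enough (given $\ell$). For the second term, write $M_T := \sum_{i:\,\tau_i<T}(G_i-\ell)^+$. By the memorylessness of the exponential, $(G_i - \ell)^+$ is a mixture: it is $0$ with probability $\Pr(G_i \le \ell) = 1 - e^{-2\ell}$, and otherwise Exponential($2$); in particular $\Ex\exp(s (G_i-\ell)^+) < \infty$ for all $s < 2$, and this expectation can be made close to $1$ by taking $\ell$ large. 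Conditioning on the number $n_T$ of gaps (which is itself Poisson-like and concentrated near $2T$ with exponentially small tails), $M_T$ is a sum of $n_T$ i.i.d.\ terms each with small mean, so by Cram\'er's theorem $\Pr(M_T \ge T/8)$ decays exponentially in $T$ once $\ell$ is large enough that the per-term mean is below, say, $1/20$. Combining, $\Pr(H_T(a) \ge T/4)$ is bounded by $\Pr(M_T \ge T/8)$ plus the exponentially small probability that $n_T$ deviates far from $2T$, both of which have vanishing probability.

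\medskip
\noindent
The order of operations is thus: (i) choose $\ell$ large enough that $\Ex(G_i-\ell)^+$ is tiny and the exponential moment $\Ex\exp(s(G_i-\ell)^+)$ is finite and near $1$ for some fixed $s>0$; (ii) choose $a$ small enough that $a\ell < 1/8$; (iii) apply Cram\'er to $M_T$, using conditioning on $n_T$ together with the exponential concentration of $n_T$ about its mean $2T$; (iv) assemble the pieces. I expect the main obstacle to be the bookkeeping around the random index set $\{i : \tau_i < T\}$: the number of gaps is itself random and correlated with the gap lengths through the stopping condition $\tau_i < T$, so one must be slightly careful to condition correctly (or to bound $n_T$ above by a deterministic multiple of $T$ on a high-probability event and work with a fixed-length i.i.d.\ sum). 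This is the standard renewal-theoretic subtlety, and it is handled cleanly by noting $\Pr(n_T > 3T)$ is exponentially small and, on $\{n_T \le 3T\}$, bounding $M_T$ by the sum of $\lfloor 3T\rfloor$ independent copies of $(G-\ell)^+$, to which Cram\'er applies directly. Once that reduction is in place, the remaining estimates are routine.
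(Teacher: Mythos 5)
The paper does not actually prove Lemma \ref{L:rate2}: it is introduced with the phrase ``We state the following straightforward large deviation bounds'' and left to the reader, so there is no official argument to compare against. Your proof is a correct and complete way to fill this in. The pathwise truncation bound $H_T(a) \le \lfloor aT\rfloor\,\ell + \sum_{i:\,\tau_i<T}(G_i-\ell)^+$ is valid because the omitted excesses are nonnegative; the order of choices (first $\ell$ large, then $a < 1/(8\ell)$) respects the quantifier ``for sufficiently small $a$''; and you correctly defuse the renewal-theoretic subtlety by bounding the random index set on the event $\{n_T \le 3T\}$ by a deterministic number of unconditioned i.i.d.\ gaps, to which the Chernoff bound applies (e.g.\ with $s=1$ one gets $\Pr(\sum_{i\le 3T}(G_i-\ell)^+ \ge T/8) \le \exp(-T/8 + 3T e^{-2\ell})$, which decays once $e^{-2\ell} < 1/24$). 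The only blemish is the placeholder constant: with the $3T$ cap on the number of gaps you need the per-term mean $e^{-2\ell}/2$ below $1/24$, not $1/20$; this is immaterial and fixed by taking $\ell$ slightly larger. No gap.
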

\begin{Lemma}
\label{L:rate3}
Let $(\tau_i)$ be a rate-$2$ PPP on $(0,\infty)$, represented as the superposition of two independent rate-$1$ PPPs.
Define $\tau_k^+$ as the minimum value of $\tau_\ell$ for $\ell \ge k+2$ such that the events at $\tau_{k+1}. \tau_{k+2}, \ldots, \tau_{\ell}$ include events 
from both component processes.  Define
\[ G_T(a^\prime) = \sum \{  \tau_k^+ - \tau_k \ : \  \tau_k \le T, \ \tau_k^+ - \tau_k > a^\prime \} . \]
Then, for sufficiently large $a^\prime$, 
\begin{equation}
 \mbox{ the event }
 \{ G_T(a^\prime) \ge T/12 \}
  \mbox{ has vanishing probability}.
\label{HbT2}
\end{equation}
\end{Lemma}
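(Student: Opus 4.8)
The plan is to exploit the regenerative structure of the two-coloring of the rate-$2$ PPP, in close parallel with the proof of Proposition~\ref{C:ld}. Label each point $\tau_i$ by which of the two component rate-$1$ processes produced it; since the components have equal rate, these labels are i.i.d.\ fair coin flips, independent of the point locations, and the label sequence decomposes into maximal runs $R_1, R_2, \ldots$ of constant label with i.i.d.\ Geometric$(1/2)$ lengths. For an index $k$ with $\tau_{k+1}$ lying in run $R_j$, the first subsequent point of the opposite label is the first point $s_{j+1}$ of the next run $R_{j+1}$, so $\tau_k^+ = s_{j+1}$ for \emph{every} such $k$. Thus $G_T(a')$ decomposes as a sum over runs,
\[
G_T(a') = \sum_{j} r_j, \qquad r_j := \sum_{k:\, \tau_{k+1}\in R_j} (s_{j+1}-\tau_k)\,\indic_{\{s_{j+1}-\tau_k > a'\}},
\]
where the $j$-sum ranges over runs meeting $[0,T]$, boundary runs at $T$ contributing a negligible correction.

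First I would treat the runs as essentially i.i.d.\ cycles. The run lengths are i.i.d.\ Geometric$(1/2)$ and the inter-arrival gaps are i.i.d.\ Exponential$(2)$, independent of the labels, so the pairs $(\text{cycle length } s_{j+1}-s_j,\ \text{reward } r_j)$ form a stationary, finitely dependent sequence (the only coupling between consecutive runs is through the single gap straddling a run boundary, which can be absorbed by passing to $1$-dependent blocks, exactly as for renewal--reward sums). The target is then the classical large deviation bound for i.i.d.\ (or block-i.i.d.)\ sums, precisely the tool invoked for $(V_t)$ and $(W_t)$ in Proposition~\ref{C:ld}: if the per-cycle reward has a finite exponential moment and mean $\Ex[r_j]$ small relative to the mean cycle length, then $\Pr(G_T(a') \ge T/12)$ decays exponentially in $T$. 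Since $\Ex[r_j]\downarrow 0$ as $a'\uparrow\infty$ (each summand carries the indicator and the unthresholded reward has finite mean), one fixes $a'$ large enough that $\Ex[G_T(a')] \le T/24$, after which the i.i.d.\ large deviation theorem gives the claimed $O(\exp(-\rho T))$ bound, matching the definition of ``vanishing probability''.

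The hard part is the uniform exponential-moment control of the per-cycle reward $r_j$: a single long run of length $L_j$ aggregates $\Theta(L_j)$ indices $k$ into $r_j$, so $r_j$ is not a single gap but a sum of trailing partial sums of the run's gap sequence, and bounding $\Ex[\exp(s\, r_j)]$ for a suitable $s>0$ is the crux. It is here that the large threshold $a'$ is essential: thresholding discards the short-run bulk and should let one dominate the thresholded reward by a functional of a transient workload process, in analogy with the comparison of excursions of $(V_t)$ above level $b$ to an $M/M/1$ busy period used to establish (\ref{Lb2}). Once such an exponential-moment bound is secured, the regeneration-plus-classical-LDP argument of Proposition~\ref{C:ld} applies essentially verbatim and completes the proof; conversely, this exponential-moment estimate is exactly the point at which the contributions of anomalously long runs must be shown not to dominate, and it is the step I expect to require the most care.
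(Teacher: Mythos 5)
Your run decomposition is exactly right, and so is your instinct that everything hinges on the exponential-moment control of the per-cycle reward $r_j$ --- but that step fails, and for a concrete reason. (For comparison: the paper offers no proof at all, presenting Lemmas \ref{L:rate2} and \ref{L:rate3} as ``straightforward'' large deviation bounds; your analysis shows the second one is not.) Conditionally on a run $R_j$ of length $L_j = m$, your reward is $r_j = \sum_{i=2}^{m+1} S_i \indic_{\{S_i > a^\prime\}}$, where $S_i$ is the sum of the last $i$ gaps ending at $s_{j+1}$. The gaps have mean $\sfrac{1}{2}$, so typically $S_i \approx i/2$; for $m \gg a^\prime$ the threshold discards only an $O((a^\prime)^2)$ initial chunk, and $r_j \approx m^2/4$. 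Since $L_j$ is Geometric$(1/2)$, forcing a run of length $m \asymp \sqrt{x}$ with all its gaps in $[1/4,1]$ costs only $e^{-cm}$, whence $\Pr(r_j > x) \ge e^{-c^\prime \sqrt{x}}$ and $\Ex \exp(s\, r_j) = \infty$ for \emph{every} $s > 0$, no matter how large $a^\prime$ is. The quadratic aggregation of overlapping waiting times within a single run is exactly the feature that the $M/M/1$ busy-period comparison used for (\ref{Lb2}) never has to contend with, so no analogue of that domination can rescue the renewal--reward LDP here; the best such a decomposition can yield is a bound of order $e^{-c\sqrt{T}}$.

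Moreover, the obstruction is not an artifact of your method: with the paper's definition of ``vanishing probability'' ($O(e^{-\rho T})$), the statement as literally written admits no proof. Plant a single run of length $m = \lceil \sqrt{T} \rceil$ starting near time $1$ with all gaps in $[1/4,1]$: this event has probability $e^{-\Theta(\sqrt{T})}$, and on it the $m$ indices $k$ with $\tau_{k+1}$ in the run contribute trailing sums of $j$ gaps each at least $j/4$, which exceed $a^\prime$ once $j > 4a^\prime$ and total at least roughly $m^2/8 \ge T/12$, all with $\tau_k \le T$. Hence $\Pr(G_T(a^\prime) \ge T/12) \ge e^{-c\sqrt{T}}$, which is not $O(e^{-\rho T})$ for any $\rho>0$. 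The repair --- and all that the deletion argument preceding (\ref{suchstep}) actually uses --- is to control the length \emph{covered} rather than the sum of the overlapping lengths $\tau_k^+ - \tau_k$: for instance $\sum_k (\tau_{k+1} - \tau_k)\indic_{\{\tau_k^+ - \tau_k > a^\prime\}}$, or the Lebesgue measure of $\cup_k [\tau_k, \tau_k^+]$ over the thresholded $k$. Per run this is linear in $L_j$ (at most the run's span plus one gap, a geometric sum of Exponential$(2)$ variables), which does have uniform exponential moments and a mean tending to $0$ as $a^\prime \to \infty$; with that substitute your regenerative argument goes through essentially verbatim and still suffices for Proposition \ref{P:diamcoup}.
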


Now choose $a$ sufficiently small and $a^\prime$ sufficiently large that inequalities (\ref{HbT}) and (\ref{HbT2}) hold.
Write $H^*_T(a)$ and $G^*_T(a^\prime)$ for the random variables corresponding (as in Lemma  \ref{L:indePoi}) to $H_T(a)$ and   $G_T(a^\prime) $ 
defined in terms of the times in the coupled EA process.
Consider the event
 \begin{equation}
 \{\Tcouple > T, \ H^*_T(a)  < T/4,   G^*_T(a^\prime) < T/12,  \widetilde{N}_b(T) \ge T/3 \} .
\label{Bevent}
\end{equation} 
On this event, take the ``good" intervals comprising  $\widetilde{N}_b(T)$ (with total length $\ge T/3$) and delete the intervals 
comprising $G^*_T(a^\prime)$ (with total length $< T/12$).
There remain ``good" intervals with total length $>T/4$, so there are at least $\lfloor aT \rfloor$ such intervals.
In other words, on  event (\ref{Bevent})   there are
at least $\lfloor aT \rfloor$ steps $i$ of the coupled process such that
\begin{equation}
 [\tau_i, \tau_{i+1}) 
\mbox{ is disjoint from } 
B^1 \cup B^2 \mbox{ and } \tau^+_i \le \tau_i + a^\prime .
\label{suchstep}
\end{equation}
For each such step $i$ we have (see argument below),
\begin{equation}
e^{- \tau_i/2} 
\max( \diam(C^1_i), \diam (C^2_i) ) 
\le b e^{a^\prime/2} := \beta, \mbox{ say}.
\label{tCC2}
\end{equation} 
In other words, on the event (\ref{Bevent})  we have
$N_\beta(T) \ge  \lfloor aT \rfloor$. 
So now
\begin{eqnarray*}
\lefteqn{ \Pr(\Tcouple > T, \ N_\beta(T) \le aT)} \\
&  \le  &
\Pr( \Tcouple > T, \ H^*_T(a)  > T/4) \\
&+& \Pr( \Tcouple > T, \ G^*_T(a^\prime)  > T/12)
+ \Pr( \Tcouple > T, \  \widetilde{N}_b(T) < T/3 ) \\
&\le& \Pr( H_T(a)  > T/4) + \Pr (G^*_T(a^\prime)  > T/12)
+ \Pr( \Tcouple > T, \  \widetilde{N}_b(T) < T/3 )
\end{eqnarray*}
using Lemma \ref{L:indePoi}.
Each term on the right has vanishing probability, by  (\ref{TcTv}) and (\ref{HbT})  and (\ref{HbT2}), and this establishes  Proposition \ref{P:diamcoup} 
(with $\beta$ in place of $b$).

\setlength{\unitlength}{0.1in}
\begin{figure}
\caption{The times $\tau_i$ of steps of the coupled process are shown on the axis.
The arrows point to the  times  
$\tau^1_{(j)}$ and $\tau^2_{(i-j)}$ associated with the completed steps 
of the component processes.
}
\label{Fig:tau2}
\begin{picture}(53,20)(-2,-10)
\put(-1,0){\line(1,0){53}}
\put(0,0){\circle*{0.6}}
\put(0,5){\circle*{0.6}}
\put(4,0){\circle*{0.6}}
\put(4,-5){\circle*{0.6}}
\put(10,0){\circle*{0.6}}
\put(10,5){\circle*{0.6}}
\put(17,0){\circle*{0.6}}
\put(17,5){\circle*{0.6}}
\put(20,0){\circle*{0.6}}
\put(20,-5){\circle*{0.6}}
\put(25,0){\circle*{0.6}}
\put(25,-5){\circle*{0.6}}
\put(32,0){\circle*{0.6}}
\put(32,5){\circle*{0.6}}
\put(34,0){\circle*{0.6}}
\put(34,-5){\circle*{0.6}}
\put(37,0){\circle*{0.6}}
\put(37,-5){\circle*{0.6}}
\put(42,0){\circle*{0.6}}
\put(42,5){\circle*{0.6}}
\put(0,0){\vector(0,1){4.2}}
\put(0,0){\vector(4,-5){3.5}}
\put(10,0){\vector(0,1){4.2}}
\put(10,0){\vector(2,-1){9.2}}
\put(17,0){\vector(0,1){4.2}}
\put(17,0){\vector(3,-5){2.6}}
\put(32,0){\vector(0,1){4.2}}
\put(32,0){\vector(2,-5){1.6}}
\put(42,0){\vector(0,1){4.2}}
\put(4,0){\vector(0,-1){4.2}}
\put(4,0){\vector(6,5){5.2}}
\put(20,0){\vector(0,-1){4.2}}
\put(20,0){\vector(12,5){11.2}}
\put(25,0){\vector(0,-1){4.2}}
\put(25,0){\vector(7,5){6.2}}
\put(34,0){\vector(0,-1){4.2}}
\put(34,0){\vector(8,5){7.2}}
\put(37,0){\vector(0,-1){4.2}}
\put(37,0){\vector(1,1){4.2}}
\put(42,0){\vector(1,-1){2.2}}
\put(-3,-0.5){$\tau_i$}
\put(-3,4.5){$\tau^1_i$}
\put(-3,-5.5){$\tau^2_i$}
\put(20.3,-1){$\tau_k$}
\put(25.3,-1){$\tau_{k+1}$}
\put(17.5,4.8){$\tau^1_{(j-1)}$}
\put(32.5,4.8){$\tau^1_{(j)}$}
\put(18.2,-6.3){$\tau^2_{(k-j)}$}
\put(23.2,-6.3){$\tau^2_{(k-j+1)}$}
\end{picture}
\end{figure}
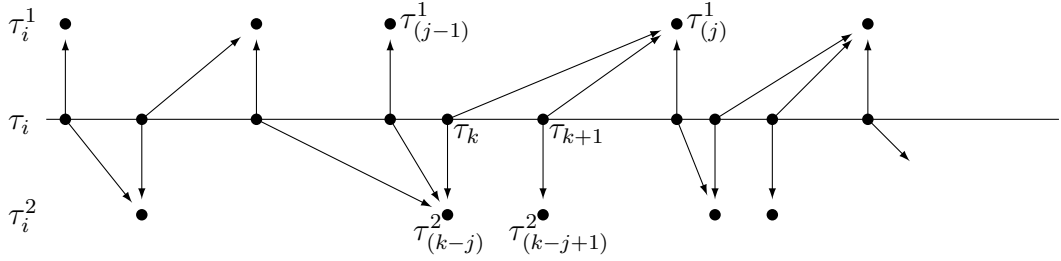

The argument that (\ref{suchstep}) implies (\ref{tCC2}) is 
illustrated by the case  in Figure \ref{Fig:tau2}.  
Consider $\tau_k = \min(\tau^1_k,\tau^2_k) 
= \min(\tau^1_{(j)},\tau^2_{(k-j)})$ 
for some $j$, and suppose that (as in the Figure) $\tau^1_{(j)} > \tau^2_{(k-j)}$.
Saying that 
\[
 [\tau_k, \tau_{k+1}) 
\mbox{ is disjoint from } 
B^1 \cup B^2 
\]
is saying that
\[   [\tau^1_{(j-1)}, \tau^1_{(j)}) 
\mbox{ is not in } 
B^1, \mbox{ and }
[\tau^2_{(k-j)}, \tau^2_{(k-j+1)}) 
\mbox{ is not in } 
B^2
\]
which is saying that
\[ e^{-\tau^1_{(j)}/2 } \diam(C^1_k) \le b
\mbox{ and }
e^{-\tau^2_{(k-j+1)}/2 } \diam(C^2_{k+1}) \le b .\]
Now consider the first time after $\tau_k$ that both components have expanded, that is 
\[
\tau^+_k := \min \{\tau_j : \tau^1_j > \tau^1_k \mbox{ and } \tau^2_j > \tau^2_k \} .
\]
Then the inequality above implies
\[
e^{-\tau^+_k/2} \max( \diam(C^1_k), \diam(C^2_k)) \le b . 
\]
So when $\tau^+_k \le \tau_k + a^\prime$ we have (\ref{tCC2}).

\subsection{Proof of Proposition \ref{Pcouple}}
\label{sec:ppcouple}
We need the following standard martingale-type bound. 
\begin{Lemma}
\label{L:SF}
Let $S \ge 1$ be a stopping time for a filtration $(\FF_n)$.  
For any $0<p_0<1$ and $m \ge 1$,
\[ \Pr(S > n) \le (1-p_0)^m + \Pr( L(n,p_0) < m, S > n) \]
where
\[ L(n,p_0) = \vert \{i: 1 \le i \le n, \ \Pr(S=i|\FF_{i-1})  \ge p_0 \} \vert .\]
\end{Lemma}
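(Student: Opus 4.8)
The plan is to split the event $\{S > n\}$ according to whether at least $m$ ``favorable" indices have occurred, and to show that surviving past $m$ favorable indices is exponentially unlikely. Write $G_i := \{\Pr(S = i \mid \FF_{i-1}) \ge p_0\}$; since $S$ is a stopping time this event is $\FF_{i-1}$-measurable, so $L(n,p_0) = \sum_{i=1}^n \indic_{G_i}$, and the indices at which $G_i$ holds form a sequence of \emph{predictable} times $\sigma_1 < \sigma_2 < \cdots$ (meaning $\{\sigma_k = i\} \in \FF_{i-1}$). Because $\{S > n\} \cap \{L(n,p_0) \ge m\} = \{S > n\} \cap \{\sigma_m \le n\} \subseteq \{\sigma_m \le n, \ S > \sigma_m\}$, it suffices to prove
\[ \Pr(\sigma_m \le n, \ S > \sigma_m) \le (1-p_0)^m , \]
since the remaining part of $\{S > n\}$ is precisely the term $\Pr(L(n,p_0) < m, \ S > n)$ appearing in the claim.

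To establish this I would set $E_k := \{\sigma_k \le n, \ S > \sigma_k\}$ (with $E_0 := \Omega$) and first observe that these events are decreasing in $k$: $S > \sigma_k$ forces $S > \sigma_{k-1}$, and $\sigma_k \le n$ forces $\sigma_{k-1} \le n$. The heart of the argument is the one-step bound $\Pr(E_k) \le (1-p_0)\Pr(E_{k-1})$, which I would prove by conditioning on $\FF_{\sigma_k - 1}$. First check that $F := E_{k-1} \cap \{\sigma_k \le n\}$ is $\FF_{\sigma_k-1}$-measurable: on $\{\sigma_k = i\}$ it is built from $\{\sigma_{k-1} = j\}$ for $j < i$ together with $\{S > j\} \in \FF_j \subseteq \FF_{i-1}$. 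Then
\[ \Pr(E_k) = \Ex\left[ \indic_F \, \Pr(S > \sigma_k \mid \FF_{\sigma_k - 1}) \right] . \]
On $\{\sigma_k = i\}$ the index $i$ is favorable, so $\Pr(S = i \mid \FF_{i-1}) \ge p_0$; combined with $\Pr(S \ge i \mid \FF_{i-1}) = \indic_{\{S \ge i\}}$ this yields $\Pr(S > \sigma_k \mid \FF_{\sigma_k-1}) \le 1-p_0$. Hence $\Pr(E_k) \le (1-p_0)\Pr(F) \le (1-p_0)\Pr(E_{k-1})$, and iterating from $\Pr(E_0) = 1$ gives $\Pr(E_m) \le (1-p_0)^m$, completing the reduction above.

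The step I expect to be fussiest is the measurability bookkeeping around the random favorable times: verifying that each $\sigma_k$ is predictable, that $F \in \FF_{\sigma_k-1}$, and --- most delicately --- that the conditional bound survives the possibility that $S$ has already stopped at some \emph{non}-favorable index strictly between $\sigma_{k-1}$ and $\sigma_k$. This is handled automatically, because $\Pr(S > i \mid \FF_{i-1})$ is supported on $\{S \ge i\}$ (as $\{S \ge i\} \in \FF_{i-1}$): on $\{S < \sigma_k\}$ the conditional probability of surviving past $\sigma_k$ vanishes, so the bound $1-p_0$ holds trivially there as well. Everything else is the standard telescoping of a supermartingale-type inequality along the predictable times.
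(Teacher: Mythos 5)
Your argument is correct, but it proceeds by a different mechanism than the paper's. The paper defines the explicit multiplicative martingale $M_n = \indic_{\{S>n\}} \big/ \prod_{1 \le i \le n} \Pr(S > i \mid \FF_{i-1})$, observes that $M_n \ge (1-p_0)^{-m}$ on $\{L(n,p_0) \ge m,\ S>n\}$ (each favorable index contributes a factor $\le 1-p_0$ to the product and every other factor is $\le 1$), and concludes from $1 = \Ex M_n$ and Markov's inequality that $\Pr(L(n,p_0) \ge m,\ S>n) \le (1-p_0)^m$; the decomposition of $\{S>n\}$ into the two events is then identical to yours. You instead telescope a one-step conditional bound along the predictable times $\sigma_1 < \sigma_2 < \cdots$ of favorable indices, getting $\Pr(E_k) \le (1-p_0)\Pr(E_{k-1})$ and hence $\Pr(\sigma_m \le n,\ S > \sigma_m) \le (1-p_0)^m$, which dominates the same event. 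The trade-off: the paper's route is shorter and needs no bookkeeping about predictable times or $\FF_{\sigma_k - 1}$ (at the cost of verifying the martingale property and quietly handling the event where some conditional probability in the product vanishes), while your route is more elementary and localizes exactly where each factor of $1-p_0$ is paid; your observation that $\Pr(S > i \mid \FF_{i-1}) = \indic_{\{S\ge i\}} - \Pr(S=i\mid\FF_{i-1}) \le 1-p_0$ on a favorable index is precisely the pointwise fact the paper's product bound relies on. Both are complete; no gap.
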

\begin{proof}
The process $(M_n)$ with $M_0 = 1$ and, for $n \ge 1$,
\begin{eqnarray*}
M_n =  & \quad 0 & \mbox{ on } \{S \le n\} \\
\quad = & \frac{1}{\prod_{1 \le i \le n} \Pr(S > i|\FF_{i-1})} &  \mbox{ on } \{S > n\}
\end{eqnarray*}
is a martingale.  On the event 
$\{  L(n,p_0) \ge m, S > n \}$ we have $M_n \ge (1-p_0)^{-m}$
and so
\[ 1 = \Ex M_n \ge \Ex M_n \indic_{(L(n,p_0) \ge m, S > n)} 
\ge (1-p_0)^{-m} \Pr( L(n,p_0) \ge m, S > n) .\] 
Because 
\[ \Pr(S > n)  =  \Pr( L(n,p_0) \ge m, S > n) +  \Pr( L(n,p_0) < m, S > n) \]
the result follows.
\end{proof}

We can now combine previous ingredients to prove Proposition \ref{Pcouple}.
Suppose $i$ is such that 
$\Icouple > i$ and the configuration 
$(\bC^1_i; \bC^2_i)$ satisfies (\ref{tCC}). 
Then by (\ref{pitz}) the probability of coalescing on the next step is 
at least
\[
\exp(- 4\pi  (|| z^1_0 - z^2_0 || e^{-\tau_i /2} +2 b)^2)
- 2c_0 || z^1_0 - z^2_0 || e^{-\tau_i /2}  .
\]
So there exist constants $\alpha > 0$ and $p_0 > 0$  
(determined by $b$ and $c_0$) such that, for the natural 
filtration $(\FF_i)$ of the coupled EA process,
\begin{equation}
 \Pr(\Icouple = i+1 \vert \FF_i) \ge p_0 
\mbox{ on } 
\{ \Icouple > i, \ || z^1_0 - z^2_0 || e^{-\tau_i /2} \le \alpha, 
\mbox{ $(\bC^1_i; \bC^2_i)$ satisfies (\ref{tCC})}
 \}
. \label{on3}
\end{equation}
Appealing to Lemma \ref{L:SF},
\begin{equation}
 \Pr(\Icouple > n) \le (1-p_0)^m + \Pr( L_n < m, \Icouple > n) 
 \label{Icn}
 \end{equation}
where
\[ L_n = \vert \{i: \ 0 \le i \le n-1, \ \Pr(\Icouple =i+1|\FF_{i})  \ge p_0 \} \vert .\]
Recall the definition of $N_b(t)$ in Proposition \ref{P:diamcoup}.
Take $a > 0$ (to be specified later) and consider some $m < an$.
If
\begin{equation}
 || z^1_0 - z^2_0 || e^{-\tau_j /2} \le \alpha \mbox{ for } 
j =  \lfloor an \rfloor - m 
\label{zet}
\end{equation}
then, on the event  $\{  \Icouple > n, \ N_b(t) > an \}$, the events in (\ref{on3}) hold for at least 
$m$ values of $i \le n$, which implies $L_n \ge m$.
So if (\ref{zet}) holds then
\[ \Pr(L_n < m, \Icouple > n) \le \Pr( \Icouple > n, N_b(n) \le an)  \]
and then using (\ref{Icn}) we have
\[  \Pr(\Icouple > n) \le (1-p_0)^m + \Pr( \Icouple > n, N_b(n) \le an) + 
 \Pr(\Icouple > n,  \mbox{ event (\ref{zet}) fails}) 
. \]
Proposition \ref{P:diamcoup} implies that
for sufficiently large $b$ there exist  constants $a, \rho > 0$ and $K< \infty$ such that
\[ \Pr(\Tcouple > n/3, \ N_b(n) \le an) \le K \exp(-\rho n), \ n = 1, 2, 3, \ldots  .\]
Using this choice of $a$ above, 
\[  \Pr(\Icouple > n) \le (1-p_0)^m + 
 \Oexp(n) 
\] \[
+   \Pr( \Icouple > n,   \Tcouple \le n/3)  + \Pr(\Icouple > n,  \mbox{ event (\ref{zet}) fails}) 
 \]
 where $\Oexp(n)$ denotes a ``vanishing probability" sequence which is $O( \rho^n)$ as $n \to \infty$ for some $\rho < 1$. 
 Now  note that elementary large deviation bounds for the rate-$2$ PPP $(\tau_i)$ show that 
 \begin{equation}
 \Pr( \Icouple > n,   \Tcouple \le n/3), \  \Pr( \Icouple \le  n,   \Tcouple > n), \ \Pr(\tau_n \le n/3) 
 \mbox{ are all } \Oexp(n).
 \label{eq41}
 \end{equation}
Choosing $m = \lfloor an/2 \rfloor$  we find
\[  \Pr(\Icouple > n) \le 
\Oexp(n) +
 \Pr( \mbox{ event (\ref{zet}) fails}) , \  n  = 1,2, 3, \ldots
 \]
where the $\Oexp(n)$ term does not depend on 
 $|| z^1_0 - z^2_0 ||$.
From the definition of event (\ref{zet}) and the choice of $m$
\begin{equation}
  \Pr( \mbox{ event (\ref{zet}) fails}) 
\le
\Pr( \tau_j \le 2 \log \sfrac{|| z^1_0 - z^2_0 ||} {\alpha} ) 
\mbox{ for } j = \lfloor an/2 \rfloor - 1 .
\label{tlog}
\end{equation}
From the final term in (\ref{eq41}) there exists a constant $\beta$ such that 
\[
  \Pr( \mbox{ event (\ref{zet}) fails}) = \Oexp(n) \mbox{ for } n > \beta  \log^+ || z^1_0 -  z^2_0 || .
  \] 
  So now 
  \[  \Pr(\Icouple > n) \le  \Oexp(n) \ \mbox{ for } n > \beta  \log^+ || z^1_0 -  z^2_0 || . \]
Because 
$\Pr( \Tcouple > n) \le  \Pr(\Icouple > n) + \Pr( \Icouple \le  n,   \Tcouple > n)$ 
we have established Proposition \ref{Pcouple}.

 \section{Proof of the main theorems}
 \subsection{Notation}
 \label{sec:Nota}
 In this section we use the preceding bounds to prove Theorems \ref{T:process} and \ref{T:limit}. 
 Recall some definitions from section \ref{sec:outline}.
 $\MM(\Reals^2)$ denotes the space of finite measures on $\Reals^2$, equipped with the usual topology of weak convergence. 
 For a particle $\xi$, the time-$t$ ancestor is denoted $\ancestor(t,\xi)$, and 
 $\descend(t_1,t_2,\zeta)$ denotes the set of particles born before $t_2$ whose time-$t_1$ ancestor is $\zeta$.
 And for $t_1  \le t_2$ and $\zeta \in \Xi_{\le t_1}$ 
\begin{eqnarray}
 \mbox{ $\mu_{t_1,t_2,\zeta}$ 
is the measure $\mu$ putting weight $e^{-t_2}$} \nonumber\\
\mbox{  on the position of each particle in $\descend(t_1,t_2,\zeta)$.} \label{def:mu3}
\end{eqnarray}
Note that given $\Xi_{\le t_1}$, the
``marks"  $(\mu_{t_1,t_2,\zeta}, \ \zeta \in \Xi_{\le t}) $ are still  {\em random} elements of the space  $\MM(\Reals^2)$, 
whose distributions depend on all $\Xi_{\le t_1}$ and are dependent as $\zeta$ varies.

If we use $\Xi_{\le t}$ to define a translation-invariant marked PPP of the form $\{(\xi, m^+(\xi)), \xi \in \Xi_{\le t}\}$ 
with non-negative real marks $m^+(\xi)$, then  there is a 
spatial average rate of mark values,  which we will write as
\[  \ave(m^+(\xi): \xi \in \Xi_{\le t}) \]
defined as the value of $a$ such that
 \[ \Ex \sum_{\xi \in \Xi_{\le t}, \ z_\xi \in B} m^+(\xi) =  a \times \area(B) ,   \quad B \subset \Reals^2 . \]
For instance we have, for $t_1 < t_2$, 
\begin{equation}
\ave(| \mu_{t_1,t_2,\xi}|: \xi \in \Xi_{\le t_1}) = 1 
\label{ave=1}
\end{equation}
where $|\mu|$ denotes the total mass of $\mu$.

The self-similarity property of the underlying space-time PPP $\bm{\Xi}$ allows us to write down exact 
self-similarity properties for our marked point processes. 
In particular, the action of the scaling map $z \to e^{-t/2} z$ on $\Reals^2$, applied to the distribution 
of $\{(z_\xi, \mu_{t_1,t_2,\xi}), \ \xi \in \Xi_{\le t_1} \}$, 
gives a distribution which coincides with the distribution obtained from 
$\{(z_\xi, \mu_{t_1+ t, t_2+t,\xi}), \ \xi \in \Xi_{\le t_1 +t} \}$ under the action of rescaling weights
$\mu \to  e^{-t} \mu$.
These self-similarity properties allow us to take previous
results, which were stated in the context of time decreasing from $0$ to $-t$, and rewrite them
in the context of time decreasing from $t$ to $0$ and in the notation above.  
These rewritten results and simple consequences are recorded as Corollaries \ref{C:distz} -- \ref{C:3} below.

As a first example, Proposition \ref{P1} shows that for $t > 0$
\[ \ave( ||z_{\ancestor(-t, \xi)} - z_\xi|| \ : \ \xi \in \Xi_{\le 0} ) \le K e^{t/2} \]
where $K = \int_0^\infty G(r) dr < \infty$.  
Using self-similarity this implies
\begin{Corollary}
\label{C:distz}
For $0 \le t_0 \le t$,
\[ \ave( ||z_{\ancestor(t_0, \xi)} - z_\xi|| \ : \ \xi \in \Xi_{\le t} ) \le K e^t e^{-t_0/2} .\]
\end{Corollary}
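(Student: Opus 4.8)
The plan is to treat the displayed time-$0$ estimate $\ave(\|z_{\ancestor(-s,\xi)} - z_\xi\| : \xi \in \Xi_{\le 0}) \le K e^{s/2}$ (valid for every $s>0$) as a base case, and then transport it to an arbitrary time horizon $t$ by the exact space-time self-similarity of $\bm{\Xi}$. The base case itself is immediate from Proposition \ref{P1}: conditional on a particle at a given position, the expected displacement equals $\int_0^\infty \Pr(\|z_{\ancestor(-s,\xi)} - z_\xi\| > u)\,du$, and the substitution $u = re^{s/2}$ bounds this by $e^{s/2}\int_0^\infty G(r)\,dr = K e^{s/2}$, with $K<\infty$ because $G$ is non-increasing and $\int_0^\infty rG(r)\,dr<\infty$ forces $G(r)=o(r^{-2})$. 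Since $\Xi_{\le 0}$ has unit intensity, this conditional bound is exactly the asserted $\ave$ bound.

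For the self-similarity step I would introduce, for fixed $t\ge 0$, the space-time map $\Psi_t(s,z) = (s+t,\,e^{-t/2}z)$. A one-line change of variables shows $\Psi_t$ preserves the mean measure $e^s\,ds\,dz$, so $\Psi_t(\bm{\Xi}) \ed \bm{\Xi}$, and it carries $\Xi_{\le 0}$ onto $\Xi_{\le t}$. Because the parent relation is defined purely by nearest-neighbour Euclidean distance among earlier particles, and $\Psi_t$ is a spatial similarity composed with an order-preserving time shift, the whole genealogy is equivariant: for $\xi\in\Xi_{\le 0}$ with image $\xi' = \Psi_t(\xi)\in\Xi_{\le t}$ one has $\ancestor(t_0,\xi') = \Psi_t(\ancestor(t_0-t,\xi))$, and hence $\|z_{\ancestor(t_0,\xi')} - z_{\xi'}\| = e^{-t/2}\,\|z_{\ancestor(t_0-t,\xi)} - z_\xi\|$.

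It then remains to track how $\ave$ responds to the two scalings. Using $\Psi_t(\bm\Xi)\ed\bm\Xi$ together with the equivariance above, for any test region $B$
\[
\begin{aligned}
\Ex\!\!\sum_{\xi'\in\Xi_{\le t},\,z_{\xi'}\in B}\!\!\|z_{\ancestor(t_0,\xi')} - z_{\xi'}\|
&= e^{-t/2}\,\Ex\!\!\sum_{\xi\in\Xi_{\le 0},\,z_\xi\in e^{t/2}B}\!\!\|z_{\ancestor(t_0-t,\xi)} - z_\xi\| \\
&= e^{-t/2}\,a_0\,\area(e^{t/2}B),
\end{aligned}
\]
where $a_0 = \ave(\|z_{\ancestor(t_0-t,\xi)} - z_\xi\| : \xi\in\Xi_{\le 0})$. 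Since $\area(e^{t/2}B) = e^t\,\area(B)$, this yields $\ave(\cdots:\Xi_{\le t}) = e^{t/2}a_0$. Applying the base case with $s = t-t_0\ge 0$ (so that $t_0-t=-(t-t_0)$ is genuinely a past time and Proposition \ref{P1} applies) gives $a_0\le Ke^{(t-t_0)/2}$, whence $\ave(\cdots:\Xi_{\le t})\le e^{t/2}\cdot Ke^{(t-t_0)/2} = Ke^{t}e^{-t_0/2}$, as claimed. The only real subtlety is this exponent bookkeeping — combining the intensity factor $e^t$ picked up through $\area(e^{t/2}B)$ with the distance-scaling factor $e^{-t/2}$ — together with verifying the genealogical equivariance of $\Psi_t$; everything else is routine.
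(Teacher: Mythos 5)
Your proof is correct and takes essentially the same route as the paper: Proposition \ref{P1} yields the time-$0$ bound $\ave(\|z_{\ancestor(-s,\xi)}-z_\xi\| : \xi\in\Xi_{\le 0})\le Ke^{s/2}$ with $K=\int_0^\infty G(r)\,dr<\infty$, and the space-time self-similarity of $\bm{\Xi}$ transports this to the time-$t$ statement. The paper leaves the transport implicit, so your explicit bookkeeping --- the $e^{-t/2}$ distance-scaling factor against the $e^{t}$ intensity factor from $\area(e^{t/2}B)$, combining to $e^{t/2}\cdot Ke^{(t-t_0)/2}=Ke^{t}e^{-t_0/2}$ --- is exactly the intended argument.
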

Next, the fact that a set of cardinality $k$ contains $k(k-1)$ ordered 
distinct pairs gives the first identity below, and the second follows from self-similarity. 
For $t > 0$
\begin{eqnarray}
\lefteqn{
\ave ( |\mu_{0,t,\zeta}| ( |\mu_{0,t,\zeta}| -e^{-t}) \ : \zeta \in \Xi_{\le 0} )
}
&& \label{43} \\
&=&
e^{-2t} 
\ave \left( \sum_{\xi_2 \in \Xi_{\le t}} \indic_{ \{ \ancestor(0,\xi_2) = \ancestor(0,\xi_1) \} } : \xi_1 \in \Xi_{\le t} \right)\nonumber  \\
&=&
e^{-t}
\ave \left( \sum_{\xi_2 \in \Xi_{\le 0}} \indic_{ \{ \ancestor(-t,\xi_2) = \ancestor(-t,\xi_1) \} } : \xi_1 \in \Xi_{\le 0} \right)  \nonumber \\
&=&
e^{-t} \int_\square \int_{\Reals^2} q(t; z_1,z_2) dz_2 dz_1 
= e^{-t} \int_{\Reals^2} q(t; \origin,z) dz \nonumber 
\end{eqnarray}
where $\square$ denotes the unit square and $q(t; z_1,z_2)$ is the probability, given that $\Xi_{\le 0}$ has particles at $z_1$ and $z_2$, that they have the same ancestor at time $-t$.
In the notation of Proposition \ref{P1} we have (using the triangle inequality)
$q(t; \origin,z) \le 2 G_t( e^{-t/2} || z ||/2)$. 
So from the conclusion of Proposition \ref{P1} we have
\[ e^{-t} \int_{\Reals^2} q(t; \origin,z) dz 
\le 2 e^{-t} \int_{\Reals^2} G( e^{-t/2} || z ||/2) dz 
= 2 \int_{\Reals^2} G( || z ||/2) dz < \infty \]
Using (\ref{ave=1}) for the $-e^t$ term in (\ref{43}) we have established
\begin{Corollary}
\label{C:tL2}
$\sup_{t > 0} \ave ( |\mu_{0,t,\zeta}|^2 \ : \zeta \in \Xi_{\le 0} ) < \infty  $.
\end{Corollary}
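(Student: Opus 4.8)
The plan is to peel off a first-moment term, which the normalisation (\ref{ave=1}) already controls, and to recognise the remaining factorial-moment term as the pair-correlation quantity computed in (\ref{43}). First I would use the pointwise identity $|\mu_{0,t,\zeta}|^2 = |\mu_{0,t,\zeta}|(|\mu_{0,t,\zeta}| - e^{-t}) + e^{-t}|\mu_{0,t,\zeta}|$, average over $\zeta \in \Xi_{\le 0}$, and bound the two resulting spatial averages separately, uniformly in $t > 0$.

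The linear term is immediate: by (\ref{ave=1}) we have $\ave(e^{-t}|\mu_{0,t,\zeta}| : \zeta \in \Xi_{\le 0}) = e^{-t}$, which is at most $1$ for every $t > 0$, so it contributes nothing problematic.

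For the factorial term I would invoke the chain of identities (\ref{43}). Its content is a Palm/counting step: since $\mu_{0,t,\zeta}$ places mass $e^{-t}$ on each of its descendants and a set of cardinality $k$ contains $k(k-1)$ ordered distinct pairs, the average $\ave(|\mu_{0,t,\zeta}|(|\mu_{0,t,\zeta}|-e^{-t}))$ equals $e^{-t}$ times the spatially-averaged number of ordered pairs of time-$0$ particles sharing a common time-$(-t)$ ancestor, which by translation invariance equals $e^{-t}\int_{\Reals^2} q(t;\origin,z)\,dz$, where $q(t;z_1,z_2)$ is the two-point coalescence probability. I would then reduce everything to Proposition \ref{P1} via the triangle inequality, which forces at least one of the two lines of descent to travel a distance at least $||z||/2$ to reach the shared ancestor, and hence gives $q(t;\origin,z) \le 2G_t(e^{-t/2}||z||/2) \le 2G(e^{-t/2}||z||/2)$.

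The decisive step is that the time-dependence then cancels exactly. After the change of variables $w = e^{-t/2}z$, whose Jacobian is $e^{t}$, the prefactor $e^{-t}$ disappears and $e^{-t}\int_{\Reals^2} q(t;\origin,z)\,dz \le 2\int_{\Reals^2} G(||w||/2)\,dw$, a constant free of $t$ and finite precisely because $\int_0^\infty r G(r)\,dr < \infty$ in Proposition \ref{P1}. Adding the two contributions yields $\sup_{t>0}\ave(|\mu_{0,t,\zeta}|^2 : \zeta \in \Xi_{\le 0}) \le 1 + 2\int_{\Reals^2} G(||w||/2)\,dw < \infty$. I expect no genuine obstacle here: the heavy lifting is all upstream, in Proposition \ref{P1} and in establishing the identity (\ref{43}); the single point that requires care is that this exact cancellation of $t$ is a manifestation of the self-similarity of $\bm{\Xi}$, and it is what upgrades the finite bound into one that is uniform over all $t > 0$.
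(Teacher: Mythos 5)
Your proof is correct and follows essentially the same route as the paper: the decomposition $|\mu|^2 = |\mu|(|\mu|-e^{-t}) + e^{-t}|\mu|$, the reduction of the factorial term to $e^{-t}\int_{\Reals^2} q(t;\origin,z)\,dz$ via (\ref{43}), the triangle-inequality bound $q(t;\origin,z)\le 2G(e^{-t/2}\|z\|/2)$ from Proposition \ref{P1}, and the scaling cancellation that makes the bound uniform in $t$ are all exactly the steps the paper uses.
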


Finally, self-similarity allows us to rewrite Proposition \ref{Pcouple}  as follows.
\begin{Corollary}
\label{C:3}
Let $s_0  \le \bar{s} \le s_1 \le s_2$.
Let $p(s_0,s_1,s_2; z_1,z_2)$ be the probability, given that $\Xi_{\le s_1}$ contains a particle at position $z_1$ 
and $\Xi_{\le s_2}$ contains a particle at position $z_2$, that these particles have different time-$s_0$ ancestors. 
Then
\[ p(s_0, s_1,s_2; z_1,z_2) \le K \exp(- \rho (\bar{s} - s_0)) \mbox{ provided } ||z_1 - z_2|| \le \sqrt{2}  e^{-\bar{s}/2}  \]
for the constants $K, \rho$ in Proposition \ref{Pcouple} .
\end{Corollary}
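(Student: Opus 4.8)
The plan is to read the Corollary as Proposition \ref{Pcouple} transported through the self-similarity of $\bm{\Xi}$, so that the whole task reduces to a change of variables together with some bookkeeping of the threshold and the exponent. No new probabilistic estimate is needed.

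First I would normalize by the spacetime map $(t,z) \mapsto (t - s_1,\, e^{s_1/2} z)$. This preserves the law of $\bm{\Xi}$: the time shift fixes the $dt$ factor, and the area contraction $e^{-s_1}$ coming from $z \mapsto e^{s_1/2}z$ exactly cancels the factor $e^{s_1}$ produced in $e^{t}\,dt\,dz$ by the shift. Under this map the particle at $z_1$ present at time $s_1$ becomes one present at time $0$, the particle at $z_2$ present at time $s_2$ becomes one present at time $t_0 := s_2 - s_1 \ge 0$, their separation becomes $e^{s_1/2}\,||z_1 - z_2||$, and the target ancestor time $s_0$ becomes $s_0 - s_1 \le 0$. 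By Lemma \ref{L:line_descent_2} the joint line of descent is then the coupled EA process started from these two positions, and, because coalescence is monotone in reversed time, the event that the two particles have different time-$s_0$ ancestors is exactly $\{\Tcouple > s_1 - s_0\}$.

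I would then apply Proposition \ref{Pcouple} with $||z^1_0 - z^2_0|| = e^{s_1/2}\,||z_1 - z_2||$. The hypothesis $||z_1 - z_2|| \le \sqrt2\,e^{-\bar s/2}$ gives $e^{s_1/2}\,||z_1 - z_2|| \le \sqrt2\,e^{(s_1 - \bar s)/2}$, hence $\log^+ ||z^1_0 - z^2_0|| \le \tfrac12(s_1 - \bar s) + \tfrac12 \log 2$. The Proposition yields $\Pr(\Tcouple > s_1 - s_0) \le K \exp(-\rho(s_1 - s_0))$ as soon as $s_1 - s_0 > \beta \log^+ ||z^1_0 - z^2_0||$; and since $s_1 \ge \bar s$ the right-hand side is at most $K\exp(-\rho(\bar s - s_0))$, which is the assertion. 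The role of the calibrated constant $\sqrt2$ and of the factor $\tfrac12$ in the scaling exponent is that, once the bound on $\log^+||z^1_0 - z^2_0||$ is substituted, the $(s_1 - \bar s)$ contributions on the two sides of the threshold inequality cancel and it collapses to a condition of the form $\bar s - s_0 > c$ with $c$ an absolute constant.

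The complementary regime $\bar s - s_0 \le c$ is handled trivially: there $K\exp(-\rho(\bar s - s_0)) \ge K\exp(-\rho c)$, a quantity that for the $K,\rho$ at hand is at least $1$, so the bound is automatic because $p$ is a probability. I expect the only genuine friction to be this threshold bookkeeping — specifically, confirming that the traceback threshold really operates at the sharp logarithmic rate, so that the $(s_1 - \bar s)$ terms cancel exactly rather than leaving a residual proportional to $s_1$; this is precisely what forces the distance hypothesis to carry the factor $\sqrt2$, and it is the step where one must reconcile the value of $\beta$ with the scaling. Everything past that is the scaling identity, the monotonicity of coalescence, and the elementary inequality $s_1 \ge \bar s$, which degrades the exponent in the favorable direction.
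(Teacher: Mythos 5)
Your reduction is exactly the one the paper intends (the paper itself offers nothing beyond ``self-similarity allows us to rewrite Proposition \ref{Pcouple} as follows''), and the normalization at $s_1$, the identification of the event with $\{\Tcouple > s_1 - s_0\}$, and the final use of $s_1 \ge \bar s$ are all correct. The gap sits precisely at the point you flag as ``friction'': the claim that the $(s_1-\bar s)$ contributions cancel in the threshold inequality. Substituting $\log^+ \| z^1_0 - z^2_0\| \le \sfrac{1}{2}(s_1-\bar s) + \sfrac{1}{2}\log 2$ into the requirement $s_1 - s_0 > \beta \log^+\|z^1_0-z^2_0\|$ and writing $s_1-s_0 = (s_1-\bar s)+(\bar s-s_0)$, the condition becomes
\[ (\bar s - s_0) \;>\; \left(\sfrac{\beta}{2}-1\right)(s_1-\bar s) \;+\; \sfrac{\beta}{2}\log 2 , \]
so the cancellation you invoke happens only if $\beta = 2$. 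Proposition \ref{Pcouple} guarantees no such value: any admissible $\beta$ must in fact be at least $2$ (since, as noted at the start of Section \ref{sec:joint}, $\Tcouple \ge (2-o(1))\log \|z^1_0-z^2_0\|$ with probability tending to one, the bound $K e^{-\rho t}$ cannot hold below $t = (2-\eps)\log\|z^1_0-z^2_0\|$), and the proof of Proposition \ref{Pcouple} actually produces $\beta$ of order $8/a$ for the small constant $a$ of Proposition \ref{P:diamcoup}, hence $\beta \gg 2$. For $\beta>2$ your applicability condition fails whenever $s_1 - \bar s$ is large relative to $\bar s - s_0$ --- and that is exactly the regime in which the Corollary is used later (Lemma \ref{t00} sends $t_1,t_2 \to \infty$ with $t_0$ fixed, and the partition argument sends $t\to\infty$ with $\bar s$ fixed). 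So the direct rescaling does not yield the statement as it is actually needed.

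A correct argument has to pass through time $\bar s$ rather than $s_1$: the two particles have different time-$s_0$ ancestors iff their time-$\bar s$ ancestors do, Proposition \ref{P1} (rescaled) puts each time-$\bar s$ ancestor within $r e^{-\bar s/2}$ of its descendant's position except on an event of probability at most $G(r)$, so the two time-$\bar s$ ancestors are at separation $O(e^{-\bar s/2})$ with controlled tails, and Proposition \ref{Pcouple} applied from time $\bar s$ with an $O(1)$ rescaled separation (threshold now a constant) gives the decay $e^{-\rho(\bar s - s_0)}$ after integrating against $G$. This requires some care with the conditioning (the continuation from time $\bar s$ carries the excluded regions built so far), and it delivers the conclusion only with modified constants, not literally ``the constants $K,\rho$ in Proposition \ref{Pcouple}''; since the Corollary is used purely qualitatively this is harmless, but the extra step is not optional. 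Your treatment of the complementary regime $\bar s - s_0 \le c$ by enlarging $K$ is fine.
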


\subsection{Convergence of mark measures}
\label{sec:MPPs}
Here we will prove
\begin{Proposition}
\label{P:coupledist}
There exist $\MM(\Reals^2)$-valued  marks 
$(\mu_{0, \infty, \zeta}, \ \zeta \in \Xi_{\le 0})$ such that 
$\ave ( | \mu_{0, \infty, \zeta}|, \ \zeta \in \Xi_{\le 0}) = 1$ and
for all $\zeta \in \Xi_{\le 0}$
\[  \mu_{0,t,\zeta} \to \mu_{0, \infty, \zeta} \mbox{ in probability as } t \to \infty.
\]
\end{Proposition}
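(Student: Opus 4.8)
The plan is to prove Cauchy convergence in probability of the real-valued functionals $Y^f_t := \int f\,d\mu_{0,t,\zeta}$ for test functions $f$ ranging over a fixed countable convergence-determining class of bounded Lipschitz functions, and then to assemble these into convergence of $\mu_{0,t,\zeta}$ in the weak topology. Since $\MM(\Reals^2)$ with the weak topology is a Polish space, and since weak convergence of finite measures is vague convergence together with convergence of total mass, it suffices to show (i) each $Y^f_t$ is Cauchy in probability as $t\to\infty$, (ii) the total masses $|\mu_{0,t,\zeta}|$ converge, and (iii) a tightness bound ruling out escape of mass to infinity. All statements are made under the Palm distribution conditioning on $\zeta\in\Xi_{\le 0}$, say at the origin. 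The basic tool is the consistency decomposition $\mu_{0,t+s,\zeta}=\sum_{\xi\in\descend(0,t,\zeta)}\mu_{t,t+s,\xi}$, which follows from transitivity of ancestry and yields, for the increment, $Y^f_{t+s}-Y^f_t=\sum_{\xi\in\descend(0,t,\zeta)}\big(\int f\,d\mu_{t,t+s,\xi}-e^{-t}f(z_\xi)\big)$.

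I would split each summand into a displacement term $\int(f(z)-f(z_\xi))\,d\mu_{t,t+s,\xi}(z)$ and a mass-redistribution term $f(z_\xi)(|\mu_{t,t+s,\xi}|-e^{-t})$. For the displacement term, the Lipschitz bound gives $L\int\|z-z_\xi\|\,d\mu_{t,t+s,\xi}(z)$; summing over $\xi$ recovers $e^{-(t+s)}\sum_{\eta\in\descend(0,t+s,\zeta)}\|z_\eta-z_{\ancestor(t,\eta)}\|$, whose Palm expectation is controlled by Corollary \ref{C:distz} to be $O(e^{-t/2})$, uniformly in $s$. Finiteness of all moments of the variable $\chi$ in (\ref{def:Gr}) upgrades this to an $L^2$ bound if desired, so the displacement contribution is summable along $t=1,2,\dots$ and causes no trouble. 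The uniform second-moment bound of Corollary \ref{C:tL2} simultaneously gives uniform integrability of the total masses, and Proposition \ref{P1} localizes the descendants of $\zeta$ within $O(1)$ of the origin with high probability, which delivers the tightness (iii).

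The main obstacle is the mass-redistribution term, equivalently convergence of the total mass (take $f\equiv 1$). Here $|\mu_{0,t,\zeta}|$ is \emph{not} a martingale: a short computation gives $\FF_t$-conditional drift rate $\area(V_t)-|\mu_{0,t,\zeta}|$, where $V_t$ is the union of the time-$t$ Voronoi cells of the color-$\zeta$ particles. Thus $|\mu_{0,t,\zeta}|$ is a quasimartingale, and the natural route is to bound its conditional variation, i.e. to show $\int_0^\infty \Ex\big|\area(V_t)-|\mu_{0,t,\zeta}|\big|\,dt<\infty$ (and the analogous $\int_0^\infty\Ex\big|\int_{V_t}f\,dz-Y^f_t\big|\,dt<\infty$ for general $f$), after which Rao's quasimartingale convergence theorem yields a.s. and $L^1$ convergence. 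The difference $\area(V_t)-|\mu_{0,t,\zeta}|=\sum_{\text{cells }c}(\area(c)-e^{-t})$ is a sum of roughly mean-zero cell-area fluctuations over $\asymp e^t$ cells, so heuristically it is $O(e^{-t/2})$ in $L^1$ (integrable), and the second-moment control needed is exactly what Corollary \ref{C:tL2} supplies. I expect the genuinely delicate point to be the systematic contribution of the \emph{boundary} cells of the cluster, whose number is governed by the conjecturally fractal boundary length that the paper cannot control; this is presumably why one settles for convergence in probability rather than a.s. convergence. Accordingly I would fall back to an $L^2$-Cauchy estimate: set $\phi(t)=\sup_{s\ge 0}\Ex[(Y^f_{t+s}-Y^f_t)^2]$ and use the decomposition together with self-similarity (which reduces the time-$t$-to-$t+s$ pieces to rescaled copies of time-$0$ pieces) and Corollary \ref{C:tL2} to force $\phi(t)\to 0$, bypassing the boundary issue while still only yielding convergence in probability.

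Finally, having Cauchy convergence in probability of every $Y^f_t$, I would pass to the limit along the countable determining class simultaneously, invoke the tightness from Proposition \ref{P1}, and conclude that $\mu_{0,t,\zeta}$ converges in probability in $\MM(\Reals^2)$ to a limit $\mu_{0,\infty,\zeta}$. The identity $\ave(|\mu_{0,\infty,\zeta}|)=1$ then follows from (\ref{ave=1}) together with uniform integrability (Corollary \ref{C:tL2}), since $L^1$-convergence of the total masses preserves the spatial average.
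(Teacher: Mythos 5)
Your treatment of the displacement term is essentially the paper's: the bounded Lipschitz metric plus Corollary \ref{C:distz} gives an $O(e^{-t/2})$ contribution uniformly in $s$, and the final assembly (countable determining class, tightness from Proposition \ref{P1}, uniform integrability from Corollary \ref{C:tL2} to pass (\ref{ave=1}) to the limit) is fine. The gap is in the one step you yourself flag as the main obstacle: convergence of the total masses $|\mu_{0,t,\zeta}|$. Your first route (quasimartingale with conditional drift $\area(V_t)-|\mu_{0,t,\zeta}|$) founders, as you note, on the boundary cells. But your fallback --- ``set $\phi(t)=\sup_{s\ge 0}\Ex[(Y^f_{t+s}-Y^f_t)^2]$ and use the decomposition together with self-similarity and Corollary \ref{C:tL2} to force $\phi(t)\to 0$'' --- is not a proof; it is a restatement of what must be shown. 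Corollary \ref{C:tL2} only gives a \emph{uniform bound} on second moments. Writing $|\mu_{0,t+s,\zeta}|-|\mu_{0,t,\zeta}|=\sum_{\xi\in\descend(0,t,\zeta)}(|\mu_{t,t+s,\xi}|-e^{-t})$, the diagonal terms of the second moment are indeed $O(e^{-t})$ by self-similarity and Corollary \ref{C:tL2}, but there are order $e^{2t}$ cross terms $\Ex[(|\mu_{t,t+s,\xi_1}|-e^{-t})(|\mu_{t,t+s,\xi_2}|-e^{-t})]$ and each must be shown to be $o(e^{-2t})$ on average. Equivalently, you need $\ave(|\mu_{0,t_1,\zeta}|\,|\mu_{0,t_2,\zeta}|)$ to converge, and by the computation at (\ref{43}) this is governed by the probability $p(0,t_1,t_2;z_1,z_2)$ that two nearby particles at two late times have \emph{different} time-$0$ ancestors. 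Nothing in your argument controls that quantity; the decorrelation does not come from self-similarity or from second-moment bounds.

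The missing ingredient is precisely the coalescence estimate, Proposition \ref{Pcouple} (in the form of Corollary \ref{C:3}), which you never invoke for this step. The paper's route (Proposition \ref{P:couplenew} and Lemma \ref{t00}) avoids variances entirely: Corollary \ref{C:3} bounds the expected number of pairs $(\xi_1,\xi_2)$ with $\xi_i\in\Xi_{\le t_i}$ in a small square $\square(t_0)$ having different time-$0$ ancestors by $e^{t_1+t_2-2t_0}K e^{-\rho t_0}$; an elementary counting lemma about equivalence classes (Lemma \ref{L:IJf}) then shows that a $(1-\eps)$-fraction of the particles at both times share a single ancestor, whence $\ave(\min(|\mu_{0,t_1,\zeta}|,|\mu_{0,t_2,\zeta}|))\to 1$; combined with (\ref{ave=1}) this gives the Cauchy property in the $\ave$-$L^1$ sense directly. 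If you insert Corollary \ref{C:3} into your cross-term estimate you can rescue the $L^2$ version, but as written the proposal does not establish the total-mass convergence, and that is the heart of the proposition.
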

The argument is slightly subtle: we cannot directly compare $\mu_{0,t_1,\zeta}$ and $\mu_{0,t_2,\zeta}$  because the relative numbers of time-$t_2$ descendants of different time-$t_1$ 
ancestors are different (as in a supercritical branching process), so the measure on time-$t_1$ descendants derived from the uniform measure on time-$t_2$ descendants is not uniform.
Instead we first prove convergence of total masses.

\begin{Proposition}
\label{P:couplenew}
There exist real-valued marks 
$(m_{0, \zeta}, \ \zeta \in \Xi_{\le 0})$ such that 
$\ave (m_{0, \zeta} : \ \zeta \in \Xi_{\le 0}) = 1$  and
for all $\zeta \in \Xi_{\le 0}$
\[ | \mu_{0,t,\zeta}| \to m_{0, \zeta} \mbox{ in probability as } t \to \infty.
\]
\end{Proposition}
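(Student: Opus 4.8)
The plan is to work under the Palm distribution $\Pr^{(0)}$ in which we condition on $\Xi_{\le 0}$ having a particle $\zeta$ at the origin, and to prove that the total masses $M_t := |\mu_{0,t,\zeta}|$ form a Cauchy sequence in $L^2(\Pr^{(0)})$ as $t\to\infty$. Writing $\Ex^{(0)}$ for the corresponding expectation, the Campbell--Mecke relation identifies the spatial average $\ave$ with $\Ex^{(0)}$ (the intensity of $\ZZ_{\le 0}$ being $1$), so (\ref{ave=1}) gives $\Ex^{(0)}[M_t]=1$ for every $t$, while Corollary \ref{C:tL2} gives $\sup_t \Ex^{(0)}[M_t^2]<\infty$. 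A sequence that is Cauchy in $L^2(\Pr^{(0)})$ converges in $L^2$, hence in probability, to a limit $m_{0,\zeta}$; the uniform integrability supplied by the second-moment bound then lets us pass $\Ex^{(0)}[M_t]=1$ to the limit, giving $\ave(m_{0,\zeta})=\Ex^{(0)}[m_{0,\zeta}]=1$. So everything reduces to the Cauchy property.

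To prove $\Ex^{(0)}[(M_{t_2}-M_{t_1})^2]\to 0$ as $t_1,t_2\to\infty$, I would expand into $\Ex^{(0)}[M_{t_2}^2]-2\Ex^{(0)}[M_{t_1}M_{t_2}]+\Ex^{(0)}[M_{t_1}^2]$ and show that all three mixed moments share a common limit $L$. For $t_1\le t_2$, applying Mecke's formula to each of the two descendant sums gives
\[ \Ex^{(0)}[M_{t_1}M_{t_2}] = e^{-t_2} + \int_{\Reals^2}\int_{\Reals^2} p_{t_1,t_2}(z_1,z_2)\,dz_1\,dz_2, \]
where $p_{t_1,t_2}(z_1,z_2)$ is the probability, conditional on $\zeta$ at the origin, a particle $\xi_1$ at $z_1$ present by time $t_1$, and a particle $\xi_2$ at $z_2$ present by time $t_2$, that $\ancestor(0,\xi_1)=\ancestor(0,\xi_2)=\zeta$; the $e^{-t_2}$ term is the diagonal contribution $\xi_1=\xi_2$ and vanishes. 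Thus it suffices to show that $\int\int p_{t_1,t_2}$ converges to a limit $L$ as $t_1,t_2\to\infty$, and that this double limit coincides with the diagonal limit obtained from $\int\int p_{t,t}$.

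This convergence is the heart of the matter. The event defining $p_{t_1,t_2}$ requires the two lines of descent from $\xi_1,\xi_2$ to coalesce at some time $\ge 0$ and for the resulting common time-$0$ ancestor to be $\zeta$. I would control it by combining three ingredients. First, Proposition \ref{P1} forces each line of descent, traced from time $t_i$ down to a fixed window just above time $0$, to stay within distance of order $e^{t_i/2}$ of its endpoint, which (after the self-similar rescaling $z\mapsto e^{-t_i/2}z$ of the integration variables, as in section \ref{sec:Nota}) bounds the effective range of $z_1,z_2$ and, via the tail $\int_0^\infty r G(r)\,dr<\infty$, furnishes an integrable dominating function. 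Second, the coalescence tail bound Corollary \ref{C:3} shows that once the two rescaled lines are within $O(e^{-s/2})$ at a time $s$ they coalesce before any fixed earlier time with overwhelming probability. Third, self-similarity makes the rescaled pair of descent lines, observed in a fixed time window near $0$, converge in distribution. The role of the second ingredient is decisive: once both $t_1,t_2$ are large, the parts of the two descent lines lying above the window have, with overwhelming probability, either already merged or separated for good, so the limiting behavior near time $0$ does not remember the starting times $t_1,t_2$, and this is exactly what forces the off-diagonal and diagonal limits to agree. Dominated convergence then delivers the common value $L$.

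Assembling these, $\Ex^{(0)}[(M_{t_2}-M_{t_1})^2]\to L-2L+L=0$, which is the required Cauchy property. I expect the main obstacle to be the ``forgetting'' step above: proving rigorously that $p_{t_1,t_2}(z_1,z_2)$ has a genuine double limit as $t_1,t_2\to\infty$ equal to the diagonal limit, uniformly enough to interchange limit and integral. The difficulty is that the natural spatial scale of the integral grows like $e^{\max(t_1,t_2)/2}$, so the argument must be carried out after rescaling by self-similarity, and Corollary \ref{C:3} must be applied with care to show that the contribution of pairs not yet coalesced by the observation window is negligible.
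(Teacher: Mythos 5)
Your reduction to an $L^2$-Cauchy property under the Palm measure is set up correctly (the identification of $\ave$ with the Palm expectation, the use of (\ref{ave=1}) and Corollary \ref{C:tL2} for the mean and uniform integrability, and the Mecke expansion of $\Ex^{(0)}[M_{t_1}M_{t_2}]$ into a diagonal term plus $\int\!\!\int p_{t_1,t_2}$ are all sound). But the step you yourself flag as the heart of the matter --- that $\int\!\!\int p_{t_1,t_2}(z_1,z_2)\,dz_1\,dz_2$ has a double limit $L$ equal to the diagonal limit --- is a genuine gap, not a technicality. The ingredient you invoke for it, ``self-similarity makes the rescaled pair of descent lines, observed in a fixed time window near $0$, converge in distribution,'' is not a consequence of self-similarity: self-similarity relates the law at time $t_1$ to the law at time $0$ but says nothing about convergence as $t_1\to\infty$. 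Convergence in distribution of the time-$0$ ancestor of a time-$t_1$ particle is essentially the dual of the statement being proved (it is what produces the densities $f_\zeta$ in the construction of the partition), and neither Proposition \ref{P1} (which gives only tightness of the ancestor displacement) nor Corollary \ref{C:3} (which gives coalescence of two nearby lines) supplies it. The related claim that two lines far apart in the window have ``separated for good'' is also unsupported: the paper proves no lower bound on coalescence distances, so you cannot rule out late mergers. Without the existence of the common limit $L$, the identity $L-2L+L=0$ has nothing to stand on.

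The paper avoids this entirely by working in $L^1$ with a one-sided bound: since $\ave(M_{t_1})=\ave(M_{t_2})=1$, one has $\ave(|M_{t_1}-M_{t_2}|)=2-2\,\ave(\min(M_{t_1},M_{t_2}))$, so it suffices to show $\ave(\min(M_{t_1},M_{t_2}))\to 1$. This follows from showing that, with probability close to $1$, almost all particles of $\Xi_{\le t_1}$ and of $\Xi_{\le t_2}$ lying in a small square $\square(t_0)$ share a \emph{single} time-$0$ ancestor (Lemma \ref{t00}), which is proved by counting pairs with different ancestors via Corollary \ref{C:3} and Markov's inequality. That argument needs only the coalescence tail bound and no distributional convergence of descent lines. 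If you want to salvage your second-moment route, you would need to first prove monotone-type control such as $\Ex^{(0)}[M_{t_1}(M_{t_2}-M_{t_1})]\to 0$ directly; note that $M_t$ is not a martingale (the paper's remark about Voronoi areas being ``almost'' martingales concerns a different quantity), so no soft argument is available there either.
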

\begin{proof}
Write $\square$ for the unit square and $\square(t)$ for the scaled square of area $e^{-t}$.
For $0 <t_0 <  t_1 < t_2$  and $\eps > 0$ write $A(t_0, t_1,t_2,\eps)$ for the event
\begin{quote}
there exist at least $(1 - \eps)e^{t_1-t_0}$ particles of $\Xi_{\le t_1}$ in $\square(t_0)$, and 
at least $(1 - \eps)e^{t_2-t_0}$ particles of $\Xi_{\le t_2}$ in $\square(t_0)$, 
all with the same time-$0$ ancestor.
\end{quote}
Define
\begin{equation}
 \rho(t_0, \eps) = \liminf_{t_1 \to \infty} \liminf_{t_2 \to \infty} \Pr(A(t_0,t_1,t_2,\eps)) . 
 \label{reps}
 \end{equation}
We will show
\begin{Lemma}
$\lim_{t_0 \to \infty} \rho(t_0, \eps)  = 1 
\mbox{ for each } \eps > 0 .$
\label{t00}
\end{Lemma}
Granted that, consider
\[
a(t_1,t_2) :=
 \ave( \ \min( \ | \mu_{0,t_1,\zeta}| \ , \ | \mu_{0,t_2,\zeta}|  \ ) \ , \ \zeta \in \Xi_{\le 0})  \ \le 1 .
\]
By averaging over area-$t_0$ squares in $\Reals^2$, 
\[ a(t_1,t_2) \ge (1 - \eps)  \Pr(A(t_0,t_1,t_2,\eps)) . \]
So Lemma \ref{t00} implies
\begin{equation}
\liminf_{t_1 \to \infty} \liminf_{t_2 \to \infty} a(t_1,t_2) = 1  .
\label{at1t2}
\end{equation}
By the triangle inequality and (\ref{ave=1}),
\[ \ave( \left| \ | \mu_{0,t_1,\zeta}| \ - \ | \mu_{0,t_2,\zeta}|  \  \right|  : \zeta \in \Xi_{\le 0}) 
\le 2 a(t_1,t_2) .
\]
Then (\ref{at1t2}) and the Cauchy criterion imply there exist limits $m_{0,\zeta} $ for which 
\[ \lim_{t \to \infty} \ave( \left| \ | \mu_{0,t,\zeta}| \ - \  m_{0,\zeta} \  \right|  : \zeta \in \Xi_{\le 0}) = 0 . \]
Finally, Corollary \ref{C:tL2} provides the ``uniform integrability" condition needed to pass (\ref{ave=1}) to the limit to obtain
$\ave (m_{0, \zeta} : \ \zeta \in \Xi_{\le 0}) = 1$.  
This establishes Proposition \ref{P:couplenew}.
\end{proof}

\begin{proof}

[of Lemma  \ref{t00}]
Write  $\Xi^{\square(t_0)}_{\le t}$ for the restriction of $\Xi_{\le t}$ to particles within $\square(t_0)$.
We can upper bound the mean number of pairs $(\xi_1, \xi_2)$ in $\square(t_0)$ with $\xi_i \in \Xi_{\le t_i}$ and with different time-$0$ ancestors, as follows.
Write
\[ M(t_0,t_1,t_2) := 
\sum_{\xi_1 \in \Xi^{\square(t_0)}_{\le t_1}}   \sum_{\xi_2 \in \Xi^{\square(t_0)}_{\le t_2}     }  
 \  \indic_{ \{ \ancestor(0,\xi_1) \neq   \ancestor(0,\xi_2)  \}} 
. \]
Then
\[
  \Ex M (t_0,t_1,t_2) 
= e^{t_1} e^{t_2} \ 
\int_{\square(t_0)} \int_{\square(t_0)} p(0,t_1,t_2,z_1,z_2) \ dz_1 dz_2
\]
where $p(0,t_1,t_2,z_1,z_2)$ is the probability, given that $\Xi_{\le t_1}$ has a particle at $z_1$ and $\Xi_{\le t_2}$ has a particle at $z_2$, 
that these two particles have different time-$0$ ancestors. 
But Corollary \ref{C:3} shows that when $z_1$ and $z_2$ are in $\square(t_0)$ we have
$ p(0,t_1,t_2,z_1,z_2) \le K \exp(- \rho t_0) $.
So 
\begin{equation}
 \Ex M(t_0,t_1,t_2) \le e^{t_1+t_2 - 2t_0} \ K \exp(- \rho t_0) . 
\label{Mttt}
\end{equation}

We now quote an elementary lemma.
\begin{Lemma}
\label{L:IJf}
Let $I \subset J$ be finite sets, let $\sim$ be an equivalence relation on $J$ and 
let $B$ be a maximal-cardinality set in the 
corresponding partition of $J$.  Let
\[ \rho = \frac
{ | \{ (i,j) \in I \times (J \setminus I) \  : \ i \not\sim j \}| }{|I| \ \cdot \ |J \setminus I|} . \]
Then 
$| B \cap I| \ge (1 - \rho) |I|$ and $| B \cap (J \setminus I) | \ge (1 - \rho)  |J \setminus I|$.
\end{Lemma}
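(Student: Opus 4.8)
The plan is to translate everything into the block sizes of the partition, reduce the two assertions to a single sign condition, and then isolate the one fact that really carries the argument.

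First I would pass to pure counting. Enumerate the blocks of $\sim$ as $B_1,\dots,B_k$, labelled so that $B=B_1$ is of maximal cardinality, and set $a_m=|B_m\cap I|$ and $b_m=|B_m\cap(J\setminus I)|$, so that $|I|=\sum_m a_m$ and $|J\setminus I|=\sum_m b_m$. A pair $(i,j)\in I\times(J\setminus I)$ has $i\sim j$ exactly when both lie in one block, so the number of same-class cross pairs is $\sum_m a_m b_m$, whence
\[ \textstyle\sum_m a_m b_m \;=\; (1-\rho)\,|I|\,|J\setminus I| . \]
Dividing by $|J\setminus I|=\sum_m b_m$ turns the first target inequality $|B\cap I|\ge(1-\rho)|I|$ into $a_1\sum_m b_m\ge\sum_m a_m b_m$, that is
\[ \textstyle\sum_m (a_1-a_m)\,b_m\;\ge\;0, \]
and symmetrically the second target becomes $\sum_m(b_1-b_m)\,a_m\ge0$. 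Both of these are immediate \emph{provided} the chosen block $B_1$ is richest in each coordinate separately, i.e. $a_1=\max_m a_m$ and $b_1=\max_m b_m$, for then every summand is nonnegative.

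Thus the statement reduces to the claim that the maximal-cardinality block $B_1$ also contains the most points of $I$ and the most points of $J\setminus I$. This is the step I expect to be the crux, because it does \emph{not} follow from $|B_1|=\max_m|B_m|$ alone; the smallness of $\rho$ must enter, and in fact one needs $\rho<\tfrac12$. Writing $p_m=a_m/|I|$ and $q_m=b_m/|J\setminus I|$, the displayed identity reads $\sum_m p_m q_m=1-\rho$; since $\sum_m p_m q_m\le\max_m p_m$ and $\sum_m p_m q_m\le\max_m q_m$, any $\rho<\tfrac12$ forces $\max_m p_m>\tfrac12$ and $\max_m q_m>\tfrac12$, so each maximum is attained at a unique block, say at $m_1$ and $m_2$ respectively. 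If $m_1\neq m_2$ then, using $q_{m_1}\le 1-q_{m_2}$ and bounding the remaining terms by $q_{m_2}(1-p_{m_1})$, one gets $\sum_m p_m q_m\le p_{m_1}+q_{m_2}-2p_{m_1}q_{m_2}<\tfrac12$, contradicting $\rho<\tfrac12$. Hence $m_1=m_2=:m^\ast$, so $a_{m^\ast}=\max_m a_m$ and $b_{m^\ast}=\max_m b_m$; adding these gives $|B_{m^\ast}|=\max_m|B_m|$, so $m^\ast=1$. This is exactly the coordinate-wise dominance needed above.

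Finally I would note that the hypothesis $\rho<\tfrac12$ costs nothing in the application to Lemma \ref{t00}: by Corollary \ref{C:3} and the bound (\ref{Mttt}) the relevant pair-fraction decays exponentially as $t_0\to\infty$, so it is well below $\tfrac12$ for the $t_0$ of interest, and the reduction applies. The only delicate point, then, is the coordinate-wise dominance of the largest block, which is the part I would write out carefully; the two sign conditions it yields finish the proof at once.
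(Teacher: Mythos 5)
The paper does not actually prove this lemma --- it is introduced with ``We now quote an elementary lemma'' and no argument is supplied --- so there is no proof of record to compare yours against; your write-up would in effect be supplying the missing proof. Your argument is correct, and more importantly your diagnosis is correct: the lemma as printed is \emph{false} without a smallness hypothesis on $\rho$. For a concrete counterexample, take two blocks with $\bigl(|B_m\cap I|,\,|B_m\cap(J\setminus I)|\bigr)$ equal to $(0,11)$ and $(5,5)$; then $|I|=5$, $|J\setminus I|=16$, $\rho=55/80=11/16$, the unique maximal-cardinality block is the first one, and $|B\cap I|=0<(1-\rho)|I|=25/16$. So the clause ``maximal-cardinality'' does not by itself force $B$ to be the block that is rich in both $I$ and $J\setminus I$, and some hypothesis such as your $\rho<\tfrac12$ is genuinely needed (one can also check that for $\rho>\tfrac12$ even the weaker ``there exists a block $B$ with both properties'' can fail, e.g.\ blocks $(2,1)$ and $(1,2)$).

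Your reduction to the sign conditions $\sum_m(a_1-a_m)b_m\ge0$ and $\sum_m(b_1-b_m)a_m\ge0$, and the identity $\sum_m p_mq_m=1-\rho$, are exactly right, and the computation $p_{m_1}+q_{m_2}-2p_{m_1}q_{m_2}=\tfrac12-\tfrac12(2p_{m_1}-1)(2q_{m_2}-1)<\tfrac12$ cleanly rules out $m_1\ne m_2$. One small point to spell out at the end: from $p_{m^\ast}>\tfrac12$ and $q_{m^\ast}>\tfrac12$ you get \emph{strict} inequalities $a_{m^\ast}>a_m$ and $b_{m^\ast}>b_m$ for $m\ne m^\ast$, hence $B_{m^\ast}$ is the \emph{unique} maximal-cardinality block; this is what forces $B=B_{m^\ast}$ even though the lemma only designates $B$ as ``a'' maximal block (mere non-strict maximality of $|B_{m^\ast}|$ would not suffice if ties were possible). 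Finally, you are right that the extra hypothesis costs nothing where the lemma is used: on the event (\ref{ddd}) the relevant $\rho$ is $O(\delta)$, far below $\tfrac12$. It would be worth recording the corrected hypothesis explicitly, since as stated the lemma is not a true assertion.
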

We will apply the lemma with $I$ and $J$ being $\Xi^{\square(t_0)}_{\le t_1}$ and $\Xi^{\square(t_0)}_{\le t_2}$, so that 
$|I|$ and $|J|$ have Poisson distributions with means 
$e^{t_1-t_0}$ and $e^{t_2-t_0}$, and to the equivalence relation ``same time-$0$ ancestor". 
Choose $\delta > 0$ such that 
$(1 - \delta)(1 - \delta(1-\delta)^{-2}) < \eps$. 
On the event
\begin{equation}
|I| \ge (1 - \delta) e^{t_1-t_0} \mbox{ and }  |J| \ge (1-\delta) e^{t_2-t_0}  \mbox{ and }  M(t_0,t_1,t_2) \le \delta e^{t_1+t_2 - 2t_0}  
\label{ddd}
\end{equation}
Lemma \ref{L:IJf} implies that event $A(t_0, t_1,t_2,\eps)$ holds.  
The first two events in (\ref{ddd})  have probabilities $\to 1$ as $t_1, t_2 \to \infty$, and so by (\ref{Mttt}) and Markov's inequality
the limit $\rho(t_0,\eps)$ at (\ref{reps}) satisfies
\[ \rho(t_0,\eps) \ge 1 - \delta^{-1} K \exp(- \rho t_0) , \] 
establishing Lemma \ref{t00}.
\end{proof}

\paragraph{Proof of Proposition \ref{P:coupledist}.}
Take $0 < t_0 < t$.
By self-similarity, Proposition \ref{P:couplenew} remains true if time $0$ is replaced by an arbitrary time $t_0$: 
there exist real-valued marks 
$(m_{t_0, \xi}, \ \xi \in \Xi_{\le t_0})$ such that for all $\xi \in \Xi_{\le t_0}$
\begin{equation}
 | \mu_{t_0,t,\xi}| \to m_{t_0, \xi} \mbox{ in probability as } t \to \infty. 
 \label{eq:mutt}
 \end{equation}
For $\zeta \in \Xi_{\le 0}$ define $\nu_{0,t_0,\zeta}$ to be the measure that puts weight $ m_{t_0, \xi} $ 
on the position $z_\xi$ of each particle $\xi \in \descend(0,t_0,\zeta)$.
And define $\nu_{0,t_0,t,\zeta}$ to be the measure that puts weight $|  \mu_{t_0, t,\xi} |$ 
on the position $z_\xi$ of each particle $\xi \in \descend(0,t_0,\zeta)$.  
We will need to show that, for large $t_0$, the measures
 $\nu_{0,t_0,t,\zeta}$ and $\mu_{0,t,\zeta} $ are close.
 
 We exploit the dual bounded Lipschitz metric on $\MM(\Reals^2)$:
 \[ d(\nu, \nu^\prime) = \sup \left\{ \left| \int f d \nu - \int f d\nu^\prime \right|: \ ||f||_{BL} \le 1 \right\} \]
 \[ ||f||_{BL}  := \max \left( \sup_z |f(z)|, \ \sup_{z_1 \ne z_2} \sfrac{|f(z_2) - f(z_1)|}{ ||z_1 - z_2||} \right) .\]
 This metric has the property
 \begin{equation}
 d \left( c \sum_i \delta_{z_i}, c \sum_i \delta_{z^\prime_i} \right) \le c \sum_i ||z_i - z^\prime_i|| . 
 \label{eq:cd}
 \end{equation}
 Consider $0 < t_1 < t_2 < t$.
The relationship between $\nu_{0,t_2,t,\zeta}$ and $\nu_{0,t_1,t,\zeta}$ is that for  
each $\xi \in \descend(0,t,\zeta)$ the weight $e^{-t}$ moved from the position of 
$\ancestor(t_2,\xi)$ to the position of 
$\ancestor(t_1,\xi)$.
Taking spatial averages and using (\ref{eq:cd}) we find
\begin{eqnarray*}
\ave( d(\nu_{0,t_1,t,\zeta}, \nu_{0,t_2,t,\zeta}), \ \zeta \in \Xi_{\le 0} )  
&\le & e^{-t}  \ave( || z_{\ancestor(t_1,\xi)} - z_{\ancestor(t_2,\xi)} ||, \ \xi \in \Xi_{\le t}) 
\\
& \le  & 2 K e^{-t_1/2} \mbox{ by Corollary \ref{C:distz}}. 
\end{eqnarray*}
This and  (\ref{eq:mutt}) are sufficient to imply that the $\nu$'s have a limit: for all $\zeta \in \Xi_{\le 0}$
 \begin{equation}
   \nu_{0,t,\zeta} \to \mu_{0, \infty, \zeta} \mbox{(say),  in probability as } t \to \infty.
\label{eq:znm}
\end{equation}
 Now by (\ref{eq:mutt}) we can write the definition of $ \nu_{0,t_0,\zeta}$ as
\[  \nu_{0,t_0,\zeta} = \sum \{ (\lim_{u \to \infty} | \mu_{t_0,u,\xi}|) \delta_{z_\xi} \ : \ \xi \in \descend(0,t_0,\zeta) \}   \]
whereas (by definition) for $t > t_0$ 
\[ \mu_{0,t,\zeta}  = \sum \{  \mu_{t_0,t,\xi} \ : \ \xi \in \descend(0,t_0,\zeta) \}  . \]
So now we have
\begin{eqnarray*}
d(\mu_{0,t,\zeta} , \mu_{0, \infty, \zeta}) & \le & d(\nu_{0,t_0,\zeta} , \mu_{0, \infty, \zeta}) + d(\mu_{0,t,\zeta} , \nu_{0, t_0, \zeta}) \\
& \le & d(\nu_{0,t_0,\zeta} , \mu_{0, \infty, \zeta})  
+ \sum \{ d ( \lim_{u \to \infty} |\mu_{t_0,u,\xi}|  \ \delta_{z_\xi} \ , \ \mu_{t_0,t,\xi} ) :\ \xi \in \descend(0,t_0,\zeta) \} \\
& \le & d(\nu_{0,t_0,\zeta} , \mu_{0, \infty, \zeta})  
+ \sum \left\{ \left|   \lim_{u \to \infty} |\mu_{t_0,u,\xi}| \ - \ | \mu_{t_0,t,\xi}| \ \right| : \ \xi \in \descend(0,t_0,\zeta) \ \right\} \\
&& + \sum \{ d( \mu_{t_0,t,\xi} \ , \ |\mu_{t_0,t,\xi}| \delta_{z_\xi}) : \xi \in \descend(0,t_0,\zeta) \} .
\end{eqnarray*}
Taking the spatial average over $\zeta \in \Xi_{\le 0}$ of sums over all time-$t_0$ descendants of $\zeta$ is the same as 
taking the spatial average over all time-$t_0$ particles. 
So taking averages in the inequality above gives
\begin{equation}
\ave ( d(\mu_{0,t,\zeta} , \mu_{0, \infty, \zeta}) \ : \zeta \in \Xi_{\le 0} )   \le b_1(t_0) + b_2(t_0,t) + b_3(t_0,t) 
\label{dmtz}
\end{equation}
where
\begin{eqnarray*} 
b_1(t_0) &=& \ave ( d(\nu_{0,t_0,\zeta} , \mu_{0, \infty, \zeta})  \ : \ \zeta \in \Xi_{\le 0} ) 
\\
b_2(t_0,t) &=&
 \ave \left( \left|   \lim_{u \to \infty} |\mu_{t_0,u,\xi}| \ - \ | \mu_{t_0,t,\xi}| \ \right| \ : \ \xi \in \Xi_{\le t_0} \right)
\\
b_3(t_0,t) &=&
 \ave ( d( \mu_{t_0,t,\xi} \ , \ |\mu_{t_0,t,\xi}| \delta_{z_\xi}) \ : \ \xi \in \Xi_{\le t_0} ) .
\end{eqnarray*}
To prove Proposition \ref{P:coupledist} it is enough to prove
\begin{equation}
\mbox{$\ave ( d(\mu_{0,t,\zeta} , \mu_{0, \infty, \zeta}) : \zeta \in \Xi_{\le 0} ) \to 0$ as $t \to \infty$.}
\label{madm}
\end{equation}
We know $b_1(t_0) \to 0$ as $t_0 \to \infty$ by (\ref{eq:znm}). 
And $b_2(0,t) \to 0$ as $t \to \infty$ by Proposition \ref{P:couplenew},
and then by self-similarity $b_2(t_0,t) \to 0$ as $t \to \infty$ for all $t_0$.
Finally,
\begin{eqnarray*}
d( \mu_{t_0,t,\xi} \ , \ |\mu_{t_0,t,\xi}| \delta_{z_\xi}) 
& \le &
\int || z_\xi - z|| \ \mu_{t_0,t,\xi}(dz) \\
&=&  e^{-t} \sum \{ ||z_{\ancestor(-t_0, \chi)} - z_\chi || \ : \ \chi \in \descend(t_0,t,\xi) \}
\end{eqnarray*}
and so 
\begin{eqnarray*}
 b_3(t_0,t) &\le&
e^{-t} \ave ( ||z_{\ancestor(-t_0, \chi)} - z_\chi || \ : \ \chi \in \Xi_{\le t} ) \\
& \le  & K e^{-t_0/2} \mbox{ by Corollary \ref{C:distz}}. 
\end{eqnarray*}
Now taking limits in the inequality (\ref{dmtz}) establishes (\ref{madm}) and then Proposition \ref{P:coupledist}.

 \subsection{The random partition}
 We  will now show that a limit random measure $\mu_{0,\infty,\xi}$ in Proposition  \ref{P:coupledist}
 is in fact Lebesgue measure $\Lambda$ restricted to some random set.
 The fact that the $t \to \infty$ limit normalized empirical measure on $\ZZ_t$ is $\Lambda$ implies that 
 \[ \sum \{ \mu_{0,\infty, \zeta} \  : \zeta \in \Xi_{\le 0}  \} = \Lambda  \mbox{ a.s. } .\]
So the random measures $\mu_{0,\infty, \zeta}$ have random densities $f_\zeta(z), \ z \in \Reals^2$ 
 such that
 \[ \sum  \{ f_\zeta(z) \ : \zeta \in \Xi_{\le 0}  \} = 1 \ \forall z \mbox{ a.s. } \]
As $t \to \infty$ we have
 \[
 \Ex \{ \sum_{z_{\xi_1} \in \square} \sum_{z_{\xi_2} \in \square} e^{-2t} \indic_{ \{||z_{\xi_1} - z_{\xi_2}||   \le \delta\}} \ : \ 
 \xi_1 \in \Xi_{\le t},  \xi_2 \in \Xi_{\le t} \}
 \to \int_\square \int_\square  \indic_{ \{ ||z_1-z_2|| \le \delta\} } \ dz_1 dz_2  
 \]
 Now consider whether a pair $(\xi_1,\xi_2)$ have different time-$0$ ancestors; precisely, consider
 \begin{equation}
  \lim_{t \to \infty} 
 \Ex \{ \sum_{z_{\xi_1} \in \square} \sum_{z_{\xi_2} \in \square} e^{-2t} \indic_{ \{||z_{\xi_1} - z_{\xi_2}||   \le \delta\}}
 \ \indic_{ \{   \ancestor(0, \xi_1) \neq \ancestor(0,\xi_2) \} }     \ : \ 
 \xi_1 \in \Xi_{\le t},  \xi_2 \in \Xi_{\le t} \}
 \label{zzdiff}
 \end{equation}
 From the fact 
 $\mu_{0,t,\zeta} \to \mu_{0, \infty, \zeta}$
 the limit in (\ref{zzdiff})  equals
 \begin{equation}
   \Ex \  \int_\square \int_\square  \indic_{ \{ ||z_1-z_2|| \le \delta\} } \     \left(1 - \sum_{\zeta \in \Xi_{\le 0}   } f_\zeta(z_1) f_\zeta(z_2)    \right)              \ dz_1 dz_2  . 
   \label{eq39a}
   \end{equation}
 But consider the probability
 $p(0,t,t,z_1,z_2)$,  given that $\Xi_{\le t}$ has particles at $z_1$ and $z_2$, that
 they have  different time-$0$ ancestors.
 By Corollary \ref{C:3} with $\bar{s}$ defined by $\delta/\sqrt{2} = \exp(- \bar{s}/2)$,
 \[ \mbox{ if } ||z_2-z_1|| \le \delta \mbox{ then } p(0,t,t,z_1,z_2) \le K \delta^{2 \rho} \mbox{ for } t \ge \bar{s} . \]
 So the limit in (\ref{zzdiff}) also equals
 \begin{equation}
  \int_\square \int_\square  \indic_{ \{ ||z_1-z_2|| \le \delta\} } \ \lim_t p(0,t,t,z_1,z_2)  \ dz_1 dz_2 
 \le     K \delta^{2 \rho}   \int_\square \int_\square  \indic_{ \{ ||z_1-z_2|| \le \delta\} }   \ dz_1 dz_2  .
 \label{eq39b}
 \end{equation}
 For probability distributions $(a_i)$ and $(b_i)$ we have
 $1 - \sum_i a_ib_i \ge  1 - \max_i a_i$.
 Applying this to (\ref{eq39a}) and using inequality (\ref{eq39b}), 
 \[ \frac
 { \Ex \  \int_\square \int_\square  \indic_{ \{ ||z_1-z_2|| \le \delta\} } \     \left(1 - \max_{\zeta \in \Xi_{\le 0}   } f_\zeta(z_1)    \right)              \ dz_1 dz_2 }
 {  \int_\square \int_\square  \indic_{ \{ ||z_1-z_2|| \le \delta\} }  \ dz_1 dz_2 }
 \ \le \ K \delta^{2 \rho} . 
 \]
 Letting $\delta \downarrow 0$ we deduce that a.s.
 \[ \max_{\zeta \in \Xi_{\le 0}}   f_\zeta(z)   = 1 \mbox{ a.e. }. \]
 So defining
 \[ A(0, \zeta) = \{z: f_\zeta(z) = 1 \} \]
 and modifying on null sets, the random sets 
 $ \{ A(0,\zeta) : \zeta \in \Xi_{\le 0}   \} $
 form a partition of $\Reals^2$, and 
$\mu_{0,\infty,\zeta}$ is Lebesgue measure restricted to $A(0,\zeta)$.
So writing $\Lambda_A$ for Lebesgue measure restricted to $A$, we can rewrite Proposition \ref{P:coupledist} as follows, 
using self-similarity to extend from the time-$0$ case to the general time $t$ case.
\begin{Proposition}
\label{P:partition}
For each $- \infty < t < \infty$ there exists
a random partition 
$\{ A(t,\zeta) : \zeta \in \Xi_{\le t} \}$ of $\Reals^2$ into measurable sets such that 
for all $\zeta \in \Xi_{\le t}$
\[  \mu_{t,u,\zeta} \to \Lambda_{A(t,\zeta)} \mbox{ in probability as } u \to \infty.
\]
\end{Proposition}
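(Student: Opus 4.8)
The plan is to build on Proposition \ref{P:coupledist}, which already supplies the limiting marks $\mu_{0,\infty,\zeta}$, and to upgrade each such limit from an arbitrary finite measure to the restriction of Lebesgue measure $\Lambda$ to a random set. First I would record that the normalized empirical measure placing weight $e^{-t}$ at each point of $\ZZ_{\le t}$ converges to $\Lambda$ as $t\to\infty$ (the standard law of large numbers for a rate-$e^t$ PPP), and that the masses $\mu_{0,t,\zeta}$ decompose this empirical measure according to the time-$0$ ancestor. Passing to the limit gives $\sum_{\zeta\in\Xi_{\le 0}}\mu_{0,\infty,\zeta}=\Lambda$ almost surely, so each $\mu_{0,\infty,\zeta}$ is absolutely continuous with a density $f_\zeta$ satisfying $\sum_{\zeta}f_\zeta(z)=1$ for a.e.\ $z$, almost surely.

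The crux is to show these densities are $\{0,1\}$-valued, equivalently that $\max_\zeta f_\zeta(z)=1$ for a.e.\ $z$. For this I would run a pair-counting (second-moment) argument at small scales. Fix $\delta>0$ and count ordered pairs of particles $(\xi_1,\xi_2)$ in the unit square $\square$, both present at time $t$ and within distance $\delta$; after normalizing by $e^{-2t}$ the expected count tends to $\int_\square\int_\square\indic_{\{\|z_1-z_2\|\le\delta\}}\,dz_1\,dz_2$. Restricting to pairs with \emph{distinct} time-$0$ ancestors and using $\mu_{0,t,\zeta}\to\mu_{0,\infty,\zeta}$, the same normalized count tends to $\Ex\int_\square\int_\square\indic_{\{\|z_1-z_2\|\le\delta\}}\bigl(1-\sum_\zeta f_\zeta(z_1)f_\zeta(z_2)\bigr)\,dz_1\,dz_2$. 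On the other hand, Corollary \ref{C:3} (with $\bar s$ chosen so that $\delta/\sqrt2=e^{-\bar s/2}$) bounds the probability that two particles within distance $\delta$ have different time-$0$ ancestors by $K\delta^{2\rho}$, so this same limit is at most $K\delta^{2\rho}\int_\square\int_\square\indic_{\{\|z_1-z_2\|\le\delta\}}\,dz_1\,dz_2$. Combining the two evaluations and invoking the elementary inequality $1-\sum_i a_ib_i\ge 1-\max_i a_i$ for probability vectors yields $\Ex\int_\square\int_\square\indic_{\{\|z_1-z_2\|\le\delta\}}\bigl(1-\max_\zeta f_\zeta(z_1)\bigr)\,dz_1\,dz_2\le K\delta^{2\rho}\int_\square\int_\square\indic_{\{\|z_1-z_2\|\le\delta\}}\,dz_1\,dz_2$. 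Dividing by the positive pair integral and letting $\delta\downarrow 0$ forces $\max_\zeta f_\zeta(z)=1$ a.e.

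Once this holds, the conclusion is immediate: together with $\sum_\zeta f_\zeta=1$ it forces each $f_\zeta\in\{0,1\}$ a.e., so setting $A(0,\zeta)=\{z:f_\zeta(z)=1\}$ (modifying on a null set) produces a partition of $\Reals^2$ with $\mu_{0,\infty,\zeta}=\Lambda_{A(0,\zeta)}$; the self-similarity of $\bm{\Xi}$ under $z\mapsto e^{-t/2}z$ then transfers the statement from time $0$ to an arbitrary time $t$, giving $\mu_{t,u,\zeta}\to\Lambda_{A(t,\zeta)}$ in probability. I expect the main obstacle to be the interchange of limits in the pair-counting step: one must justify that the $t\to\infty$ limit of the normalized expected count of distinct-ancestor nearby pairs really equals the density expression $\Ex\int_\square\int_\square\indic_{\{\|z_1-z_2\|\le\delta\}}(1-\sum_\zeta f_\zeta f_\zeta)$. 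This relies on the in-probability convergence of Proposition \ref{P:coupledist} together with the uniform-integrability control of Corollary \ref{C:tL2} to promote that convergence to convergence of the relevant second moments.
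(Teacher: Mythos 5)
Your proposal is correct and follows essentially the same route as the paper's proof: the decomposition $\sum_\zeta \mu_{0,\infty,\zeta}=\Lambda$ yielding densities $f_\zeta$, the second-moment pair count at scale $\delta$ compared against the Corollary \ref{C:3} bound $K\delta^{2\rho}$, the inequality $1-\sum_i a_ib_i\ge 1-\max_i a_i$, and self-similarity to pass from time $0$ to general $t$. Your closing remark about justifying the interchange of limits via Corollary \ref{C:tL2} is a point the paper passes over silently, so flagging it is a small improvement rather than a divergence.
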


\subsection{Completing the proofs}
Proposition \ref{P:partition} is essentially enough to prove Theorems \ref{T:limit} and \ref{T:process}.
As noted in the introduction, for fixed $t$ we can regard 
\[ \bZ^{(t)} = 
\{ (z_\xi, A(t,\xi)): \ \xi \in  \Xi_{\le t}  \}
\]
as a marked point process.
The fact that the evolution of the coloring process after time $t$, given $\ZZ_t$, does not depend on the arrival times of the particles in $\Xi_{\le t}$, 
means that $ \bZ^{(t)} $ is measurable with respect to the time-reversed filtration 
$\stackrel{\leftarrow} {\FF_{t}}$ at (\ref{f-back}).  
The statement in Theorem \ref{T:process} was that the process $ \bZ^{(t)} $
evolves in reversed time according to the rule:
\begin{quote}
during $[t, t - dt]$, for each $\xi \in \Xi_{\le t}$  delete $\xi$ 
(that is, remove the entry  $(z_\xi, A(t, \xi))$ )
 with probability $dt$; 
for each deleted particle $\xi$, let $\zeta$ be the nearest other particle,
and set  $A(t - dt, \zeta) = A(t, \zeta) \cup A(t,\xi)$.
\end{quote}
To see how this arises, fix large $T$ and for $- \infty < t \le T$ consider 
$\{ (z_\xi, \mu_{t,T,\xi}): \ \xi \in  \Xi_{\le t}  \}$ 
as a marked point process.  From  Lemma \ref{L:rev} (the thinning property of the PPP), in reversed time $t$ 
this evolves precisely as a ``coalescing measures process": 
\begin{quote}
during $[t, t - dt]$, for each $\xi \in \Xi_{\le t}$  delete $\xi$ 
(that is, remove the entry  $(z_\xi,  \mu_{t,T,\xi}) $)
 with probability $dt$; 
for each deleted particle $\xi$, let $\zeta$ be the nearest other particle,
and set  $ \mu_{t-dt,T,\zeta } =   \mu_{t,T,\zeta } +  \mu_{t,T,\xi }$.
\end{quote}
Taking the $T \to \infty$ limit given in Proposition \ref{P:partition}, we obtain the  former rule for the dynamics of $ \bZ^{(t)} $.
 
The other assertions of Theorem  \ref{T:process} hold by translation-invariance and  self-similarity of the 
underlying space-time PPP $\bm{\Xi}$.

\newpage
\section{Discussion}
\subsection{In what sense is this a tree process?}
\label{sec:tree_sense}
We have  used the  language of ancestors and descendants,  but otherwise have not really exploited the implicit tree structure of the colored point process construction.
If we draw the process as a random tree  in the plane, with edges drawn as line segments, it is clear from Figure \ref{fig_1} that edges 
sometimes cross, so we do not get a ``tree" in the usual sense.  
This suggests that, in the opening ``$k$ colors in the unit square" model, in the limit partition into $k$ colored regions,  
these regions are not necessarily connected. 
Figure \ref{Fig:notconn}  illustrates how this could happen. 
Simulations strongly indicate  that in fact a typical region is not connected but that only a very small proportion of its area is outside its largest connected component.

\begin{figure}[h!]
\caption{A possible realization of the tree in the unit square on the first $n = 11$ arriving points.  The edge from $1$ to $2$ is omitted, to show the
$k  = 2$ subtrees associated with the first two vertices.  At this stage  
the unit square is Voronoi-partitioned into 2 components according to whether 
the nearest vertex is $\circ$ or $\bullet$, and  the $\circ$  component is not connected.  We expect this disconnection to persist in the $n \to \infty$ limit.
}
\label{Fig:notconn}
\setlength{\unitlength}{0.2in}
\begin{picture}(6,11)(-9,-1.5)
\multiput(0,0)(1,1){7}{\circle*{0.3}}
\put(2,5){\circle{0.3}}
\put(5,2){\circle{0.3}}
\put(1,5){\circle{0.3}}
\put(6,2){\circle{0.3}}
\put(0,0){\line(1,1){6}}
\put(2,5){\line(1,-1){3}}
\put(2,5){\line(-1,0){1}}
\put(5,2){\line(1,0){1}}
\put(0.15,-0.4){2}
\put(1.15,0.6){4}
\put(2.15,1.6){5}
\put(3.15,2.6){7}
\put(4.15,3.6){9}
\put(5.15,4.6){10}
\put(6.15,5.6){11}
\put(1.8,4.2){1}
\put(0.8,4.2){6}
\put(4.8,1.2){3}
\put(5.8,1.2){8}
\put(-2,-1){\line(1,0){10}}
\put(-2,-1){\line(0,1){9}}
\put(-2,8){\line(1,0){10}}
\put(8,-1){\line(0,1){9}}
\end{picture}
\end{figure}
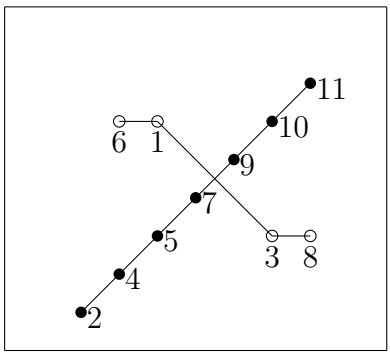

\newpage

 \subsection{Other models of coalescing partitions}
 \label{sec:other_coal}
 There has been very little study of partition-valued processes in the plane which evolve by merging of adjacent components.
 One such process can be obtained by thinning a Poisson {\em line} process, but we are thinking of pairwise mergers. 
A well-studied implicit example is provided by bond percolation.
As illustrated in Figure \ref{fig:bond}, 
to a percolation cluster of ``open" edges  (A) one can associate (this is {\em planar duality}) the region consisting of the union of the unit squares centered at the the vertices in the cluster (B), 
and then delete the open edges (C) and vertices to obtain a partition of the plane (D).  
The length of the boundary between two adjacent regions in this partition equals the number of ``closed"  edges between the original percolation clusters. 
So in the bond percolation model where edges become open at Exponential(1) times, 
the associated partition-valued process is such that the merger rate of adjacent regions equals the length of their common boundary. 
For this model classical percolation theory \cite{grimmett} implies that infinite regions appear after time $\log 2$.
 More general models in which two adjacent components merge into one at some stochastic rate determined by their geometry
are discussed in \cite{me-empires}, where it is conjectured that if large components do not grow too quickly (relative to small components) then there should be some self-similar
asymptotics, but no such result is proved.  
The coalescing partitions process in this paper is perhaps the only known self-similar Markovian process of pairwise merging partitions of $\Reals^2$.
In one dimension, the thinning process of Poisson points defines a self-similar process of merging adjacent intervals, which has an interpretation as intermediate-size 
asymptotics in the Kingman coalescent (\cite{me-coal} section 3.1).

\newpage

\setlength{\unitlength}{0.125in}

\begin{figure}[h!]
\caption{Bond percolation clusters as a partition of $\Reals^2$.}
\label{fig:bond}
\begin{picture}(12,12)
\thinlines
\multiput(0,0)(2,0){7}{\circle*{0.3}}
\multiput(0,2)(2,0){7}{\circle*{0.3}}
\multiput(0,4)(2,0){7}{\circle*{0.3}}
\multiput(0,6)(2,0){7}{\circle*{0.3}}
\multiput(0,8)(2,0){7}{\circle*{0.3}}
\multiput(0,10)(2,0){7}{\circle*{0.3}}
\put(2,0){\line(1,0){4}}
\put(0,2){\line(1,0){2}}
\put(0,4){\line(1,0){4}}
\put(6,4){\line(1,0){2}}
\put(0,6){\line(1,0){2}}
\put(4,6){\line(1,0){4}}
\put(10,6){\line(1,0){2}}
\put(2,8){\line(1,0){2}}
\put(8,8){\line(1,0){2}}
\put(4,10){\line(1,0){4}}
\put(0,2){\line(0,1){2}}
\put(0,8){\line(0,1){2}}
\put(2,0){\line(0,1){4}}
\put(4,6){\line(0,1){2}}
\put(6,2){\line(0,1){4}}
\put(6,8){\line(0,1){2}}
\put(8,0){\line(0,1){2}}
\put(8,4){\line(0,1){4}}
\put(10,0){\line(0,1){2}}
\put(12,0){\line(0,1){2}}
\put(12,4){\line(0,1){4}}
\put(-3,5){A}
\end{picture}

\begin{picture}(12,16)
\thicklines
\multiput(0,0)(2,0){7}{\circle*{0.3}}
\multiput(0,2)(2,0){7}{\circle*{0.3}}
\multiput(0,4)(2,0){7}{\circle*{0.3}}
\multiput(0,6)(2,0){7}{\circle*{0.3}}
\multiput(0,8)(2,0){7}{\circle*{0.3}}
\multiput(0,10)(2,0){7}{\circle*{0.3}}
\put(-0.5,1){\line(1,0){1.5}}
\put(3,1){\line(1,0){4}}
\put(3,3){\line(1,0){2}}
\put(7,3){\line(1,0){5.5}}
\put(-0.5,5){\line(1,0){5.5}}
\put(9,5){\line(1,0){2}}
\put(-0.5,7){\line(1,0){3.5}}
\put(5,7){\line(1,0){2}}
\put(9,7){\line(1,0){2}}
\put(1,9){\line(1,0){4}}
\put(7,9){\line(1,0){5.5}}
\put(1,-0.5){\line(0,1){1.5}}
\put(1,7){\line(0,1){3.5}}
\put(3,1){\line(0,1){2}}
\put(3,5){\line(0,1){2}}
\put(3,9){\line(0,1){1.5}}
\put(5,1){\line(0,1){4}}
\put(5,7){\line(0,1){2}}
\put(7,-0.5){\line(0,1){3.5}}
\put(7,7){\line(0,1){2}}
\put(9,-0.5){\line(0,1){7.5}}
\put(9,9){\line(0,1){1.5}}
\put(11,-0.5){\line(0,1){5.5}}
\put(11,7){\line(0,1){3.5}}
\put(-3,5){C}
\end{picture}

\begin{picture}(12,12)(-18,-24)
\thicklines
\multiput(0,0)(2,0){7}{\circle*{0.3}}
\multiput(0,2)(2,0){7}{\circle*{0.3}}
\multiput(0,4)(2,0){7}{\circle*{0.3}}
\multiput(0,6)(2,0){7}{\circle*{0.3}}
\multiput(0,8)(2,0){7}{\circle*{0.3}}
\multiput(0,10)(2,0){7}{\circle*{0.3}}
\put(-0.5,1){\line(1,0){1.5}}
\put(3,1){\line(1,0){4}}
\put(3,3){\line(1,0){2}}
\put(7,3){\line(1,0){5.5}}
\put(-0.5,5){\line(1,0){5.5}}
\put(9,5){\line(1,0){2}}
\put(-0.5,7){\line(1,0){3.5}}
\put(5,7){\line(1,0){2}}
\put(9,7){\line(1,0){2}}
\put(1,9){\line(1,0){4}}
\put(7,9){\line(1,0){5.5}}
\put(1,-0.5){\line(0,1){1.5}}
\put(1,7){\line(0,1){3.5}}
\put(3,1){\line(0,1){2}}
\put(3,5){\line(0,1){2}}
\put(3,9){\line(0,1){1.5}}
\put(5,1){\line(0,1){4}}
\put(5,7){\line(0,1){2}}
\put(7,-0.5){\line(0,1){3.5}}
\put(7,7){\line(0,1){2}}
\put(9,-0.5){\line(0,1){7.5}}
\put(9,9){\line(0,1){1.5}}
\put(11,-0.5){\line(0,1){5.5}}
\put(11,7){\line(0,1){3.5}}
\thinlines
\multiput(0,0)(2,0){7}{\circle*{0.3}}
\multiput(0,2)(2,0){7}{\circle*{0.3}}
\multiput(0,4)(2,0){7}{\circle*{0.3}}
\multiput(0,6)(2,0){7}{\circle*{0.3}}
\multiput(0,8)(2,0){7}{\circle*{0.3}}
\multiput(0,10)(2,0){7}{\circle*{0.3}}
\put(2,0){\line(1,0){4}}
\put(0,2){\line(1,0){2}}
\put(0,4){\line(1,0){4}}
\put(6,4){\line(1,0){2}}
\put(0,6){\line(1,0){2}}
\put(4,6){\line(1,0){4}}
\put(10,6){\line(1,0){2}}
\put(2,8){\line(1,0){2}}
\put(8,8){\line(1,0){2}}
\put(4,10){\line(1,0){4}}
\put(0,2){\line(0,1){2}}
\put(0,8){\line(0,1){2}}
\put(2,0){\line(0,1){4}}
\put(4,6){\line(0,1){2}}
\put(6,2){\line(0,1){4}}
\put(6,8){\line(0,1){2}}
\put(8,0){\line(0,1){2}}
\put(8,4){\line(0,1){4}}
\put(10,0){\line(0,1){2}}
\put(12,0){\line(0,1){2}}
\put(12,4){\line(0,1){4}}
\put(14,5){B}
\end{picture}

\begin{picture}(12,16)(-18,-24)
\thicklines
\put(-0.5,1){\line(1,0){1.5}}
\put(3,1){\line(1,0){4}}
\put(3,3){\line(1,0){2}}
\put(7,3){\line(1,0){5.5}}
\put(-0.5,5){\line(1,0){5.5}}
\put(9,5){\line(1,0){2}}
\put(-0.5,7){\line(1,0){3.5}}
\put(5,7){\line(1,0){2}}
\put(9,7){\line(1,0){2}}
\put(1,9){\line(1,0){4}}
\put(7,9){\line(1,0){5.5}}
\put(1,-0.5){\line(0,1){1.5}}
\put(1,7){\line(0,1){3.5}}
\put(3,1){\line(0,1){2}}
\put(3,5){\line(0,1){2}}
\put(3,9){\line(0,1){1.5}}
\put(5,1){\line(0,1){4}}
\put(5,7){\line(0,1){2}}
\put(7,-0.5){\line(0,1){3.5}}
\put(7,7){\line(0,1){2}}
\put(9,-0.5){\line(0,1){7.5}}
\put(9,9){\line(0,1){1.5}}
\put(11,-0.5){\line(0,1){5.5}}
\put(11,7){\line(0,1){3.5}}
\put(13,5){D}
\end{picture}
\end{figure}

\vspace*{-2.5in}

\subsection{Heuristic arguments}
\label{sec:2heuristics} 
Arguments in this section \ref{sec:2heuristics} are heuristics, only parts of which seem easily formalized.
We conjectured at (\ref{gamma:conj}) that the tail behavior of the ``meeting distance" random variable $|| \Zcouple ||$ 
is of the form 
\begin{equation}
\Pr( || \Zcouple || > r) \asymp r^{- \gamma} \mbox{ as } r \to \infty
\label{def:exponent}
\end{equation}
for some exponent $\gamma$.
This is heuristically related to the issue of fractal dimension of the boundaries of the 
regions $(A(0,\xi), \ \xi  \in \Xi_{\le 0})$, as follows.
Consider the  boundaries within the unit square.
Saying this has fractal dimension $d$ is saying that for small $x > 0$ we need order $x^{-d}$ radius-$x$ discs to cover these boundaries.  
Consider a uniform random point $z_1$ in the square and another random point $z_2$ uniform on $\disc(z_1,x)$.
The chance that $z_1$ and $z_2$  are in different components is the same order as the chance they are in the same covering disc, 
which chance is order $x^{2-d}$.  
But the former chance is the same order as the chance that the meeting distance $M_x$ between their lines of descent is at least  $1$, that is $\Pr(M_x > 1)$.  
So we expect $\Pr(M_x > 1) \asymp x^{2-d}$ as $x \downarrow 0$.
Now by self-similarity $M_x =_d x M_1$.  So setting $r = 1/x$
\begin{equation}
 \Pr(M_1 > r) \asymp r^{-(2-d)} 
 \label{def:exponent2}
\end{equation}
and we heuristically identify the fractal dimension as
\[
d = 2 - \gamma .
\]

 \subsubsection{Heuristic 1: the boundary has fractal dimension $1$}
\label{sec:istoofractal}

For large $t_0$ consider the Voronoi regions associated with the different sets of particles $\descend(0,t_0,\zeta)$ as $\zeta$ varies.
The particles in $\Xi_{\le t_0}$ are separated by distance of order $e^{-t_0/2}$.
Consider three points $(z_1, z_2, z_3)$ on the boundary at time $t_0$ at distances of (say) $5e^{-t_0/2}$ apart. 
As $t$ increases, particles arriving nearby move the boundary near these three points. 
The first such move 
is over a distance of order $e^{-t_0/2}$ and subsequent moves decrease geometrically.  
So in the $t \to \infty$ limit  the positions of the boundaries near $(z_1, z_2, z_3)$ should become 
$(z_1+ D_1 e^{-t_0/2}, z_2 + D_2 e^{-t_0/2} , z_3 + D_3 e^{-t_0/2})$ 
for some  $(D_1, D_2, D_3)$ with a non-vanishing limit as $t_0 \to \infty$.
But this is saying that in the limit partition $(A(0,\zeta), \zeta \in \Xi_{\le 0})$,
on every scale $\sigma = || y_1 - y_3||$, 
for $y_1$ and $y_3$ on the boundary, the distance from the midpoint $(y_1 + y_3)/2$ to the boundary is of the form $D \sigma$ for 
some random $D>0$.
This is a hallmark of ``fractal dimension $=1$".

\subsubsection{Heuristic 2: the boundary has fractal dimension $1$}
\label{sec:notfractal}
The genealogical tree defines a ``line of descent" for each particle in $\bm{\Xi}$; these particle positions are dense in $\Reals^2$, so
let us suppose that in the continuum limit there is such a ``line of descent" from almost all points $z$ of $\Reals^2$ to infinity.  
Draw the tree via line segments in $\Reals^2$.
Consider a point $(x,0)$ on the $x$-axis.
The route from there to infinity first crosses the $y = 1$ line at some random point $(c(x), 1)$.
Consider the random set $\CC$ of all such values $c(x)$ as $x$ varies.  
This is stationary (translation-invariant)  and so has some intensity $\gamma$, which cannot be zero;
moreover we expect  $\gamma < \infty$ because  the intensity of line segments of length $> a$ is finite for each $a > 0$.
 Then suppose that for each $c \in \CC$, the sets of originating points 
 $\{x: c(x) = c\}$ form some\footnote{The argument is
  unchanged if instead it is a union of a finite-mean number of intervals.}  interval $(x_-(c), x_+(c))$.

 Next consider, for $z > 0$, the random quantity defined as
 \begin{quote}
 the infimum of $y>0$ such that  the routes from $(x_1,0)$ to infinity and from $(x_1+z,0)$ to infinity 
 first hit the line $\{(x,y): - \infty < x < \infty \}$
 at the same point.
 \end{quote}
 This has a distribution, say $D_z$, which does not depend on $x_1$.
 By considering endpoints of the intervals $(x_-(c), x_+(c))$ we have 
 \[ \gamma = \lim_{\delta \downarrow 0} \Pr(D_\delta > 1)/\delta . \]
 But by scale-invariance we have $D_\delta =_d \delta D_1$, and so 
 \[ \Pr(D_1 > d) \sim \gamma/d \mbox{ as } d \to \infty . \] 
 But $D_1$ should have the same tail behavior as $M_1$ at (\ref{def:exponent2}). 
 This suggests the scaling exponent is $\gamma = 1$ and hence the fractal dimension of the boundaries $= 1$.

\subsubsection{Heuristic 3: the boundary has fractal dimension $\neq 1$}

The heuristics at (\ref{def:exponent},  \ref{def:exponent2}) are essentially saying that the fractal dimension $d$ is determined via the limit
\[ \lim_{r \to \infty} \frac{  \Pr( || \Zcouple || > 2r)  }{  \Pr( || \Zcouple || > r)  } = 2^{d - 2}  . \]
But the limit is determined by the asymptotics of the coupled EA process, conditioned on  not coalescing for a long time.
As we saw in in section \ref{sec:coupled} the dynamics of the coupled EA process involve the complicated geometry of excluded regions, and there seems
 no reason why that limit should turn out to be exactly $1/2$.

 \subsubsection{Regarding Conjecture \ref{C:1} }
One might imagine that Conjecture \ref{C:1} would follow easily from Proposition \ref{Pcouple} via some general result of the form
\begin{quote}
If $\{A, A^c\}$ is  a partition of the unit square such that $\rho(r) \to 0 $ as $r \to 0$,
where $\rho(r)$ is the probability that two random points at distance $r$ apart are in the same subset,
then (after modifying $A$ on a set of measure zero) the topological boundary of $A$ has measure zero.
 \end{quote}
 But this assertion is not true in general, by considering an example of the form 
 $A = \cup_i \disc(z_i,r_i)$ for dense $(z_i)$ and $r_i \downarrow 0$ very fast.
 Proving Conjecture \ref{C:1} seems to require some new argument.


\begin{thebibliography}{10}

\bibitem{me-empires}
D.~J. Aldous, J.~R. Ong, and W.~Zhou.
\newblock Empires and percolation: stochastic merging of adjacent regions.
\newblock {\em J. Phys. A}, 43(2):025001, 10, 2010.

\bibitem{MR3164768}
David Aldous.
\newblock Scale-invariant random spatial networks.
\newblock {\em Electron. J. Probab.}, 19:no. 15, 41, 2014.

\bibitem{MR3082274}
David Aldous and Karthik Ganesan.
\newblock True scale-invariant random spatial networks.
\newblock {\em Proc. Natl. Acad. Sci. USA}, 110(22):8782--8785, 2013.

\bibitem{me-coal}
David~J. Aldous.
\newblock Deterministic and stochastic models for coalescence (aggregation and
  coagulation): a review of the mean-field theory for probabilists.
\newblock {\em Bernoulli}, 5(1):3--48, 1999.

\bibitem{asmussen}
S{\o}ren Asmussen.
\newblock {\em Applied probability and queues}, volume~51 of {\em Applications
  of Mathematics (New York)}.
\newblock Springer-Verlag, New York, second edition, 2003.
\newblock Stochastic Modelling and Applied Probability.

\bibitem{bertoin}
Jean Bertoin.
\newblock {\em Random fragmentation and coagulation processes}, volume 102 of
  {\em Cambridge Studies in Advanced Mathematics}.
\newblock Cambridge University Press, Cambridge, 2006.

\bibitem{kendall_book}
Sung~Nok Chiu, Dietrich Stoyan, Wilfrid~S. Kendall, and Joseph Mecke.
\newblock {\em Stochastic geometry and its applications}.
\newblock Wiley Series in Probability and Statistics. John Wiley \& Sons, Ltd.,
  Chichester, third edition, 2013.

\bibitem{cowan}
Richard Cowan.
\newblock New classes of random tessellations arising from iterative division
  of cells.
\newblock {\em Adv. in Appl. Probab.}, 42(1):26--47, 2010.

\bibitem{schreiber}
Hans-Otto Georgii, Tomasz Schreiber, and Christoph Th{\"a}le.
\newblock Branching random tessellations with interaction: a thermodynamic
  view.
\newblock {\em Ann. Probab.}, 43(4):1892--1943, 2015.

\bibitem{grimmett}
Geoffrey Grimmett.
\newblock {\em Percolation}.
\newblock Springer-Verlag, New York, 1989.

\bibitem{jordan-wade}
Jonathan Jordan and Andrew~R. Wade.
\newblock Evolution of random territories via on-line nearest-neighbours.
\newblock Unpublished notes, 2012.

\bibitem{MR872858}
J.~E. Littlewood.
\newblock {\em Littlewood's miscellany}.
\newblock Cambridge University Press, Cambridge, 1986.
\newblock Edited and with a foreword by B{\'e}la Bollob{\'a}s.

\bibitem{penrose-wade}
Mathew~D. Penrose and Andrew~R. Wade.
\newblock Random directed and on-line networks.
\newblock In {\em New perspectives in stochastic geometry}, pages 248--274.
  Oxford Univ. Press, Oxford, 2010.

\bibitem{thale}
Tomasz Schreiber and Christoph Th\"{a}le.
\newblock Shape-driven nested markov tessellations.
\newblock {\em Stochastics}, 85(3):510--531, 2013.

\end{thebibliography}

\end{document}